\documentclass{amsart}

\usepackage{amsmath, amssymb, amsthm, amscd, amsfonts, dsfont}
\usepackage{bbold}
\usepackage{hyperref}
\usepackage{macros}

\numberwithin{equation}{section}

\standardrenews

\ignore{
\newcommand{\ignore}[1]{}
\DeclareMathOperator{\id}{id}

\newcommand{\N}{\mathbb N}
\renewcommand{\P}{\mathbb P}

\newcommand{\ww}{\mathbf w}
\newcommand{\xx}{\mathbf x}

\renewcommand{\AA}{\mathcal A}
\newcommand{\LL}{\mathcal L}
\newcommand{\PP}{\mathcal P}

\newcommand{\MM}{\mathcal M}

\newcommand{\bfbeta}{{\boldsymbol\beta}}
\newcommand{\bfgamma}{{\boldsymbol\gamma}}
\newcommand{\FF}{\mathcal F}
\newcommand{\TT}{\mathcal T}
\newcommand{\butnot}{\setminus}

\renewcommand{\text}{\textup}
\newcommand{\given}{\upharpoonleft}
\newcommand{\w}{\widetilde}

\newif\ifdraft\drafttrue

\newcommand{\authordavid}{\ifdraft{\author{DS}}\else{
\author{David Simmons}
\address{434 Hanover Ln, Irving, TX 75062, USA}
\email{david9550\at gmail.com}
\urladdr{\url{https://sites.google.com/view/davidsimmonsmath2/}}
}\fi}

\newtheorem{theorem}{Theorem}[section]
\numberwithin{theorem}{section}
\newtheorem{lemma}[theorem]{Lemma}
\newtheorem{proposition}[theorem]{Proposition}
\newtheorem{corollary}[theorem]{Corollary}

\theoremstyle{definition}

\newtheorem{definition}[theorem]{Definition}
\newtheorem{observation}[theorem]{Observation}
\newtheorem{example}[theorem]{Example}
\newtheorem{remark}[theorem]{Remark}

}% end ignore

\newcommand{\LQ}{\text{``}}
\newcommand{\RQ}{\text{''}}

\DeclareMathOperator{\eval}{eval}
\newcommand{\NMID}{\mathbf{NMID}}
\newcommand{\ID}{\mathbf{ID}}
\newcommand{\GDST}{\mathbf{GDST}}
\newcommand{\ST}{\mathbf{ST}}
\newcommand{\KP}{\mathbf{KP}}

\draftfalse

\begin{document}

\title{Gradualist descriptionalist set theory}

\begin{abstract}
We introduce a formal language $\GDST$ (gradualist descriptionalist set theory)  with a family of interpretations indexed by ordinals, as well as a sublanguage $\NMID$ (the language of not necessarily monotonic inductive definitions), and show that the assertion that all propositions in $\NMID$ have well-defined truth values is equivalent to the existence for each $k\in\N$ of a sequence of ordinals $\eta_0 < \ldots < \eta_k$ such that for each $i < k$, $\eta_i$ is \emph{$\eta_{i+1}$-reflecting}, a notion we introduce which implies being $\Pi_n$-reflecting for all $n\in\N$ (and in particular being admissible and recursively Mahlo).
\end{abstract}
\authordavid

\maketitle

\section{Introduction}

%[Cf. $\ID$ vs $\NMID$ and vs the version of $\NMID$ where you don't require $T(w) \supseteq w$\ddr]

The following questions are closely linked:
\begin{itemize}
\item What mathematical objects exist?
\item What mathematical questions have definite answers?
\end{itemize}
In \cite[Section 4]{Simmons11}, the author proposed a possible answer to these questions in the form of an interpreted formal language $\GDST$, ``gradualist descriptionalist set theory'', the motivating intuitions of which are that
\begin{itemize}
\item Gradualism: The universe of sets cannot be viewed as a totality at any given point in time; rather, it is always growing. Cf. \cite[p.503]{Maddy1} and the references therein.
\item Descriptionalism: Every mathematical object should be describable in principle, where by ``describable'' we mean something like ``assigned a natural number (or equivalently a string in a countable language) that uniquely identifies it''. Descriptionalism is somewhat similar to predicativism (cf. e.g. \cite{Weaver_predicativism} for an overview), and also to Nik Weaver's ``constructive countablism'' \cite{Weaver_constructive_countablism}.
%and predicativism \cite{BFPS}, with the difference that descriptionalism takes the sequence of ordinals at face value as long as they can be uniquely assigned natural numbers. [Descriptionalism vs predicativism: descriptionalism takes the ordinals at face value as long as they can be described, whereas predicativism only takes the natural numbers at face value / predicativism seems an inconsistent framework given that it is bounded by a specific ordinal $\Gamma_0$; those who take predicativism beyond $\Gamma_0$ still do not take ordinals at face value -- another way of putting it is I am inspired by predicativism, but don't want to claim to be doing the same thing necessarily\internal]
\end{itemize}
See Section \ref{sectiondef} for a precise definition of $\GDST$. The semantics of $\GDST$ can be taken in two ways: either ``at face value'', referring to a gradualist descriptionalist view of the constructible universe $L$, or ``embedded in a metatheory''. In this note we will utilize both points of view, but focus on the second, using Kripke--Platek set theory $\KP$ as our metatheory.

In Section \ref{sectiondef} we introduce the language $\GDST$. In Section \ref{sectionaxiomatization} we introduce an axiomatization of $\GDST$ and compare it to the axiomatization $\KP$ of $\ST$. In Section \ref{sectioncanonical} we deal with the question of whether there is a canonical ordinal at which to halt the constructive hierarchy in the context of $\GDST$. In Section \ref{sectionreflecting} we introduce the notion of reflecting ordinals and compare them with other large ordinal properties; in particular, we show that reflecting ordinals are admissible. In Section \ref{sectionNMID} we introduce a sub-language of $\GDST$, the language of not necessarily monotonic inductive definitions $\NMID$, and show that the existence of $k$-fold reflecting ordinals for all $k\in\N$ implies that propositions in the language $\NMID$ have well-defined truth values. In Section \ref{sectionreverse} we prove the converse of this assertion, thus justifying the existence of $k$-fold reflecting ordinals as being in a sense predicative.

\section{Definition of $\GDST$}
\label{sectiondef}

Throughout this paper we use Kripke--Platek set theory $\KP$ with $\Delta_0$ induction (cf. Definition \ref{definitionKP} and Remark \ref{remarkKP}) as our metatheory. We use the notation $(L_\alpha)$ for the usual transfinite sequence of stages of the constructible universe $L$.

\begin{definition}
The language $\GDST$ is the two-sorted (sets and ordinals) language whose syntax consists of the following formulas and terms:
\begin{itemize}
\item[(1)] Atomic formulas are of the form $x\in y$, $x=y$, and $x\downarrow$ where $x$ and $y$ are either set or ordinal terms (mixing allowed). Here $x\downarrow$ means ``$x$ exists'' or ``the term $x$ has a referent''. When $\alpha$ and $\beta$ are both ordinal terms we write $\alpha < \beta$ interchangeably with $\alpha \in \beta$.
\item[(2)] Formulas are closed under the logical operations ``and'', ``or'', and ``not'' (but not ``implies'', although we use the shorthand $P\rightarrow Q$ for $\neg P \vee Q$).
\item[(3)] If $P(x)$ is a formula depending on (possibly among others) a set variable $x$ and if $X$ is a set term, then $\forall x\in X \; P(x)$ and $\exists x\in X \; P(x)$ are formulas. Similarly, if $P(\alpha)$ is a formula depending on an ordinal variable $\alpha$ and $A$ is an ordinal term, then $\forall \alpha < A \; P(\alpha)$ and $\exists \alpha < A \; P(\alpha)$ are formulas.
\item[(4)] If $P(x)$ is a formula depending on a set variable $x$ and $X$ is a set term, then $\{x\in X : P(x)\}$ is a set term.
\item[(5)] If $A$ is an ordinal term then $L_A$ is a set term.
\item[(6)] If $P(\alpha)$ is a formula depending on an ordinal variable $\alpha$, then $\min\{\alpha : P(\alpha)\}$ is an ordinal term.
\end{itemize}
The semantics of $\GDST$ are defined as follows: for any given ordinal $\eta$ and for each $M,N\in \N$, we define maps
\begin{align*}
\iota_\eta = \iota_{\eta,M,N} &: \FF_\GDST(M,N) \times L_\eta^M \times \eta^N \to \{\top,\bot,U\}\\
j_\eta = j_{\eta,M,N} &: \TT_\GDST(M,N) \times L_\eta^M \times \eta^N \to \eta\cup \{U\}\\
j'_\eta = j_{\eta,M,N}' &: \TT_\GDST'(M,N) \times L_\eta^M \times \eta^N \to L_\eta\cup \{U\}
\end{align*}
where $\FF_\GDST(M,N)$, $\TT_\GDST(M,N)$, and $\TT_\GDST'(M,N)$ denote the sets of formulas, ordinal terms, and set terms of $\GDST$ with $M$ free set variables and $N$ free ordinal variables, respectively. Here $U$ is meant to denote ``undefined''. The maps $\iota_\eta$, $j_\eta$, and $j'_\eta$ are defined recursively in the obvious way, noting that
\begin{itemize}
\item $\iota_\eta(X\downarrow)$ is undefined ($U$) if $j'_\eta(X) = U$, and is true ($\top$) otherwise.
\item $\iota_\eta(\forall x\in X \; P(x))$ is undefined if $j_\eta(A) = U$, otherwise false ($\bot$) if there exists $x\in j'_\eta(X)$ such that $\iota_\eta(P,x) = \bot$, otherwise undefined if there exists $x\in j'_\eta(X)$ such that $\iota_\eta(P,x) = U$, otherwise true ($\top$). $\iota_\eta(\exists x\in X \; P(x))$ can be described dually by de Morgan's laws.
\item $j'_\eta(\{x\in X : P(x)\}) = U$ if $\iota_\eta(P,x) = U$ for any $x\in j'_\eta(X)$, otherwise $j'_\eta(\{x\in X : P(x)\}) = \{x\in j'_\eta(X) : \iota_\eta(P,x)\}$.
\item $j'_\eta(L_A) = L_{j_\eta(A)}$, where $L_\alpha$ is the $\alpha$th stage of the constructible universe $L$.
\item $j_\eta(\min\{\alpha : P(\alpha)\})$ is equal to $\alpha < \eta$ if and only if both (A) $\iota_\eta(P,\alpha) = \top$ and (B) $\iota_\eta(P,\beta) = \bot$ for all $\beta < \alpha$. If this is not true for any $\alpha < \eta$, then $j_\eta(\min\{\alpha : P(\alpha)\}) = U$. Note that this means that $\iota_\eta(P,\beta)$ may be undefined for some $\beta \in (\alpha,\eta)$ without leading $j_\eta(\min\{\alpha : P(\alpha)\})$ to be undefined.
%\item $j'_\eta(x \in L_\alpha : P(x)) = x$ if $j_\eta(j'_\eta(\{x\in L_\cdot : P(x)\},\alpha)) = \{x\}$, $j'_\eta(x \in L_\alpha : P(x)) = U$ otherwise.
\end{itemize}
The triple $I_\eta = (\iota_\eta,j_\eta,j'_\eta)$ is said to be the \emph{interpretation} of $\GDST$ with respect to the model $L_\eta$.
\end{definition}

\begin{remark}
We will often use shorthand to indicate formulas and terms of $\GDST$ constructed using obvious techniques, for example $A + 1 := \min\{\alpha : A < \alpha\}$.
\end{remark}

In what follows we use $\xx$ to denote an element of $L_\eta^M \times \eta^N$, and use the shorthand $\iota_\eta(P[\xx]) := \iota_\eta(P,\xx)$, $j_\eta(A[\xx]) := j_\eta(A,\xx)$, $j'_\eta(X[\xx]) := j'_\eta(X,\xx)$.

\begin{remark}
\label{remarkST}
Formulas $P(\xx)$ built using only constructors (1)-(3) are just the $\Delta_0$ formulas of standard set theory $\ST$ (the one-sorted first-order predicate logic with binary predicates $\in$ and $=$). However, all such formulas have at least one free variable, in contrast to $\GDST$ which has formulas and terms with no free variables (built using constructor (6)). In the sequel we will denote the set of $\Delta_0$ formulas of $\ST$ by $\Delta_0(\ST)$.
\end{remark}

\begin{remark}
\label{remarkQB}
If $P$ is a formula of $\ST$, then by replacing all unrestricted quantifiers with bounded quantifiers of the form $x\in \MM$ we get a $\Delta_0$ formula $P'(\MM)$ such that $P$ holds of a model $\MM$ if and only if $P'(\MM)$ holds.
\end{remark}

\begin{observation}[Monotonicity]
\label{observationmonotone}
If $\eta < \tau$ and $\iota_\eta(P[\xx]) \neq U$ (resp. $j_\eta(A[\xx]) \neq U$, $j'_\eta(X[\xx]) \neq U$), then $\iota_\tau(P[\xx]) = \iota_\eta(P[\xx])$ (resp. $j_\tau(A[\xx]) = j_\eta(A[\xx])$, $j'_\tau(X[\xx]) = j'_\eta(X[\xx])$).
\end{observation}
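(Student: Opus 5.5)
We argue by simultaneous induction on the syntactic complexity of formulas and terms, proving all three claims (for $\iota$, $j$, $j'$) together, for all parameter tuples $\xx \in L_\eta^M\times\eta^N$ at once. Since $\eta<\tau$, we have $L_\eta\subseteq L_\tau$ and $\eta\subseteq\tau$, so $\xx$ is also a legitimate parameter tuple for the $\tau$-interpretation. The inductive hypothesis at each stage is: for every proper subformula or subterm $Q$ and every parameter tuple from $L_\eta^{M'}\times\eta^{N'}$, if the $\eta$-value of $Q$ is not $U$ then the $\tau$-value equals it.

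The base case is the atomic formulas $x\in y$, $x=y$, $x\downarrow$. If the $\eta$-value is defined, then the component terms have defined $\eta$-values, which are preserved by induction. The truth value is then computed from these referents alone, since membership and equality of actual sets do not depend on the ambient model. The Boolean connectives are immediate from the three-valued truth tables. One needs the observation that a defined value of $P\vee Q$ may come from only one disjunct being $\top$; that disjunct's value persists, and so the disjunction stays $\top$.

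For bounded quantifiers $\forall x\in X\,P(x)$, the defined $\eta$-value forces $j'_\eta(X)\ne U$, so $j'_\tau(X)=j'_\eta(X)$ is the same set. Its elements lie in $L_\eta$, so the inductive hypothesis applies to each instance $P(x)$. If the value is $\bot$, the witness instance stays $\bot$. If it is $\top$, every instance is $\top$ and stays so. Ordinal-bounded quantifiers are handled identically, because the bound $j_\eta(A)<\eta$ is preserved. Separation terms $\{x\in X:P(x)\}$ follow the same way: all instances are defined at $\eta$ and hence unchanged at $\tau$, giving the same set. For $L_A$, the value $j_\eta(A)$ is preserved, hence so is $L_{j_\eta(A)}$. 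We should also check that $L_{j_\eta(A)}\in L_\eta$, or else note that the codomain is understood accordingly.

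The main case, and the only one where $\tau$ sees genuinely new objects, is $\min\{\alpha:P(\alpha)\}$. Suppose $j_\eta$ returns $\alpha<\eta$. Then $\iota_\eta(P,\alpha)=\top$ and $\iota_\eta(P,\beta)=\bot$ for all $\beta<\alpha$, and all of these parameters lie in $\eta$. By induction the same values hold at $\tau$, so $\alpha$ satisfies conditions (A) and (B) at $\tau$. Uniqueness of such an $\alpha$ (any two candidates would contradict each other's condition (B)) gives $j_\tau(\min\{\alpha:P(\alpha)\})=\alpha$. The key point is that the definition of the min-term only inspects ordinals $\le\alpha$, so new ordinals in $[\eta,\tau)$, and new sets in $L_\tau\setminus L_\eta$ inside quantifier ranges, never affect an already-determined value. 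This "downward-looking" feature of clauses (3), (4) and (6) is what makes the induction go through.
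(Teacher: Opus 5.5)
Your proof is correct and takes the same approach as the paper, which simply says this observation follows by easy inductions on the depths of $P$, $A$, and $X$. Your simultaneous induction supplies those details faithfully: the hypothesis is quantified over all parameter tuples, strong-Kleene persistence handles connectives and quantifiers, and a uniqueness argument settles the $\min$ case. Your side check also goes through, since $j_\eta(A)+1\le\eta$ gives $L_{j_\eta(A)}\in L_\eta$.
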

\begin{observation}[Continuity]
\label{observationcontinuous}
If $\tau$ is a limit ordinal then for all $P,A,X,\xx$, we have $\iota_\tau(P[\xx]) = \iota_\eta(P[\xx])$, $j_\tau(A[\xx]) = j_\eta(A[\xx])$, and $j'_\tau(X[\xx]) = j'_\eta(X[\xx])$ for all $\eta < \tau$ sufficiently close to $\tau$.
\end{observation}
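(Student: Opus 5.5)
The plan is a simultaneous induction on the construction of formulas $P$, ordinal terms $A$ and set terms $X$, with parameters $\xx \in L_\tau^M\times\tau^N$. Since $\tau$ is a limit, we have $\xx\in L_{\eta_0}^M\times\eta_0^N$ for some $\eta_0<\tau$, so the claim at least makes sense for all $\eta\in[\eta_0,\tau)$. The statement splits into two halves.

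\textbf{Undefined case.} If the value at $\tau$ is $U$, then the value at every $\eta\in[\eta_0,\tau)$ is also $U$. This is just the contrapositive of Observation \ref{observationmonotone}: a defined value at some $\eta$ would have to persist to $\tau$.

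\textbf{Defined case.} If the value at $\tau$ is defined, it suffices to find a single $\eta_1<\tau$ at which the value is already defined. Observation \ref{observationmonotone} then gives agreement with the value at $\tau$ for every $\eta\in[\eta_1,\tau)$. So the whole statement reduces to the following \emph{witness-stage claim}: \LQ defined at $\tau$ implies defined at some $\eta<\tau$\RQ{}, proved by induction on the construction.

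\textbf{Routine cases.}
\begin{itemize}
\item Atomic formulas, the connectives, and $x\downarrow$: take the maximum of the finitely many stages supplied by the induction hypothesis for the subterms. For \emph{or} with one disjunct $\top$, or \emph{and} with one conjunct $\bot$, use the stage of that disjunct or conjunct alone.
\item $L_A$: if $j_\tau(A)=\alpha$ is defined, first take the stage at which $A$ becomes defined. Then go to some $\eta>\alpha$, which exists below $\tau$ because $\alpha<\tau$ and $\tau$ is a limit. From that point $L_\alpha\in L_\eta$ is available as a value.
\item $\min\{\alpha:P(\alpha)\}$ with value $\alpha<\tau$: the value depends on $P(\alpha)$ being $\top$ and $P(\beta)$ being $\bot$ for all $\beta<\alpha$. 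Each of these is defined at some stage below $\tau$. We need the supremum over $\beta<\alpha$ of those stages to stay below $\tau$.
\item Bounded quantifiers and separation $\{x\in X:P(x)\}$: the same pattern. An existential that is true, or a universal that is false, needs only one witness $x$ and so one stage. A true universal, or a defined separation term, needs $P(x)$ defined for \emph{every} $x\in j'_\tau(X)$.
\end{itemize}

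\textbf{Main obstacle.} The hard step is these \LQ for all\RQ{} cases: the infinitely many stages $\eta_x$ (or $\eta_\beta$) must be shown to have supremum below $\tau$. Here $j'_\tau(X)\in L_\tau$, so it is a set of size at most that of some $L_\gamma$ with $\gamma<\tau$.

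What I would try first is to choose the stages uniformly rather than one at a time. I would define the map $x\mapsto$ (least $\eta$ at which $P(x)$ is defined) by a $\Sigma_1$ recursion over the interpretations, which can be carried out in the metatheory $\KP$. I would then collect the stages using $\Sigma_1$-collection, bounding the supremum by the stage at which the whole $L_\gamma$-indexed family has been evaluated.

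I expect this to be exactly where care is needed: for an arbitrary limit $\tau$ the supremum of the $\eta_x$ could a priori reach $\tau$. If the uniform-choice argument does not close the gap, I would isolate precisely what property of $\tau$ makes the supremum bounded, rather than assert the bound.
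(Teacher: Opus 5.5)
Your reduction is sound. By the contrapositive of Observation \ref{observationmonotone}, a value that is undefined at $\tau$ is undefined at every $\eta<\tau$, so everything comes down to your witness-stage claim. Its only non-trivial cases are the ``for all'' ones: a true $\forall$, a false $\exists$, a defined separation term, and condition (B) for $\min$. The step that fails is the one you flagged, and your proposed remedy does not close it. $\Sigma_1$-collection in the metatheory $\KP$ only tells you that $\{\eta_x : x\in j'_\tau(X)\}$ is a set, so its supremum exists and is $\le\tau$; nothing makes it $<\tau$. Bounding by ``the stage at which the whole family has been evaluated'' is circular, since that stage is exactly the ordinal whose existence below $\tau$ is in question. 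A bound below $\tau$ requires collection \emph{inside} $L_\tau$.

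In fact the gap cannot be closed, because the statement is false for general limit $\tau$. Take $\tau=\omega^2$. Let $R(\alpha,n)$ be the constructor-(6)-free formula ``some $f\in L_\alpha$ is a strictly increasing function from $n$ into the nonzero limit ordinals''. Put $Q(\beta):=\forall n<\beta\;\min\{\alpha:R(\alpha,n)\}\downarrow$.
\begin{itemize}
\item The least $\alpha_n$ with $R(\alpha_n,n)$ lies in $(\omega\cdot n,\omega\cdot(n+1))$.
\item By Observation \ref{observationignoreconstructor}, $R$ is never undefined, so $j_\eta(\min\{\alpha:R(\alpha,n)\})\neq U$ if and only if $\alpha_n<\eta$.
\item Hence $\iota_{\omega^2}(Q[\omega])=\top$, while $\iota_\eta(Q[\omega])=U$ for every $\eta\in(\omega,\omega^2)$.
\end{itemize}
The paper's ``easy induction'' passes over the same hole. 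Its one explicit use, in Proposition \ref{propositionadmissible}, involves only a formula in $\Delta_0(\ST)$, where Observation \ref{observationignoreconstructor} already suffices.

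What is true is the version for admissible $\tau$. For fixed $P$, the relation $\iota_\eta(P[\xx])=v$ is $\Delta_1$ over $L_\tau$ in $(\eta,\xx)$. Hence $x\mapsto\eta_x$ is $\Sigma_1$-definable over $L_\tau$, and $\Sigma_1$-collection in $L_\tau$ bounds the $\eta_x$ below $\tau$. The same applies to the stages for $\beta<\alpha$ in the $\min$ case. With that hypothesis your argument goes through unchanged.
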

\begin{observation}
\label{observationignoreconstructor}
If $P$ is a formula not containing any instances of construction rule (6), $\eta < \tau$, and $\xx\in L_\eta^M \times \eta^N$, then $\iota_\eta(P[\xx]) = \iota_\tau(P[\xx]) \in \{\top,\bot\}$, in which case we can just write $\iota(P[\xx])$.
\end{observation}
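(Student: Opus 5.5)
The proof is a structural induction on formulas and terms built with constructors (1)--(5) only. The claim to carry through the induction is that, for every $\eta$ and every assignment $\xx\in L_\eta^M\times\eta^N$:
\begin{itemize}
\item every such formula $P$ has $\iota_\eta(P[\xx])\in\{\top,\bot\}$;
\item every such ordinal term $A$ has $j_\eta(A[\xx])\neq U$;
\item every such set term $X$ has $j'_\eta(X[\xx])\neq U$.
\end{itemize}
Once totality is established, Observation \ref{observationmonotone} applied with $\eta<\tau$ gives $\iota_\tau(P[\xx])=\iota_\eta(P[\xx])$ immediately. One point needs a little care first. The elements $\xx$ lie in $L_\eta\subseteq L_\tau$ and $\eta\subseteq\tau$, so the interpretation at $\tau$ is evaluated at the same tuple. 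Hence monotonicity really applies.

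The base cases are variables and atomic formulas. Without constructor (6), every ordinal term is an ordinal variable, because (6) is the only rule producing ordinal terms. Its value is the assigned element of $\eta$, which is defined. For the same reason, set terms are either variables, $L_A$ with $A$ an ordinal variable, or separations $\{x\in X:Q(x)\}$.
\begin{itemize}
\item For a variable, the value is the assigned element of $L_\eta$, which is defined.
\item For $L_A$, the value is $L_{j_\eta(A)}$. Here $j_\eta(A)<\eta$, so this set lies in $L_\eta$; this uses the standard fact that $\alpha<\eta$ implies $L_\alpha\in L_\eta$.
\item For a separation, I induct simultaneously on formulas. By hypothesis $j'_\eta(X[\xx])$ is defined, and $\iota_\eta(Q[x,\xx])\in\{\top,\bot\}$ for every $x$ in it, so the subset is defined. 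The main obstacle is showing that this subset again lies in $L_\eta$. It is $\Delta_0$-definable over some $L_{\alpha+1}$ with $\alpha<\eta$ that contains $X$ and the parameters; I take $\alpha$ to be the maximum of the relevant ranks of parameters, which is finite data. By $\Delta_0$ absoluteness, that definability places the subset in $L_{\alpha+2}\subseteq L_\eta$ when $\eta$ is a limit. If $\eta$ need not be a limit, I would instead read the statement as saying that the value is defined as a set and invoke the paper's intended convention that $j'$ takes values in $L$.
\end{itemize}

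Atomic formulas $x\in y$, $x=y$, and $x\downarrow$ then receive $\top$ or $\bot$, since their arguments are defined. For connectives and bounded quantifiers, the inductive hypothesis gives that all instances over the defined bound $j'_\eta(X)$ or $j_\eta(A)$ are two-valued. The undefined clause in the quantifier semantics therefore never fires, and the result is two-valued.

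Finally, I note that the value is independent of $\eta$ by Remark \ref{remarkST}. These formulas are $\Delta_0$, and $\Delta_0$ truth is absolute between transitive sets containing the parameters. This gives an alternative route to the equality that does not rely on Observation \ref{observationmonotone}, and it justifies writing $\iota(P[\xx])$.
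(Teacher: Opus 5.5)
Your proof is correct and is essentially the paper's argument: the paper only says these are easy inductions on the depths of $P$, $A$, $X$, and your simultaneous structural induction proving two-valuedness is that induction. The equality then follows from monotonicity, or equally directly from the fact that no semantic clause except rule (6) refers to $\eta$. The limit/successor detour about separation terms lying in $L_\eta$ is not needed for this statement, since the induction only uses that every set-term value is a \emph{subset} of $L_\eta$ (so the inductive hypothesis applies to its elements), and that is immediate.
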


The proofs are easy inductions on the depths of $P$, $A$, and $X$.

\begin{proposition}
\label{propositionselfmeta}
There exist set terms $\eval_\iota$, $\eval_j$, and $\eval_{j'}$ such that for all limit $\eta,\tau$ with $\eta < \tau$, for all $M,N\in\N$, for all $P\in \FF_\GDST(M,N)$, and for all $\xx\in L_\eta^M \times \eta^N$ we have
\[
j'_\tau(\eval_\iota[P,\xx,\eta])  = \begin{cases}
\iota_\eta(P[\xx]) & \text{ if } \iota_\eta(P[\xx]) \neq U\\
U' & \text{ if } \iota_\eta(P[\xx]) = U
\end{cases}
\]
and similarly for $\eval_j$, $\eval_{j'}$. Here $U'$ is a marked element distinct from $U$, so that $j'_\tau(\eval_\iota[P,\xx,\eta]) \neq U$ for all $P,\xx,\eta$.
\end{proposition}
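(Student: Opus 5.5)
We build the evaluation terms by formalizing the Tarskian recursion defining $I_\eta$ inside $L_\tau$. Since $\eta<\tau$ and $\tau$ is a limit, $L_\eta\in L_\tau$, and so is $L_{\eta+\omega}$. Syntax is coded by natural numbers: formulas and terms of $\GDST$ are finite strings, so $P\in\omega\subseteq L_\tau$.

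The key object is a ``satisfaction table'' $S_\eta$. This is the unique function, with domain the set of pairs (subexpression code, assignment in $L_\eta^{<\omega}\times\eta^{<\omega}$), whose values lie in $\{\top,\bot,U'\}\cup\eta\cup L_\eta$, and which obeys the clauses in the definition of $\iota_\eta,j_\eta,j'_\eta$, with $U'$ in place of $U$.

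The proof proceeds in three steps.

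First, we write a $\Delta_0$ formula $\mathrm{Sat}(S,E,\eta)$, where $E$ is the domain set, expressing that $S$ satisfies every recursive clause at every node. By Remark~\ref{remarkST}, constructors (1)--(3) give exactly the $\Delta_0$ formulas, so this is a formula of $\GDST$.
\begin{itemize}
\item The $\min$ clause only quantifies over $\alpha<\eta$.
\item The separation clause only quantifies over $x\in S(X)\subseteq L_\eta$.
\item The clause for $L_A$ requires computing $\alpha\mapsto L_\alpha$ for $\alpha<\eta$. We handle this by also carrying the bounded sequence $\langle L_\alpha:\alpha\le\eta\rangle$, which is $\Delta_0$-characterizable via a uniform definable-power-set operation.
\end{itemize}
Uniqueness of $S$ follows by induction on the depth of subexpressions, exactly as the maps $\iota_\eta$, $j_\eta$, $j'_\eta$ are themselves well defined.

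Second, we need an existence witness. The table $S_\eta$ is a subset of some $L_{\eta+n}$, and is definable over $L_{\eta+k}$ for some finite $k$ by iterating the recursion on depth. Hence $S_\eta\in L_{\eta+\omega}\subseteq L_\tau$. We then set
\[
\eval_\iota[P,\xx,\eta] \;=\; \{\,y\in L_{\eta+\omega} : \exists S\in L_{\eta+\omega}\;(\mathrm{Sat}(S,E,\eta)\wedge S(P,\xx)=y)\,\}.
\]
Strictly, this yields the singleton of the value, so we unpack it with a union-style set term, or we adopt the convention that the value is coded this way. Here $L_{\eta+\omega}$ is written as $L_{\min\{\beta:\beta>\eta\wedge \beta \text{ limit}\}}$, which is a legitimate ordinal term, with the limit condition $\Delta_0$ in $\beta$.

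Because $\mathrm{Sat}$ and all bounded quantifiers contain no instances of constructor (6), Observation~\ref{observationignoreconstructor} makes their truth values two-valued and absolute. The only use of (6) is the $\min$ defining $\eta+\omega$. Its body is defined at all $\beta$, and it returns $\eta+\omega\le\tau$; if $\eta+\omega=\tau$ we instead bound by $\tau$'s predecessors via continuity. So $j'_\tau$ of the term is never $U$, and by uniqueness it equals the correct value. The terms $\eval_j$ and $\eval_{j'}$ are obtained by reading off the other components of $S$.

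The main obstacle is the bound $S_\eta\in L_\tau$. The recursion for a fixed $P$ has finite depth, so the relevant portion of $S_\eta$ (restricted to subexpressions of $P$) is built in finitely many definability steps over $L_\eta$. This lands it in $L_{\eta+d}$ with $d$ depending on $P$. We therefore restrict $E$ to subexpressions of $P$ and search in $L_{\eta+\omega}$, which lies inside $L_\tau$ because $\tau$ is a limit greater than $\eta$, so $\eta+\omega\le\tau$. In the edge case $\eta+\omega=\tau$, the search set $L_{\eta+\omega}$ itself is not available as an element of $L_\tau$. We then replace it by $L_{\eta+d(P)}$, where $d(P)$ is a depth-dependent ordinal term $\eta+d$ obtained by applying $+1$ finitely many times, computed from the code $P$. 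This step requires checking carefully that the depth bound is uniform and definable.
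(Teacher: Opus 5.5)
Your core argument is sound, and it is presumably the ``straightforward'' proof the paper omits. You restrict to subexpressions of $P$, characterize the table by a formula with no instance of constructor (6), get uniqueness by induction on depth, and observe that finitely many definability steps over $L_\eta$ put the restricted table in $L_{\eta+d}$ for some finite $d$, hence in $L_\tau$. Drop the earlier claim that the unrestricted $S_\eta$ lies in $L_{\eta+\omega}$: its depth-$k$ pieces need $L_{\eta+d_k}$ with $d_k\to\infty$, so the union need not be an element of $L_{\eta+\omega}$. You only use the restricted version anyway.

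The gap is in the step you flagged, namely the construction of the term itself. The term you write searches inside $L_{\min\{\beta:\beta>\eta\wedge\beta\text{ limit}\}}$. When $\tau=\eta+\omega$, which the statement allows, that $\min$ has $j_\tau$-value $U$, so $j'_\tau(\eval_\iota[P,\xx,\eta])=U$. One term must serve all $\tau$, so you cannot switch terms in this case, and Observation \ref{observationcontinuous} does not rescue it (the $\min$ is $U$ at every stage up to $\eta+\omega$). You would have to use the $L_{\eta+d(P)}$ variant throughout. That needs an explicit computable bound $d(P)$ and a $\GDST$ term computing $\eta+n$ for a variable $n$, and you supply neither.

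Both are unnecessary: let constructor (6) do the search. Let $\mathrm{Sat}(S,P,\xx,\eta)$ be your (6)-free formula, uniform in the code $P$, and put
\[
B[P,\xx,\eta]:=\min\{\beta : \exists S\in L_\beta\;\mathrm{Sat}(S,P,\xx,\eta)\}.
\]
By Observation \ref{observationignoreconstructor} the body is two-valued at every $\beta<\tau$. It is true at $\beta=\eta+d<\eta+\omega\le\tau$. Hence $j_\tau(B)\neq U$ for every limit $\tau>\eta$, and you never need to know $d$.

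Then read the value off by separation, since $\GDST$ has no union constructor to unpack a singleton. For $\eval_{j'}$ applied to a set-term code $X$, take
\[
\{y\in L_\eta : \exists S\in L_B\,(\mathrm{Sat}(S,X,\xx,\eta)\wedge(y\in S(X,\xx)\vee S(X,\xx)=U'))\}.
\]
This returns the value when it is defined, and the marker $L_\eta\notin L_\eta$ otherwise.

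Finally, the clause $j'_\eta(L_A)=L_{j_\eta(A)}$ needs no $\Delta_0(\ST)$ characterization of $\langle L_\alpha\rangle_{\alpha\le\eta}$, which is not evidently $\Delta_0$ without extra witnesses. Constructor (5) applied to a bound ordinal variable $\gamma<\eta$ expresses the clause directly, and Observation \ref{observationignoreconstructor} excludes only constructor (6).
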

Essentially, this proposition states that for $\eta < \tau$, $L_\eta$ is a model of $\GDST$ whose semantics can be defined relative to the semantics of the model $L_\tau$ of $\GDST$. The proof is straightforward and we omit it.

\begin{corollary}
\label{corollaryselfmeta}
There exist a formula $\overline\eval_\iota$, an ordinal term $\overline\eval_j$, and a set term $\overline\eval_{j'}$ such that for all limit $\eta$, for all $M,N\in\N$, for all $P\in \FF_\GDST(M,N)$, and for all $\xx\in L_\eta^M \times \eta^N$ we have
\[
\iota_\eta(\overline\eval_\iota[P,\xx]) = \iota_\eta(P[\xx])
\]
and similarly for $\overline\eval_j$, $\overline\eval_{j'}$.
\end{corollary}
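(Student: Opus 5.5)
Corollary \ref{corollaryselfmeta} should follow from Proposition \ref{propositionselfmeta} together with Continuity (Observation \ref{observationcontinuous}) and Monotonicity (Observation \ref{observationmonotone}). The idea is to evaluate $P[\xx]$ at some limit stage $\eta' < \eta$ that already contains $\xx$ and at which the truth value of $P[\xx]$ has stabilized. We then quantify over such stages inside $\GDST$ using the $\min$ constructor.

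Concretely, I would define the ordinal term
\[
\sigma[P,\xx] := \min\{\alpha : \alpha \text{ is a limit},\ \xx \in L_\alpha \times \alpha^N,\ \eval_\iota[P,\xx,\alpha] \neq U'\}.
\]
Here ``$\alpha$ is a limit'' and the membership condition are $\Delta_0$ and hence always defined. The last conjunct is defined whenever $\alpha$ is below the ambient limit stage $\eta$, by Proposition \ref{propositionselfmeta} applied with $\tau = \eta$. Then I would set
\[
\overline\eval_\iota[P,\xx] := \bigl(\eval_\iota[P,\xx,\sigma[P,\xx]] = \top\bigr).
\]
The analogous definitions, using $\eval_j$ and $\eval_{j'}$ and reading off the value, give $\overline\eval_j$ and $\overline\eval_{j'}$.

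For correctness there are two cases. If $\iota_\eta(P[\xx]) \neq U$, then by Continuity there is $\eta_0 < \eta$ beyond which $\iota_{\eta'}(P[\xx]) = \iota_\eta(P[\xx])$. Choose a limit $\eta' \in (\eta_0,\eta)$ large enough to contain $\xx$; if $\eta$ is a limit of limits this is possible, and otherwise we handle $\eta = \lambda+\omega$ directly. For every $\alpha < \eta'$, the predicate in $\sigma$ is defined, because each $\eval_\iota[P,\xx,\alpha]$ is defined, never $U$. Hence $\sigma$ is defined and is the least limit $\alpha$ at which $P[\xx]$ is defined. By Monotonicity its value there equals $\iota_\eta(P[\xx])$. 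Conversely, if $\iota_\eta(P[\xx]) = U$, then by Monotonicity $\iota_\alpha(P[\xx]) = U$ for all $\alpha < \eta$. So no $\alpha < \eta$ satisfies the predicate, $\sigma$ is undefined, and the whole formula evaluates to $U$, as required.

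The main obstacle is the edge case where $\eta$ is a limit but not a limit of limits, e.g.\ $\eta = \omega$ or $\eta = \lambda + \omega$. There the Continuity stages $\eta' < \eta$ are successors, which Proposition \ref{propositionselfmeta} does not cover. My first attempt would be to drop the requirement that $\alpha$ be a limit and instead use a version of $\eval$ valid for successor $\eta'$, evaluating at $L_{\alpha}$ inside $L_\eta$. If the proposition's proof genuinely needs limits, I would instead unwind $\iota_{\lambda+\omega}$ directly as a definable recursion over the finitely-many-steps-above-$\lambda$ structure, since $L_{\lambda+n}$ for $n<\omega$ is uniformly definable in $L_{\lambda+\omega}$.

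A secondary point is to check that the condition in the $\min$ is fully defined (not $U$) at every $\alpha$ below the witness. This holds precisely because of the marked value $U'$ provided by Proposition \ref{propositionselfmeta}, which was presumably introduced for this purpose.
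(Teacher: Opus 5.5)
Your proposal is correct and is essentially the paper's proof. The paper sets $\overline\eval_\iota(P[\xx]) = \eval_\iota(P,\xx,\min\{\eta : \eval_\iota(P,\xx,\eta) \neq U'\})$, i.e.\ it evaluates at the least stage where $\eval_\iota$ does not return $U'$, exactly as you do. The paper lets that $\min$ range over all stages, tacitly using $\eval_\iota$ at non-limit stages; this is precisely your first fix for the $\lambda+\omega$ edge case. Your extra restrictions (limit stages that contain $\xx$) and the conversion via $=\top$ into a genuine formula are added care that the paper omits.
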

\begin{proof}
Let $\overline\eval_\iota(P[\xx]) = \eval_\iota(P,\xx,\min\{\eta : \eval_\iota(P,\xx,\eta) \neq U'\})$, and similarly for $\overline\eval_j$, $\overline\eval_{j'}$.
\end{proof}

Note that this corollary means that the language $\GDST$ is ``selfmeta'', or capable of expressing its own truth predicate. We recall that Tarski's theorem on the indefinability of truth \cite{Tarski} implies that it is impossible for a language to express its own truth predicate if it satisfies certain technical conditions, one of which is that it has an axiomatization with respect to which statements of the form $P \rightarrow Q$ are allowed to be proven using natural deduction. (Note that both classical and intuitionistic logic satisfy this condition.) However, $\GDST$ does not satisfy this condition, since the axiomatization $\AA_\GDST$ we introduce in Section \ref{sectionaxiomatization} does not allow implication statements to be proven by natural deduction, as is in fact entailed by the semantics of $\GDST$ as described above.

%for two reasons: first of all, $\GDST$ does not have a notion of implication, see \cite[Appendix D]{Simmons11} for more on the motivation for this decision. Secondly, we have not introduced an axiomatization for $\GDST$ but rather we prove statements about it using the metatheory $\KP$, so there is no notion of a natural deduction proof that would yield a statement of the form $P \Rightarrow Q$. It would be possible to introduce an axiomatization of $\GDST$, but it is not relevant for our purposes here.

%Also, if $M'$ is a transitive submodel of $M$ with $\eta\in M'$, then $M' \satisfies_\eta P$ implies $M \satisfies_\eta P$. (This is probably too technical)

\section{An axiomatization of $\GDST$}
\label{sectionaxiomatization}

\begin{definition}
Let $P_1,\ldots,P_k \;\Rightarrow\, Q$ denote a statement of the form ``$P_1,\ldots,P_k$ implies $Q$''. We define the \emph{standard axiomatization} of $\GDST$, $\AA_\GDST$, to be the set of implications generated by the following inference rules:
\begin{itemize}
\item[(0)] (Cf. \cite[Appendix B]{Simmons11}) For all $i = 1,\ldots,k$, $P_1,\ldots,P_k \;\Rightarrow\; P_i$ is a theorem of $\AA_\GDST$ (reflexivity). If $P_1,\ldots,P_k \;\Rightarrow\; Q$ and $P_1,\ldots,P_k,Q \;\Rightarrow\; R$ are theorems of $\AA_\GDST$, then so is $P_1,\ldots,P_k \;\Rightarrow\; R$ (modus ponens). If $P_1,\ldots,P_k \;\Rightarrow\; Q(x)$ and $P_1,\ldots,P_k \;\Rightarrow\; X\downarrow$ are theorems of $\AA_\GDST$ and $x$ does not appear free in $P_1,\ldots,P_k$, then $P_1,\ldots,P_k \;\Rightarrow\; \forall x\in X \; Q(x)$ is a theorem of $\AA_\GDST$ (generalization). If $P_1,\ldots,P_k,Q_1 \;\Rightarrow\; R$, $P_1,\ldots,P_k,Q_2 \;\Rightarrow\; R$, and $P_1,\ldots,P_k \;\Rightarrow\; Q_1\vee Q_2$ are theorems of $\AA_\GDST$, then so is $P_1,\ldots,P_k \;\Rightarrow\; R$ ($\vee$-elimination).
\item[(1)] Writing $P\downarrow$ as shorthand for $P \vee \neg P$, we include for each constructor (1)-(5) an implication: if the components of an expression are well-defined, then the expression is well-defined. For example, if $P_1,\ldots,P_k \;\Rightarrow\; A\downarrow$ is a theorem of $\AA_\GDST$ (resp. $P_1,\ldots,P_k \;\Rightarrow\; X\downarrow$ and $P_1,\ldots,P_k, x\in X \;\Rightarrow\; P(x)\downarrow$ are both theorems of $\AA_\GDST$), then so is $P_1,\ldots,P_k \;\Rightarrow\, L_A\downarrow$ (resp. $P_1,\ldots,P_k \Rightarrow \{x\in X : P(x)\}\downarrow$).
\item[(2)] If $P\downarrow$ and $Q\downarrow$, then we use the shorthand $P \rightarrow Q$ for $\neg P \vee Q$. We include all standard axioms of (classical, Hilbert) propositional logic conditional on their components being well-defined, such as $P\downarrow, Q\downarrow \;\Rightarrow\, P \rightarrow (Q \rightarrow P)$, together with the implication form of modus ponens ($P, (P\rightarrow Q) \;\Rightarrow\, Q$), for which we do not require $P\downarrow$ or $Q\downarrow$ to hold. Note that we do not include the standard axioms of predicate logic unless $P\downarrow$ holds, since e.g. $P \;\rightarrow\, P$ is shorthand for $\neg P \vee P$ which is the law of excluded middle.
\item[(3)] If the relevant formulas and terms are well-defined, we include all standard axioms and inference rules of predicate logic, e.g. $X\downarrow , Y\downarrow , \forall x\in X \; P(x)\downarrow \;\Rightarrow\, (\forall x\in X \; P(x) \wedge Y\in X) \rightarrow P(Y)$ and $P(x) , X\downarrow \;\Rightarrow\, P(X)$.
\item[(4)]
\[
X\downarrow , Y\downarrow , \forall x\in X \; P(x)\downarrow \;\Rightarrow\, (Y\in \{x\in X : P(x)\} \leftrightarrow (Y\in X \wedge P(Y)).
\]
\item[(5)] Definition of $L_\alpha$:
\begin{align*}
\forall x\in L_\alpha \; &\exists \beta < \alpha \; \exists P\in \Delta_0(\ST) \; \exists M\in\N \; \exists \zz\in L_\beta^M \;\; x = \{y \in L_\beta : \eval_\iota(P,(y,\zz),\beta) = \top\}\\
&\forall \beta < \alpha \; \forall P\in \Delta_0(\ST) \; \forall M\in\N \; \forall \zz\in L_\beta^M \;\; \{y \in L_\beta : \eval_\iota(P,(y,\zz),\beta) = \top\} \in L_\alpha.
\end{align*}
\item[(6)] Letting $A_P = \min\{\alpha : P(\alpha)\}$, we have
\[
P(A), \forall \beta < A \; P(\beta)\downarrow \;\Rightarrow\, A_P\downarrow \;\Rightarrow\, (P(A_P) \wedge \forall \beta < A_P \; \neg P(\beta)).
\]
(This axiom also serves as our induction axiom.)
\item[(7)] Transitivity of ordinals:
\[
\alpha < \beta < \gamma \rightarrow \alpha < \gamma.
\]
\item[(8)] Extensionality:
\[
\forall z\in x \; z\in y \wedge \forall z\in y \; z\in x \;\rightarrow\; x = y.
\]
\end{itemize}
\end{definition}

\ignore{
% Old version
\begin{definition}
Let $P_1,\ldots,P_k \;\Rightarrow\, Q$ denote an inference rule of the form ``given $P_1,\ldots,P_k$, infer $Q$''. We define the \emph{standard axiomatization and inference rules} of $\GDST$, $\AA_\GDST$, as follows:
\begin{itemize}
\item[(1)] Writing $P\downarrow$ as shorthand for $P \vee \neg P$, we include for each constructor (1)-(5) an inference rule: if the components of an expression are well-defined, then the expression is well-defined. For example, $A\downarrow \;\Rightarrow\, L_A\downarrow$ and $X\downarrow , P(x)\downarrow \;\Rightarrow\, \{x\in X : P(x)\} \downarrow$.
\item[(2)] If $P\downarrow$ and $Q\downarrow$, then we use the shorthand $P \rightarrow Q$ for $\neg P \vee Q$. We include all standard axioms of (classical, Hilbert) propositional logic conditional on their components being well-defined, such as $P\downarrow, Q\downarrow \;\Rightarrow\, P \rightarrow (Q \rightarrow P)$, together with the inference rule of modus ponens ($P, (P\rightarrow Q) \;\Rightarrow\, Q$), for which we do not require $P\downarrow$ or $Q\downarrow$ to hold. Note that we do not include the standard axioms of predicate logic unless $P\downarrow$ holds, since e.g. $P \;\rightarrow\, P$ is shorthand for $\neg P \vee P$ which is the law of excluded middle.
\item[(3)] If the relevant formulas and terms are well-defined, we include all standard axioms and inference rules of predicate logic, e.g. $X\downarrow , Y\downarrow , \forall x\in X \; P(x)\downarrow \;\Rightarrow\, (\forall x\in X \; P(x) \wedge Y\in X) \rightarrow P(Y)$ and $P(x) , X\downarrow \;\Rightarrow\, P(X)$.
\item[(4)]
\[
X\downarrow , Y\downarrow , P(x)\downarrow \;\Rightarrow\, (Y\in \{x\in X : P(x)\} \leftrightarrow (Y\in X \wedge P(Y)).
\]
\item[(5)] Definition of $L_\alpha$:
\begin{align*}
\forall x\in L_\alpha \; &\exists \beta < \alpha \; \exists P\in \Delta_0(\ST) \; \exists M\in\N \; \exists \zz\in L_\beta^M \;\; x = \{y \in L_\beta : \eval_\iota(P,(y,\zz),\beta) = \top\}\\
&\forall \beta < \alpha \; \forall P\in \Delta_0(\ST) \; \forall M\in\N \; \forall \zz\in L_\beta^M \;\; \{y \in L_\beta : \eval_\iota(P,(y,\zz),\beta) = \top\} \in L_\alpha.
\end{align*}
\item[(6)] Letting $A_P = \min\{\alpha : P(\alpha)\}$, we have
\[
P(A), \forall \beta < A \; P(\beta)\downarrow \;\Rightarrow\, A_P\downarrow \;\Rightarrow\, (P(A_P) \wedge \forall \beta < A_P \; \neg P(\beta)).
\]
\item[(7)] Transitivity of ordinals:
\[
\alpha < \beta < \gamma \rightarrow \alpha < \gamma.
\]
\item[(8)] Extensionality:
\[
\forall z\in x \; z\in y \wedge \forall z\in y \; z\in x \;\rightarrow\; x = y.
\]
\end{itemize}
\end{definition}
}% end ignore

It is easy to check (e.g. in $\KP$ or $\AA_\GDST$) that this axiomatization is truth-preserving, that is, the set
\[
\{\LQ P_1(\xx),\ldots,P_k(\xx) \;\Rightarrow\; Q(\xx)\RQ : \forall \xx\in L_\eta^M \times \eta^N \; \iota_\eta(P_1[\xx]) = \cdots = \iota_\eta(P_k[\xx]) = \top \;\Rightarrow\; \iota_\eta(Q[\xx]) = \top\}
\]
is preserved by the inference rules of $\AA_\GDST$.
%if $P_1(\xx),\ldots,P_k(\xx),Q(\xx)$ are formulas such that $\iota_\eta(P_i[\xx]) = \top$ for all $\xx \in L_\eta^M \times \eta^N$ and $i = 1,\ldots,k$, and $Q(\xx)$ is a formula produced according to the inference rules of $\AA_\GDST$, then $\iota_\eta(Q[\xx]) = \top$ for all $\xx \in L_\eta^M \times \eta^N$.

Note that this axiomatization says nothing about the value of $\eta$ with respect to which the language $\GDST$ is interpreted. Indeed, any value of $\eta$ is consistent with $\AA_\GDST$. In what follows we will be interested in extensions $\AA_\GDST + A\downarrow$, where $A$ is an ordinal term that will be assumed to exist. Such an extension is consistent with any value of $\eta$ such that $j_\eta(A) \neq U$, in particular by Observation \ref{observationmonotone} if it is consistent with some $\eta$ then it is also consistent with all $\tau > \eta$.

Thus, working within the context of $\KP$, we see that for every ordinal $\eta$ we have a model $(L_\eta,\eta)$ of $\AA_\GDST$. Next we want to do the reverse: working within the axiomatization $\AA_\GDST$ we want to prove that for certain ordinals $\eta$, $L_\eta$ is a model of $\KP$. Such ordinals are called \emph{admissible}:

\begin{definition}
\label{definitionKP}
Recall that an ordinal $\eta$ is called \emph{admissible} if $L_\eta$ is a model of Kripke--Platek set theory $\KP$. We recall the axioms of $\KP$ as follows. In all cases when we refer to an arbitrary formula $P$ we allow it to have free variables beyond those listed, which can range over arbitrary sets.
\begin{itemize}
\item (Extensionality) If $x\subseteq y$ and $y\subseteq x$, then $x = y$.
\item (Set induction) If $P(x)$ is a formula of $\ST$ such that for all $x$, if $P(y)$ holds for all $y\in x$, then $P(x)$ holds, then $P(x)$ holds for all $x$.
\item (Empty set) There exists $\emptyset$ such that $x\notin\emptyset$ for all $x$.
\item (Pairing) For all $x,y$ there exists $\{x,y\}$ such that for all $z$, we have $z\in \{x,y\}$ if and only if $z = x$ or $z = y$.
\item (Union) For all $x$ there exists $\bigcup(x)$ such that for all $z$, we have $z\in \bigcup(x)$ if and only if there exists $y\in x$ such that $z\in y$.
\item ($\Delta_0$-separation) If $P(x)$ is a $\Delta_0$ formula of $\ST$, then for all $x$ there exists a set $\{y\in x : P(y)\}$ such that for all $z$, we have $z\in \{y\in x : P(y)\}$ if and only if both $z\in x$ and $P(z)$ holds.
\item ($\Delta_0$-collection) Given any $\Delta_0$ formula $P(x,y)$ of $\ST$ and any set $z$, if for every $x\in z$ there exists $y$ such that $P(x,y)$ holds, then there exists a set $w$ such that for every $x\in z$ there exists $y\in w$ such that $P(x,y)$ holds.
\item (Infinity) The set of natural numbers $\N$ exists.
\end{itemize}
\end{definition}

\begin{remark}
\label{remarkKP}
It has been suggested by Weaver \cite[\sectionsymbol 11]{Weaver_well_ordering} that the system $\KP$ is impredicative due to the fact that the formula $P$ in the set induction schema is allowed to be any formula of $\ST$, rather than being required to be a $\Delta_0$ formula. Actually, a $\Delta_0$ version of the set induction schema suffices for our purposes in the present paper (except for the proof of Proposition \ref{propositionalphathetaw}), but we include the full schema in Definition \ref{definitionKP} to be compatible with the literature. The $\Delta_0$ version of the set induction schema implies that $L_\alpha$ satisfies the full induction schema for any ordinal $\alpha$, so changing which set induction schema we use does not change which ordinals are admissible.
\end{remark}

\begin{proposition}[$\AA_\GDST$]%\footnote{Note that to interpret a natural language statement as a theorem of $\AA_\GDST$, one must first write it in the form $A \Rightarrow B$.}]
If $\eta$ is a limit ordinal greater than $\omega$, then all axioms of $\KP$ except for possibly the axiom of $\Delta_0$-collection are satisfied by $L_\eta$.
\end{proposition}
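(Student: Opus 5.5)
I plan to verify the axioms one by one inside $\AA_\GDST$. The tools are the two clauses of axiom (5), which characterize $L_\alpha$, together with the fact that $\Delta_0$ formulas of $\ST$ are exactly the $\GDST$ formulas built from constructors (1)--(3). By Observation \ref{observationignoreconstructor} and Remark \ref{remarkST}, such formulas always have a truth value, so every relevant $P\downarrow$ hypothesis is available and classical reasoning may be used. Throughout I use the basic facts that $L_\beta \subseteq L_\alpha$ and $L_\beta\in L_{\alpha}$ for $\beta<\alpha$, and that $L_\beta$ is transitive. I would derive these first, by induction via axiom (6) applied to $\min\{\alpha : \neg(\ldots)\}$, directly from axiom (5). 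Since $\eta$ is a limit, every $x\in L_\eta$ lies in some $L_\beta$ with $\beta+1<\eta$.

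The easy axioms come next. \emph{Extensionality} in $L_\eta$ follows from axiom (8) together with transitivity of $L_\eta$, since the bounded quantifiers over $L_\eta$ agree with those over $x$ and $y$. \emph{Empty set}: take $\{y\in L_0 : \bot\}\in L_1$, or simply $L_0=\emptyset$. \emph{Pairing}: given $x,y\in L_\beta$, the set $\{z\in L_\beta : z=x\vee z=y\}$ lies in $L_{\beta+1}\subseteq L_\eta$ by the second clause of (5), using parameters $x,y$ and the $\Delta_0$ formula $z=x\vee z=y$. \emph{Union}: given $x\in L_\beta$, by transitivity $\bigcup x\subseteq L_\beta$, so $\bigcup x = \{z\in L_\beta : \exists y\in x\; z\in y\}\in L_{\beta+1}$. \emph{$\Delta_0$-separation}: given $x\in L_\beta$ and parameters in $L_\beta$, the set $\{y\in L_\beta : y\in x \wedge P(y)\}$ is in $L_{\beta+1}$. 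By Remark \ref{remarkQB}, truth of $P$ in $L_\eta$ coincides with truth in $V$ because $P$ is $\Delta_0$ and $L_\eta$ is transitive, and the $\eval_\iota$ clause of (5) matches $\iota$ by Proposition \ref{propositionselfmeta}. \emph{Infinity}: since $\omega<\eta$, we have $\omega+1\le\eta$. One checks that $\omega = \{x\in L_\omega : x \text{ is an ordinal}\}$, with ``ordinal'' expressed by a $\Delta_0$ formula, and this set lies in $L_{\omega+1}\subseteq L_\eta$. This requires showing $L_n\cap\mathrm{Ord}=n$, which is again a minimal-counterexample argument via (6).

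The remaining axiom is \emph{set induction} (the $\Delta_0$ version suffices, and Remark \ref{remarkKP} notes that it upgrades). Given a formula $P$ of $\ST$ relativized to $L_\eta$ via Remark \ref{remarkQB}, this becomes a $\Delta_0$ formula with parameter $L_\eta$. Suppose it fails for some $x$. Consider $\min\{\alpha : \exists x\in L_{\alpha+1}\; \neg P'(x)\}$, which exists by axiom (6) because the defining formula is defined everywhere below. Then choose an $\in$-minimal counterexample inside $L_{\alpha+1}$; equivalently, one can induct on a rank term defined through $\min$. This contradicts the hypothesis. I expect this step to be the main obstacle. Axiom (6) only gives induction over ordinals, so I must convert $\in$-induction into ordinal induction. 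I would do this via $L$-rank: if $y\in x\in L_{\alpha+1}$ then $y\in L_\alpha$. The minimal counterexample at the least level then has all of its elements at lower levels, where $P'$ holds. A second induction within a level is still needed, though, and I would handle it by working with $\rho(x)=\min\{\beta : x\subseteq L_\beta\}$ in place of the levels.
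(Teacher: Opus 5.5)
Your proposal is correct and, for the only nontrivial axiom (set induction), it is essentially the paper's proof. The paper applies axiom (6) to $\neg Q$ with $Q(\alpha)\equiv\forall x\in L_\alpha\;P'(x,L_\eta)$, and uses the first clause of (5) to see that each $x\in L_\alpha$ satisfies $x\subseteq L_\beta$ for some $\beta<\alpha$; this is exactly your $\rho$-induction. (For $\Delta_0$-separation the paper unfolds $x$'s defining formula from (5) instead of using $x$ as a parameter, which is an inessential difference.)

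Your worry about needing a second induction within a level is unfounded, and you should drop the $\in$-minimal choice, since $\AA_\GDST$ only provides ordinal induction via (6). At the least bad level, every element of any counterexample already lies in a lower level, where $P'$ holds. So any counterexample at that level contradicts the hypothesis directly.
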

\begin{proof}
We show that set induction and $\Delta_0$-separation are satisfied, leaving the others as exercises. Let $P(x)$ be a formula of $\ST$ such that for all $x$, if $P(y)$ holds for all $y\in x$, then $P(x)$ holds, and let $P'(x,\MM)$ be as in Remark \ref{remarkQB}. Then $P'(x,L_\eta)\in \Delta_0(\ST)$, so $P'(x,L_\eta) \downarrow$ by axiom (1). Define $Q(\alpha) \equiv \forall x\in L_\alpha \; P'(x,L_\eta)$. Then $Q(\eta)\downarrow$ by axiom (1), so either $Q(\eta)$ holds (in which case we are done proving set induction), or $\neg Q(\eta)$ holds. In the latter case by axiom (6) we have $\alpha\downarrow$, $\neg Q(\alpha)$, and $\forall \beta < \alpha \; Q(\beta)$, where $\alpha := A_{\neg Q}$. Now take $x\in L_\alpha$. By axiom (5) there exists $\beta < \alpha$ such that $x \subseteq L_\beta$. But then since $Q(\beta)$ holds, we have $P'(y,L_\eta)$ for all $y\in x$, so by hypothesis $P'(x,L_\eta)$ holds. Since $x$ was arbitrary this implies $Q(\alpha)$, a contradiction.

To show $\Delta_0$-separation, let $P(y) \in \Delta_0(\ST)$ and let $x\in L_\eta$. By axiom (5) there exist $\alpha < \eta$, $Q(y,\zz)\in \Delta_0(\ST)$, $M\in\N$, and $\zz\in L_\alpha^M$ such that $x = \{y\in L_\alpha : \eval_\iota(Q,(y,\zz),\alpha) = \top\}$. Then $\{y\in x : P(y)\} = \{y\in L_\alpha : (\eval_\iota(Q,(y,\zz),\alpha) = \top) \wedge P(y)\} \in L_\eta$ by axioms (5) and (8).
\end{proof}

\begin{lemma}[$\AA_\GDST$]
\label{lemmaETS}
If $\eta$ is a limit ordinal greater than $\omega$, then to determine whether or not $\eta$ is admissible, it suffices to check the following alternative to $\Delta_0$-collection:
\begin{itemize}
\item \text{($\Delta_0$-collection version 2)} Given any $\Delta_0$ formula $Q(x,\beta)$ of $\ST$ and any ordinal $\alpha < \eta$, if for every $x\in L_\alpha$ there exists an ordinal $\beta < \eta$ such that $Q'(x,\beta,L_\eta)$ holds, then there exists an ordinal $\gamma < \eta$ such that for every $x\in L_\alpha$ there exists $\beta < \gamma$ such that $Q'(x,\beta,L_\eta)$ holds.
\end{itemize}
\end{lemma}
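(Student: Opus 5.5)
By the preceding proposition, $L_\eta$ already satisfies every axiom of $\KP$ except possibly $\Delta_0$-collection. So it suffices to show that version 2 implies the usual $\Delta_0$-collection in $L_\eta$. (The converse is immediate and not needed: collecting witnesses $\beta$ into a set $w\in L_\eta$ and taking $\gamma = \sup(w\cap\eta)+1$, which is below $\eta$ since $w\in L_{\delta}$ for some $\delta<\eta$, gives version 2.)

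Assume version 2. Let $P(x,y)$ be $\Delta_0$ and let $z\in L_\eta$ be such that for every $x\in z$ there is $y\in L_\eta$ with $P(x,y)$. By axiom (5) pick $\alpha<\eta$ with $z\subseteq L_\alpha$; it is convenient to take $z\in L_{\alpha}$ and work with $x\in L_\alpha$. Define the $\Delta_0$ formula
\[
Q(x,\beta,\MM) \;\equiv\; x\notin z \;\vee\; \exists y\in L_\beta\; P(x,y).
\]
The bounded quantifier $y\in L_\beta$ is handled by viewing $L_\beta$ as a parameter. Concretely, writing $Q'(x,\beta,L_\eta)$ as $\exists \ell\in L_\eta\,(\ell = L_\beta \wedge \exists y\in\ell\; P(x,y))$ uses only the fact that ``$\ell = L_\beta$'' is expressible by a formula that is $\Delta_0$ relative to $L_\eta$. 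That fact follows from axiom (5) and the $\eval_\iota$ terms of Proposition \ref{propositionselfmeta}.

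Every $y\in L_\eta$ lies in some $L_\beta$ with $\beta<\eta$, because $\eta$ is a limit. Hence for every $x\in L_\alpha$ there is $\beta<\eta$ with $Q'(x,\beta,L_\eta)$. By version 2 there is $\gamma<\eta$ such that every $x\in L_\alpha$ has some $\beta<\gamma$ with $Q'(x,\beta,L_\eta)$. Set $w = L_\gamma\in L_\eta$. For $x\in z$ we then get $y\in L_\beta\subseteq L_\gamma = w$ with $P(x,y)$, which is exactly $\Delta_0$-collection.

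The main obstacle is the technical point that the relation ``$\ell = L_\beta$'', or at least the map $\beta\mapsto L_\beta$, is available in the form the lemma requires, namely as $Q'(\cdot,L_\eta)$ for $Q\in\Delta_0(\ST)$. I would first try to absorb it by relativization: $Q'$ may quantify over $L_\eta$, so the defining condition for $L_\beta$ can be written as a bounded-in-$L_\eta$ formula using the axiom (5) characterization. If that proves awkward, I would fall back on the existing $\GDST$ term $L_\beta$ and the monotonicity of the $L$-hierarchy, since $\AA_\GDST$ itself proves $L_\beta\downarrow$ for $\beta<\eta$.
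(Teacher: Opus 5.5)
Your proposal is correct and is essentially the paper's proof. The paper uses the same auxiliary formula $Q(x,\beta) = (x\notin z)\vee(\exists y\in L_\beta\; P(x,y))$, the same observation that $\eta$ being a limit gives some $\beta<\eta$ for each $x\in L_\alpha$, and the same choice $w = L_\gamma$. It simply treats $\exists y\in L_\beta$ as a bounded quantifier over the $\GDST$ term $L_\beta$ (your fallback option) without discussing its $\Delta_0$-definability, and it omits the converse direction, which the lemma does not require.
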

\begin{proof}
Suppose $P(x,y)$ is a $\Delta_0$ formula of $\ST$, $z\in L_\eta$, and for every $x\in z$ there exists $y$ such that $P(x,y)$ holds. Since $z\in L_\eta$, there exists $\alpha < \eta$ such that $z \subseteq L_\alpha$. Let $Q(x,\beta) = (x\notin z) \vee (\exists y \in L_{\beta} \; P(x,y))$. Then since $\eta$ is a limit ordinal, for all $x\in L_\alpha$, there exists $\beta < \eta$ such that $Q(x,\beta)$ holds. By hypothesis, there exists $\gamma < \eta$ such that for all $x \in L_\alpha$, there exists $\beta < \gamma$ such that $Q(x,\beta)$ holds. The conclusion follows from letting $w = L_\gamma$.
\end{proof}

\section{A canonical interpretation of $\GDST$?}
\label{sectioncanonical}

Since the interpretation $I_\eta$ of $\GDST$ depends on a choice of ordinal $\eta$, it is natural to ask whether there is any specific ordinal with respect to which it is most natural to interpret $\GDST$. Such a choice should be made in accordance with the principle that every mathematical object is describable, but now by ``describable'' we mean ``is the image of a term in the language $\GDST$ under the canonical interpretation''. To this end we make the following definition, which we state in greater generality than immediately necessary for use in the sequel.

\begin{definition}
For each ordinal $\eta$ and for each set $z$, let
\[
f_z(\eta) = \min\{\alpha : \text{there do not exist an ordinal term $A(\ww)$ and $\ww \in z^* := \bigcup_{M = 0}^\infty z^M$ such that $\alpha = j_\eta(A[\ww])$}\}.
\]
We say that $\eta$ is \emph{$z$-descriptionalist} if $f_z(\eta) = \eta$. The following is an immediate consequence of this definition:
\end{definition}

\begin{observation}
\label{observationexistsA}
Fix a $z$-descriptionalist $\theta$. For all $\alpha < \theta$ there exists an ordinal term $A(\ww)$ and $\ww\in z^*$ such that $\alpha = j_{\theta}(A[\ww])$.
\end{observation}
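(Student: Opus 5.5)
This follows by unwinding the definition of $f_z$ as a minimum. Let $S_z(\eta)$ denote the class of ordinals $\alpha$ for which there exist an ordinal term $A(\ww)$ and $\ww\in z^*$ with $\alpha = j_\eta(A[\ww])$. By definition, $f_z(\eta)$ is the least ordinal not in $S_z(\eta)$, and this least ordinal exists. Indeed, $j_\eta$ takes values in $\eta\cup\{U\}$, so $S_z(\eta)\subseteq\eta$. Hence $\eta$ itself is not in $S_z(\eta)$, and the minimum is at most $\eta$. Since $S_z(\eta)$ is the image of a set (terms times $z^*$) under the map $j_\eta$, it is a set, and the minimum is well defined in $\KP$.

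The key step is the defining property of the least element. If $\alpha < f_z(\eta)$, then $\alpha$ is not an ordinal outside $S_z(\eta)$, since that would contradict minimality. So $\alpha\in S_z(\eta)$. Now apply this with $\eta = \theta$. The hypothesis that $\theta$ is $z$-descriptionalist gives $f_z(\theta) = \theta$. Therefore every $\alpha < \theta$ lies in $S_z(\theta)$. That is, there exist an ordinal term $A(\ww)$ and $\ww\in z^*$ with $\alpha = j_\theta(A[\ww])$, which is exactly the claim.

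The only point needing any care is that $f_z(\theta)$ is a genuine minimum of a set of ordinals, so that the least-element property applies. This is justified above by the bound $f_z(\theta)\le\theta$. I expect no real obstacle: the observation is immediate from the definitions.
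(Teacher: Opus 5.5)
Your proof is correct and takes the same approach as the paper, which states this observation as an immediate consequence of the definition of $f_z$ as a minimum. Your side remark that the minimum exists because $f_z(\eta)\le\eta$ is what the paper records separately in Proposition \ref{propositionfz}.
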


\begin{corollary}
\label{corollaryexistsX}
Fix a $z$-descriptionalist $\theta$. For all $\alpha \leq \theta$ and $x\in L_\alpha$ there exist a set term $X(\ww)$ and $\ww\in z^*$ such that $x = j'_{\theta}(X[\ww])$.
\end{corollary}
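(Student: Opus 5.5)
Given $\alpha \le \theta$ and $x \in L_\alpha$, we have $x \in L_{\alpha'+1}$ for some $\alpha' < \alpha$, since $L_\alpha = \bigcup_{\beta<\alpha} L_{\beta+1}$ at limits and $L_{\beta+1}$ is the successor step. So it suffices to describe every element of $L_{\beta+1}$ for every $\beta<\theta$, and the plan is to induct on $\beta$. By axiom (5) of the definition of $L$, each $x\in L_{\beta+1}$ has the form
\[
x = \{y\in L_\beta : \eval_\iota(P,(y,\zz),\beta) = \top\}
\]
for some $P\in\Delta_0(\ST)$ and parameters $\zz\in L_\beta^M$. By Observation \ref{observationexistsA}, since $\beta<\theta$ there is an ordinal term $B(\ww_0)$ with $\ww_0\in z^*$ and $j_\theta(B[\ww_0])=\beta$, so $L_B$ is a set term whose value under $j'_\theta$ is $L_\beta$.

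The parameters $\zz=(z_1,\dots,z_M)$ lie in $L_\beta$, so by the induction hypothesis (applied at stages below $\beta$, each $z_i$ lying in some $L_{\beta'+1}$ with $\beta'<\beta$) there are set terms $Z_i(\ww_i)$ with $j'_\theta(Z_i[\ww_i])=z_i$. Since $P$ is $\Delta_0$, it is literally a formula of $\GDST$ built from constructors (1)--(3) (Remark \ref{remarkST}), and by Observation \ref{observationignoreconstructor} its value is always defined and independent of the ambient ordinal. I would then take
\[
X(\ww) := \{y\in L_{B(\ww_0)} : P(y, Z_1(\ww_1),\dots,Z_M(\ww_M))\},
\]
with $\ww$ the concatenation of $\ww_0,\ldots,\ww_M$. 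Its value under $j'_\theta$ is $x$ by the clause for constructor (4). The value is not $U$, because every component term is defined and $P$ with substituted defined terms is total on $L_\beta$; the latter needs a short induction on the structure of $P$ showing that substituting defined terms into a constructor-(6)-free formula yields a total formula.

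I expect the main obstacle to be bookkeeping rather than ideas. First, the structure of the induction: it must run on $\beta$, with $\alpha\le\theta$ handled by the reduction to successor stages, and the case $\alpha=\theta$ is covered because every $x\in L_\theta$ lies in some $L_{\beta+1}$ with $\beta<\theta$. Second, one must check that in $\GDST$ terms may be substituted for free set variables, as in rule (3), $P(x), X\downarrow \Rightarrow P(X)$. If one prefers to avoid term substitution, an alternative is to describe the tuple $\zz$ itself as one set term using Kuratowski pairs built by separation from some $L_{B'}$, and then decode it inside $P$ with $\Delta_0$ formulas. The base case is trivial: $L_0=\emptyset$ has no elements, so there is nothing to describe.
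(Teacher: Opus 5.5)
Your proof is correct and follows the paper's argument. You induct through successor stages, name $\beta$ via Observation \ref{observationexistsA}, name the parameters by the induction hypothesis, and form the separation term $\{y\in L_B : P(\ldots)\}$ with the concatenated parameter tuple. The only difference is cosmetic: the paper lets $P$ also take the ordinal $\alpha$ (hence $L_\alpha$) as an argument, which covers sets defined over $L_\alpha$ by unrestricted first-order formulas, whereas you rely on the $\Delta_0$ form of axiom (5); to match the usual definition of $L$, simply add $L_B$ as an extra argument to $P$ via Remark \ref{remarkQB}.
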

\begin{proof}
By induction on $\alpha$. The case where $\alpha$ is zero or a limit ordinal is clear. Suppose the statement is true for $\alpha < \theta$, and we will show it is true for $\alpha + 1$. If $x\in L_{\alpha + 1}$, then there exist $x_1,\ldots,x_M \in L_\alpha$ and $P \in \FF_\GDST(M+1,1)$ defined using no instances of constructor (6) such that $x = j'_\theta(\{y\in L_\alpha : P[x_1,\ldots,x_M,y,\alpha]\})$. By the induction hypothesis there exist terms $X_1,\ldots,X_M$ such that $x_m = j'_{\theta}(X_m[\ww_m])$ for $m = 1,\ldots,M$. By Observation \ref{observationexistsA} there exists a term $A$ such that $\alpha = j_{\theta}(A[\ww_0])$. Thus, $x = j'_{\theta}(\{y\in L_A : P(X_1[\ww_1],\ldots,X_M[\ww_M],y,A[\ww_0])\})$.
\end{proof}

Letting $\alpha = \theta$ gives:

\begin{corollary}
\label{corollaryexistsX2}
Fix a $z$-descriptionalist $\theta$. For all $x\in L_{\theta}$ there exist a set term $X$ and $\ww\in z^*$ such that $x = j'_{\theta}(X[\ww])$.
\end{corollary}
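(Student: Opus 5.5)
The plan is to obtain this directly from Corollary \ref{corollaryexistsX} with $\alpha = \theta$, as the sentence preceding the statement indicates. Corollary \ref{corollaryexistsX} covers every $\alpha \leq \theta$, so $\alpha = \theta$ is allowed. For that value its conclusion is exactly the present claim: every $x\in L_\theta$ equals $j'_\theta(X[\ww])$ for some set term $X$ and some $\ww\in z^*$.

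The only point that needs checking is that the induction in Corollary \ref{corollaryexistsX} actually reaches $\alpha = \theta$, not just the successor steps below $\theta$. I would handle it as follows.

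\begin{itemize}
\item If $\theta$ is a successor, say $\theta = \alpha_0 + 1$ with $\alpha_0 < \theta$, the successor step of that proof applies verbatim. It uses Observation \ref{observationexistsA} to name $\alpha_0$ by an ordinal term $A$, and the induction hypothesis to name the finitely many parameters $x_1,\ldots,x_M \in L_{\alpha_0}$ by set terms.
\item If $\theta$ is a limit (or $0$), we use $L_\theta = \bigcup_{\alpha<\theta} L_\alpha$. Any $x\in L_\theta$ lies in some $L_\alpha$ with $\alpha<\theta$, and the case $\alpha<\theta$ already supplies $X$ and $\ww$ with $x = j'_\theta(X[\ww])$. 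No new term is needed, because the interpretation is always taken at the fixed ordinal $\theta$.
\end{itemize}

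The main (mild) obstacle is whether $j'_\theta(\{y\in L_A : P(\ldots)\})$ is defined at all. Its value must be an element of $L_\theta$, not $U$. This holds because $P$ contains no instance of constructor (6), so by Observation \ref{observationignoreconstructor} its truth values are determined. The only remaining requirement is that $j'_\theta(L_A) = L_{\alpha}$ be an element of $L_\theta$, which holds since $\alpha < \theta$. With that confirmed, the corollary follows immediately.
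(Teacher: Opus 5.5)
Your proposal is correct and is exactly the paper's argument: the statement is Corollary \ref{corollaryexistsX} specialized to $\alpha=\theta$. Your extra check that the induction there really reaches $\alpha=\theta$ is consistent with the paper's proof of that corollary and is harmless, just redundant. That check uses the successor step with Observation \ref{observationexistsA} naming the predecessor, or $L_\theta=\bigcup_{\alpha<\theta}L_\alpha$ in the limit case.
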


Letting $z = \emptyset$, Corollary \ref{corollaryexistsX} can be interpreted as meaning that if $\theta$ is $\emptyset$-descriptionalist, then $L_{\theta}$ is a model of $\GDST$ every element of which is describable, since $X$ is a description of $x = j_{\theta}'(X)$.

\begin{proposition}
\label{propositionfz}
$f_z$ is weakly increasing, $f_z(\eta) \leq \eta$ (and in particular $f_z(\eta)$ exists) for all $\eta$, and if $z$ is countable then $f_z(\eta) $ is countable for all $\eta$.
\end{proposition}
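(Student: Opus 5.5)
The plan is to prove the three claims separately, each directly from the definition of $f_z$ together with Observation \ref{observationmonotone}.

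\emph{$f_z(\eta)\le\eta$.} Every value $j_\eta(A[\ww])$ that is not $U$ lies in $\eta$, since $j_\eta$ maps into $\eta\cup\{U\}$. So the ordinal $\eta$ itself is never of the form $j_\eta(A[\ww])$. The defining set of $f_z(\eta)$ therefore contains $\eta$, hence is a nonempty class of ordinals, and its minimum exists and is at most $\eta$. One small point: this needs $\ww\in z^*$ to be an admissible argument tuple for $j_\eta$. I would read the definition as ranging only over $\ww$ for which $j_\eta(A[\ww])$ makes sense, i.e.\ $\ww\in (z^*)\cap(L_\eta^M\times\eta^N)$. Under any reasonable convention, tuples outside the domain simply contribute nothing.

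\emph{Monotonicity.} Let $\eta<\tau$ and put $\alpha=f_z(\eta)$. For every $\beta<\alpha$ there are $A,\ww$ with $j_\eta(A[\ww])=\beta\ne U$. By Observation \ref{observationmonotone}, $j_\tau(A[\ww])=\beta$ as well, and $\ww$ remains an admissible argument at $\tau$ since $L_\eta\subseteq L_\tau$. Hence every $\beta<\alpha$ is describable at $\tau$, and $f_z(\tau)\ge\alpha=f_z(\eta)$.

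\emph{Countability.} Suppose $z$ is countable. Then $z^*=\bigcup_M z^M$ is countable, being a countable union of finite powers of a countable set. The set of ordinal terms $A$ of $\GDST$ is countable, because terms are finite strings over a countable alphabet (variables indexed by $\N$). Consequently the set $D=\{j_\eta(A[\ww])\}\setminus\{U\}$ of describable ordinals is a countable set of ordinals. Since $f_z(\eta)$ is the least ordinal not in $D$, every ordinal below $f_z(\eta)$ lies in $D$. Thus $f_z(\eta)\subseteq D$ is countable.

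The main obstacle is simply working in the metatheory $\KP$. The countability argument needs the map $(A,\ww)\mapsto j_\eta(A[\ww])$ to be a set function, defined by $\Sigma$-recursion from the recursive definition of $I_\eta$. It also needs the notion of ``countable'' (existence of a surjection from $\N$) to transfer along images and unions. I would handle this by composing a surjection $\N\to\TT_\GDST\times z^*$ with $j_\eta$, discarding the values $U$, and noting that $f_z(\eta)$ is contained in the image.
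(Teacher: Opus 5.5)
Your proposal is correct and follows the paper's proof essentially line by line. You get $f_z(\eta)\le\eta$ because $j_\eta$ takes values in $\eta\cup\{U\}$, monotonicity from Observation \ref{observationmonotone}, and countability because $f_z(\eta)$ is covered by the image of the countable set $\bigcup_M \TT_\GDST(M,0)\times z^M$ under $(A,\ww)\mapsto j_\eta(A[\ww])$. Your formulation $f_z(\eta)\subseteq D$ is slightly more careful than the paper's wording, which calls this map a surjection onto $f_z(\eta)$ even though its image may be larger.
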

\begin{proof}
The fact that $f_z$ is weakly increasing follows directly from Observation \ref{observationmonotone}. Since $j_\eta(A[\ww]) < \eta$ or $j_\eta(A[\ww]) = U$ for all $A,\ww$, we get $f_z(\eta) \leq \eta$ for all $\eta$. Finally, $(A,\ww)\mapsto j_\eta(A[\ww])$ is a surjection from the countable set $\bigcup_{M = 0}^\infty \TT_\GDST(M,0)\times z^M$ to $f_z(\eta)$, implying that $f_z(\eta)$ is countable.
\end{proof}

\begin{corollary}
\label{corollarythetaz}
If $\{\eta : f_z(\eta) = \eta\}$ is a set (rather than a proper class), then it has a maximal element, which we denote by $\theta_z$. In particular, this holds if $z$ is countable and there exists an uncountable ordinal.
%Fix a countable set $z$. If there exists an uncountable ordinal, then there exists a maximal ordinal $\theta_z$ such that $f_z(\theta_z) = \theta_z$. Moreover, there exists an integer $K = K_z \geq 1$ such that for all $\eta\geq \theta_z$ we have $f_z^K(\eta) = \theta_z$.
\end{corollary}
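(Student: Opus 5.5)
The proof has two parts: the set $D_z := \{\eta : f_z(\eta) = \eta\}$ has a supremum in $D_z$, and $D_z$ is bounded when $z$ is countable and an uncountable ordinal exists.

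\textbf{A set $D_z$ has a maximum.} Assume $D_z$ is a set and let $\sigma = \sup D_z$. If $\sigma \in D_z$ we are done, so the case to handle is $\sigma \notin D_z$. Then $D_z$ has no largest element, so $\sigma$ is a limit of elements of $D_z$. By Proposition \ref{propositionfz}, $f_z(\sigma) \leq \sigma$. Monotonicity of $f_z$ (also Proposition \ref{propositionfz}) gives $f_z(\sigma) \geq f_z(\eta) = \eta$ for every $\eta \in D_z$. Taking the supremum over $\eta \in D_z$ yields $f_z(\sigma) \geq \sigma$. Hence $f_z(\sigma) = \sigma$, so $\sigma \in D_z$, and $\sigma$ is the maximal element $\theta_z$.

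We must also rule out $D_z = \emptyset$, since the statement asserts a maximal element exists. Here $f_z(0) = 0$: the defining set is the set of ordinals not of the form $j_0(A[\ww])$, and every $j_0(A[\ww])$ equals $U$, so $0$ belongs to it. Thus $0 \in D_z$ and $D_z$ is nonempty. Alternatively, one can read the statement as concerning nonempty $D_z$.

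\textbf{Countable $z$ gives a bounded $D_z$.} Suppose $z$ is countable and $\omega_1$ exists. By Proposition \ref{propositionfz}, $f_z(\eta)$ is countable for every $\eta$. So every $\eta \in D_z$ satisfies $\eta = f_z(\eta) < \omega_1$, i.e.\ $D_z \subseteq \omega_1$. To see that $D_z$ is a set, I would apply separation inside $\omega_1$, using that $f_z(\eta)$ is a definable function of $\eta$.

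\textbf{Main obstacle.} The delicate step is exactly this definability in the metatheory $\KP$. We need $\eta \mapsto f_z(\eta)$ to be $\Delta_1$-definable, since the interpretation $I_\eta$ is defined by a recursion that is $\Sigma_1$ in $\eta$. With that in hand, $D_z$ is $\Delta_1$-separable from $\omega_1$, and the supremum $\bigcup D_z$ exists as a set by Union.

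If $\Delta_1$-separation is unavailable, I would instead state the conclusion hypothetically ("if it is a set"), which is how the first sentence of the corollary is phrased. The second sentence then only needs $D_z \subseteq \omega_1$ together with the standard fact that $\Delta_1$-definable subclasses of sets are sets in $\KP$.
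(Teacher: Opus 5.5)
Your proof is correct and follows essentially the same route as the paper. You set $\theta_z = \sup D_z$ and squeeze $f_z(\theta_z)$ between $\sup_{\eta\in D_z} f_z(\eta) = \theta_z$ and $\theta_z$, using monotonicity and $f_z(\eta)\le\eta$ from Proposition \ref{propositionfz}, then bound $D_z \subseteq \omega_1$ via countability of $f_z(\eta)$. Your extra remarks are sound and fill in details the paper leaves implicit: nonemptiness via $f_z(0)=0$, the case split avoiding the paper's tacit assumption that $\theta_z\notin D_z$, and $\Delta_1$-separation in $\KP$ to make $D_z$ a set.
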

\begin{proof}
Indeed, let $\theta_z = \sup\{\eta : f_z(\eta) = \eta\}$. Then by Proposition \ref{propositionfz}
\[
\theta_z = \sup_{\substack{\eta < \theta_z \\ f_z(\eta) = \eta}} \eta = \sup_{\substack{\eta < \theta_z \\ f_z(\eta) = \eta}} f_z(\eta) \leq f_z(\theta_z) \leq \theta_z.
\]
If $z$ is countable and there exists an uncountable ordinal, then $\{\eta : f_z(\eta) = \eta\} \subseteq \omega_1$, where $\omega_1$ is the first uncountable ordinal.
%Let $\omega_1$ be the first uncountable ordinal. Then $f_z(\eta) < \omega_1$ for all $\eta$. Since $f_z$ is monotone, it follows that $f_z^K(\eta) \leq f_z^{K - 1}(\omega_1)$ for all $\eta$ and $K\geq 1$. On the other hand, since $f_z(\eta) \leq \eta$ for all $\eta$, the sequence $(f_z^K(\omega_1))_K$ is a weakly decreasing sequence of ordinals and therefore eventually constant, say $\theta_z := f_z^K(\omega_1) = f_z^{K - 1}(\omega_1)$. Then $\theta_z = f_z^K(\theta_z) \leq f_z^K(\eta) \leq f_z^{K - 1}(\omega_1) = \theta_z$ for all $\eta \geq \theta_z$.
\end{proof}

%\begin{conjecture}
%$\KP + \omega_1$ (or a more powerful system) implies $K_\smallemptyset = 1$.
%\end{conjecture}

\begin{remark}
\label{remarktheta0}
If the hypothesis of Corollary \ref{corollarythetaz} holds for $z = \emptyset$, then $I_{\theta_\smallemptyset}$ is a natural choice for the ``canonical interpretation'' of $\GDST$. By contrast, if the hypothesis fails, then Observation \ref{observationmonotone} shows that there is a limiting interpretation $I = \lim_{\eta\to\infty} I_\eta$ which is descriptionalist in the sense of Corollary \ref{corollaryexistsX2}.

The philosophical content of the dichotomy observed in Corollary \ref{corollarythetaz} requires some explanation. We are using $\KP$ as our metatheory, but in some ways this is a bad choice because $\KP$ does not have a canonical model, and so the truth values of statements like the hypothesis of Corollary \ref{corollarythetaz} depend on the choice of model. (``The entire universe of sets'' is not a good answer to ``what is the canonical model'' from a gradualist point of view, since the universe is always being expanded.)

Thus, $L_{\theta_\smallemptyset}$ is only a ``canonical model'' of $\GDST$ in the sense that the universe of sets is a ``canonical model'' of $\KP$, which is to say, not quite. The following remarks may help illustrate this point:
\begin{itemize}
\item $\theta_\smallemptyset$ is defined as being the largest $\emptyset$-descriptionalist ordinal. The concept of an ordinal being $\emptyset$-descriptionalist is independent of the choice of model as long as we restrict to models of the form $L_\alpha$, but the concept of a ``largest'' such ordinal is not, as it requires quantifying over all ordinals greater than $\theta_\smallemptyset$. The class of ordinals greater than $\theta_\smallemptyset$ is a proper class and is therefore always growing, meaning that quantification over it is not necessarily well-defined.
\item If we assume that there exists an uncountable ordinal, then $\theta_\smallemptyset$ can alternately be defined as the largest $\emptyset$-descriptionalist ordinal less than $\omega_1$, where $\omega_1$ is the smallest uncountable ordinal. This gets rid of the quantification over the proper class of ordinals greater than $\theta_\smallemptyset$, but it introduces a new improper quantification, since an ordinal is defined to be ``uncountable'' if there is no bijection between it an $\N$, requiring us to quantify over the class of bijections, which is again a proper class. Put more concretely, the issue is that an ordinal $\alpha$ may be uncountable with respect to the model $L_\beta$ of $\KP$ for some $\beta > \alpha$, but then countable again with respect to some larger model $L_\gamma$ with $\gamma > \beta$.
\end{itemize}
Nonetheless, we can think of the map
\begin{equation}
\label{canonicaldef}
g:V\mapsto \begin{cases}
L_{\theta_\smallemptyset} & \text{ if $\{\eta : f_\smallemptyset(\eta) = \eta\}$ is a set}\\
L & \text{ otherwise}
\end{cases}
\end{equation}
as a canonical map that sends models of $\KP$ to models of $\GDST$.
\end{remark}

\begin{proposition}
The map \eqref{canonicaldef} is idempotent i.e. satisfies $g^2 = g$.
\end{proposition}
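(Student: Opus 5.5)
We need $g(g(V)) = g(V)$ in both cases of \eqref{canonicaldef}. The key observation is that the notion ``$\eta$ is $\emptyset$-descriptionalist'' is absolute between models of the form $L_\alpha$ (and $V$) containing $\eta$. Indeed, $f_\emptyset(\eta)$ is computed from the maps $j_\eta$, which are defined by a recursion over $L_\eta$ and depend only on $L_\eta$. So the class $D=\{\eta : f_\emptyset(\eta)=\eta\}$, computed inside a model $L_\beta$ (or $L$), is just $D\cap\beta$ (resp.\ $D$). I would state this as a preliminary lemma, justified by the fact that the recursive definition of $(\iota_\eta,j_\eta,j'_\eta)$ is $\Delta_1$ in $\KP$ and hence absolute for transitive models containing $\eta$.

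\emph{Case 1:} $D$ is a set in $V$, so $g(V)=L_{\theta}$ with $\theta=\theta_\emptyset=\max D$ (Corollary \ref{corollarythetaz}). We now evaluate $g$ inside $L_\theta$. By the lemma, the class $D$ as computed in $L_\theta$ equals $D\cap\theta$.
\begin{itemize}
\item If $D\cap\theta$ is a proper class in $L_\theta$, i.e.\ it is unbounded in $\theta$, then $g(L_\theta)=L$ as computed in $L_\theta$, which is $L_\theta$ itself. This is the desired value.
\item Otherwise $D\cap\theta$ is bounded below $\theta$, so $L_\theta$ would assign the canonical model $L_{\theta'}$ with $\theta'=\max(D\cap\theta)<\theta$. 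This would break idempotence, so this subcase must be ruled out.
\end{itemize}

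To rule out the second subcase, I would show that $D$ is closed and that its maximum $\theta$ is a limit point of $D$. Closedness comes from the sup computation in the proof of Corollary \ref{corollarythetaz}. For the limit-point property I would argue as follows. Suppose $\theta'=\max(D\cap\theta)<\theta$. Since $\theta'$ is describable at $\theta$ (being the largest descriptionalist ordinal below $\theta$ is expressible via Corollary \ref{corollaryselfmeta}), one would try to show that $\theta$ itself has a description in $I_\tau$ for $\tau$ slightly larger, contradicting maximality. Alternatively, if $L_\theta$ sees a largest element of $D$ below it, the stage $\theta'$ is definable from within $L_\theta$. One can then attempt to describe all ordinals in $[\theta',\theta)$ by terms at some $\tau>\theta$ and produce a larger fixed point.

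\textbf{This is the main obstacle.} I expect it to require a genuine argument: possibly showing that $\theta$ is a limit of $\emptyset$-descriptionalist ordinals, or that the paper intends that $g$ applied to $L_\theta$ uses $\theta$ itself. If this cannot be established, I would fall back on interpreting ``is a set'' inside $L_\theta$ as ``bounded in $\theta$'' and checking it directly.

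\emph{Case 2:} $D$ is a proper class, so $g(V)=L$. Computed in $L$, $D$ is unchanged by absoluteness and is still unbounded, so $g(L)=L$. This case is routine.
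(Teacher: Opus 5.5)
Your reduction is correct and matches the paper's structure. Absoluteness of ``$f_\emptyset(\eta)=\eta$'' handles the proper-class case. In the set case everything reduces to showing that $D\cap\theta_\emptyset$ is unbounded in $\theta_\emptyset$, so that $L_{\theta_\emptyset}$ sees $D$ as a proper class. But that unboundedness is the whole content of the proposition, and you do not prove it. You label it ``the main obstacle,'' and your fallback (interpret ``is a set'' in $L_{\theta_\emptyset}$ as ``bounded in $\theta_\emptyset$'' and check it directly) only restates what must be shown. Your alternative route also aims at the wrong target. The ordinals in $[\theta',\theta_\emptyset)$ are already describable at level $\theta_\emptyset$, so describing them again produces nothing; the only ordinal whose describability is at stake is $\theta_\emptyset$ itself. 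Likewise, $\theta'$ is describable simply because it lies below the $\emptyset$-descriptionalist ordinal $\theta_\emptyset$; no appeal to Corollary \ref{corollaryselfmeta} is needed.

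The missing step is short, and it is exactly the paper's argument.
\begin{itemize}
\item Maximality gives $f_\emptyset(\theta_\emptyset+1)=\theta_\emptyset$, since $f_\emptyset(\theta_\emptyset+1)\ge f_\emptyset(\theta_\emptyset)=\theta_\emptyset$ and $\theta_\emptyset+1\notin D$. So no parameter-free term denotes $\theta_\emptyset$ at level $\theta_\emptyset+1$.
\item Suppose $D\cap\theta_\emptyset\subseteq\alpha$ for some $\alpha<\theta_\emptyset$. By Observation \ref{observationexistsA} pick a parameter-free term $A$ with $j_{\theta_\emptyset}(A)=\alpha$; by Observation \ref{observationmonotone} also $j_{\theta_\emptyset+1}(A)=\alpha$.
\item Let $F_\emptyset$ be a term representing $f_\emptyset$, granting, as the paper does, that $F_\emptyset(\beta)=\beta$ has a definite truth value at this level for $\beta\le\theta_\emptyset$. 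Then the parameter-free term $\min\{\beta>A : F_\emptyset(\beta)=\beta\}$ evaluates at level $\theta_\emptyset+1$ to the least element of $D$ above $\alpha$, namely $\theta_\emptyset$. This contradicts the first point.
\end{itemize}
Hence for every $\alpha<\theta_\emptyset$ there is $\beta\in(\alpha,\theta_\emptyset)$ with $f_\emptyset(\beta)=\beta$, which gives $g(L_{\theta_\emptyset})=L_{\theta_\emptyset}$. Your sentence about finding a description of $\theta$ in $I_\tau$ for $\tau$ slightly larger points this way. Without the describable bound $A$, the $\min$ term, and the identity $f_\emptyset(\theta_\emptyset+1)=\theta_\emptyset$, though, the proof is not there.
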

\begin{proof}
If $\{\eta : f_\smallemptyset(\eta) = \eta\}$ is a proper class with respect to $V$, then it is also a proper class with respect to $L$. Conversely, suppose that $\{\eta : f_\smallemptyset(\eta) = \eta\}$ is a set and thus $\theta_\smallemptyset$ exists. For all $\alpha < \theta_\smallemptyset$ we have $\theta_\smallemptyset \neq j_{\theta_\smallemptyset + 1}(\min\{\beta > [\alpha] : F_\smallemptyset(\beta) = \beta\})$, where $F_\smallemptyset(\beta)$ is a term representing $f_\smallemptyset(\beta)$. So there exists $\beta \in (\alpha,\theta_\smallemptyset)$ such that $f_\smallemptyset(\beta) = \beta$. Since $\alpha$ was arbitrary, this shows that $\{\eta : f_\smallemptyset(\eta) = \eta\}$ is a proper class with respect to $L_{\theta_\smallemptyset}$.
\end{proof}

We may now describe how $\GDST$ would answer the two questions posed in the introduction: for any $\eta < \theta_\smallemptyset$ all elements of $L_\eta$ can be thought of as existing simultaneously, but the limit $L_{\theta_\smallemptyset}$ is never reached. Similarly, for $\eta < \theta_\smallemptyset$ and any $\Delta_0$ formula $P(L)$ of $\ST$ the formula $P(L_\eta)$ has a definite truth value (since we can write $\eta = j_{\theta_\smallemptyset}(A)$ for some term $A$ of $\GDST$, and then $\iota_{\theta_\smallemptyset}(P(A)) \in \{\top,\bot\}$).

An alternative the idea that \eqref{canonicaldef} is the ``canonical model'' of $\GDST$ is that for every subset $w$ of the natural numbers, the largest $\{w\}$-descriptionalist ordinal $\theta_{\{w\}}$ is a natural model of $\GDST$ that can be thought of as encoding the information contained in $w$. This turns out to be equivalent to the idea that all countable ordinals are natural models of $\GDST$:

\begin{proposition}
\label{propositionalphathetaw}
For every countable ordinal $\alpha$ there exists $w\subseteq \N$ such that $\alpha < \theta_{\{w\}}$.
\end{proposition}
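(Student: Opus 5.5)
Given a countable ordinal $\alpha$, I will take $w\subseteq\N$ to code a well-ordering of $\N$ of order type $\alpha$. Concretely, fix a bijection $e:\N\to\alpha$ (or onto $\alpha+1$ if $\alpha$ is finite) and a pairing function $\langle\cdot,\cdot\rangle$, and let $w=\{\langle m,n\rangle : e(m)<e(n)\}$. The plan is to find an ordinal $\eta>\alpha$ with $f_{\{w\}}(\eta)=\eta$. Since $w$ is countable, Corollary \ref{corollarythetaz} then applies (granting an uncountable ordinal; otherwise the class of fixed points is proper, which is even better), and it gives $\alpha<\eta\le\theta_{\{w\}}$.

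\textbf{Step 1.} Show that every $\beta\le\alpha$ is $\{w\}$-describable at all sufficiently large $\eta$. Using $w$ as a parameter, I write a term $R(n)$ giving the rank of $n$ in the order coded by $w$:
\[
R(n)=\min\{\beta : \exists f\in L_{\beta+\omega}\ (f \text{ is an order-isomorphism from } w\!\restriction\! \mathrm{pred}(n) \text{ onto } \beta)\}.
\]
More safely, I would define the collapse map $m\mapsto e(m)$ by a $\min$-recursion along $w$, using the self-evaluation terms $\eval$ from Proposition \ref{propositionselfmeta} to carry out the recursion. Each natural number is a closed term, so $R(n)$ is a term with parameter $w$. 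Whenever $\eta$ is large enough that the relevant isomorphisms lie in $L_\eta$, we get $j_\eta(R[n,w])=e(n)$. This gives $f_{\{w\}}(\eta)>\alpha$ for all sufficiently large $\eta$, say for all $\eta\ge\eta_0$.

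\textbf{Step 2.} Produce a fixed point above $\alpha$. Start from $\eta_1=\omega_1$, or any $\eta\ge\eta_0$, and iterate $\eta_{k+1}=f_{\{w\}}(\eta_k)$. By Proposition \ref{propositionfz} the sequence $(\eta_k)$ is weakly decreasing, so it stabilizes at some $\eta^*$ with $f_{\{w\}}(\eta^*)=\eta^*$. The main obstacle is to make sure the iteration never drops to or below $\alpha$, because once $\eta_k\le\eta_0$ Step 1 no longer applies. To handle this, I will use the supremum argument of Corollary \ref{corollarythetaz} rather than naive iteration. Let $S=\{\eta : f_{\{w\}}(\eta)=\eta\}$. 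It suffices to show $S\not\subseteq\alpha+1$.

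Suppose instead that $\theta:=\theta_{\{w\}}\le\alpha$. Then $\theta$ is itself describable: it equals $j_\eta(\min\{\beta: F(\beta)=\beta \wedge \forall\gamma<R^*\,(\gamma>\beta\to F(\gamma)\neq\gamma)\})$ for large $\eta$, where $F$ is a term for $f_{\{w\}}$ (as in the proof of idempotence of \eqref{canonicaldef}) and $R^*$ is a term for $\alpha+1$ built in Step 1. This description alone does not give a contradiction, so I would instead show directly that $f_{\{w\}}(\eta)$ is a limit of describable ordinals for suitable $\eta$. The idea is to compare $\eta_0$ with $\delta:=f_{\{w\}}(\eta_0)>\alpha$. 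Let $\beta<\delta$, with $\beta=j_{\eta_0}(A[w])$. If $\delta$ itself were not a fixed point, then $f(\delta)<\delta$. I then apply Step 1 relativized: the ordinals $\le\alpha$ are already described at stage $\delta$, provided the order-isomorphisms witnessing them appear in $L_\delta$. This holds if $\eta_0$ is chosen minimal, namely $\eta_0=$ the least $\eta$ such that all collapses lie in $L_\eta$. That least $\eta$ is itself described by a term $\min\{\eta : \ldots\}$ in $w$, so $\eta_0<\delta$, and every $\beta<\eta_0$ is describable at stage $\delta$.

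\textbf{Step 3.} Iterate the argument of Step 2 downward to get $f(\eta_k)\ge\eta_0>\alpha$ for all $k$, so the limit fixed point $\eta^*$ satisfies $\eta^*\ge\eta_0>\alpha$, which finishes the proof. The delicate point, and where the full set-induction schema of $\KP$ noted in Remark \ref{remarkKP} is needed, is verifying inside $\KP$ that the collapse of $w$ exists and lies in $L_{\eta_0}$ with $\eta_0$ a successor of a describable stage. This is the Mostowski collapse argument, and it requires induction over $\Sigma_1$ formulas.
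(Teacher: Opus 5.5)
Your Step 1 is plausible as far as it goes. But it only yields $f_{\{w\}}(\eta)>\alpha$ for all $\eta\ge\eta_0$. A fixed point requires a \emph{single} stage $\eta>\alpha$ at which every ordinal below $\eta$ is describable from $w$ at that same stage. Step 1 controls only the ordinals $\le\alpha$, and only at stages $\ge\eta_0$. The ordinals in $(\alpha,\eta_0)$ are not controlled at all by your choice of $w$.

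The descending iteration does stabilize at a fixed point, but nothing keeps it above $\alpha$. As soon as some $\eta_k$ drops below $\eta_0$, the collapse of $w$ is not in $L_{\eta_k}$ (by the very minimality of $\eta_0$ you invoke), so Step 1 says nothing about stage $\eta_k$.

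Your repair does not close this gap.
\begin{itemize}
\item By Proposition \ref{propositionfz}, $\delta=f_{\{w\}}(\eta_0)\le\eta_0$, so ``$\eta_0<\delta$'' is impossible.
\item Step 1 applies at stage $\delta$ only if $\delta=\eta_0$, i.e.\ only if $\eta_0$ is already a fixed point. That is what you are trying to prove.
\item The fact that $\eta_0$ has a description gives $\eta_0<f_{\{w\}}(\eta)$ for no $\eta$. Such a description can in any case only be evaluated at stages $>\eta_0$, since $j_\eta$ takes values in $\eta$. The value $f_{\{w\}}(\eta)$ is the least \emph{undescribable} ordinal, and it may lie anywhere in $(\alpha,\eta_0)$.
\end{itemize}
Step 3 rests on this inference and is therefore unsupported.

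What is missing is a choice of $w$ that makes one explicit stage a fixed point, so that no descending iteration is needed. The paper gets this by induction on $\alpha$:
\begin{itemize}
\item write $\alpha=\{\beta_n:n\in\N\}$;
\item take reals $w_n$ with $\beta_n<\theta_{\{w_n\}}$ and terms $A_n$ with $\beta_n=j_{\theta_{\{w_n\}}}(A_n[w_n])$;
\item pack all the pairs $(w_n,A_n)$ into a single real $w$;
\item describe $\alpha$ itself as the least ordinal not of the form $\overline\eval_j$ applied to a coded pair.
\end{itemize}
This is meant to give $f_{\{w\}}(\alpha+1)=\alpha+1$ outright. In the paper, the full induction schema is needed for this induction on $\alpha$, whose hypothesis $\exists w\,(\beta<\theta_{\{w\}})$ is not $\Delta_0$; it is not used for a collapse argument.

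If you adopt this route, the step $j_{\alpha+1}(A_n[w_n])=\beta_n$ needs its own argument. Definedness only propagates upward (Observation \ref{observationmonotone}), and $\theta_{\{w_n\}}$ may exceed $\alpha+1$.
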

\begin{proof}
We proceed by induction on $\alpha$.\footnote{This is the one place in the paper where we use full induction rather than $\Delta_0$ induction.} Let $\alpha = \{\beta_n : n\in\N\}$ be a countable ordinal, and for each $n\in\N$ let $w_n \subseteq \N$ be such that $\beta_n < \theta_{\{w_n\}}$. Then by the definition of $\theta_{\{w_n\}}$, there exists a term $A_n \in \TT_\GDST(1,0)$ such that $\beta_n = j_{\theta_{\{w_n\}}}(A_n[w_n])$. Let $w\subseteq\N$ be such that $\iota_n^{-1}(w) = w_n$ and $\Phi(\iota_n'^{-1}(w)) = A_n$, where $\iota_n,\iota_n' : \N \to \N$ are uniformly definable injective maps with disjoint images and $\Phi:\P(\N) \to \TT_\GDST(1,0)$ is a definable surjective map. Finally, let
\[
A(w) = \min\{\alpha : \text{ there does not exist $n$ such that $\alpha = \overline\eval_j(\iota_n'^{-1}(w),\Phi(\iota_n^{-1}(w)))$}\}.
\]
Then $j_{\alpha + 1}(A[w]) = \alpha$, while $j_{\alpha + 1}(A_n[w_n]) = \beta_n$ for all $n$. It follows that $f_{\{w\}}(\alpha + 1) = \alpha + 1$, and thus $\alpha < \theta_{\{w\}}$.
\end{proof}

\section{Reflecting ordinals}
\label{sectionreflecting}

\begin{definition}
An ordinal $\theta$ is \emph{$\tau$-reflecting} for $\tau > \theta$ if there exist a set $z$ and such that $\theta = f_z(\tau)$. Equivalently, $\theta$ is $\tau$-reflecting if $f_\theta(\tau) = \theta$, or equivalently there is no term $A(\ww)$ and $\ww\in \theta^*$ such that $\theta = j_{\tau}(A[\ww])$.

Finally, an ordinal $\theta$ which is $\tau$-reflecting for some $\tau > \theta$ will be called \emph{reflecting}. We note that $\theta$ is reflecting if and only if it is $(\theta + 1)$-reflecting.
\end{definition}

We note that if $\theta_z$ exists, then it is reflecting. Thus, if there exists an uncountable ordinal then a countable reflecting ordinal exists.

Being reflecting is a large ordinal property. It implies being $\Pi_n$-reflecting for every $n\in\N$:

\begin{definition}
\label{definitionST}
We recall that an ordinal $\theta$ is said to be $\Pi_n$-reflecting if for every $\Pi_n$ sentence $P(\ww)$ of $\ST$ and for all $w_1,\ldots,w_M\in L_\theta$, if $L_\theta$ satisfies $P[\ww]$ then so does $L_\alpha$ for some $\alpha < \theta$ such that $w_1,\ldots,w_M \in L_\alpha$. Note that by Remark \ref{remarkQB}, this is equivalent to saying that if $P'[\ww,L_\theta]$ holds, then so does $P'[\ww,L_\alpha]$ for some $\alpha < \theta$ such that $w_1,\ldots,w_M \in L_\alpha$.
%Here we recall that $\ST$ is the standard language of set theory (i.e. the one-sorted first-order predicate logic with binary predicates $\in$ and $=$). Note that neither $\GDST$ nor $\ST$ contains the other, but for any sentence $P(\ww)$ in $\ST$, there is a formula $P'(\ww,L)$ of $\Delta_0(\ST) \subseteq \GDST$ such that for any $\alpha$, $L_\alpha$ satisfies $P[\ww]$ if and only if $\iota_{\alpha + 1}(P'[\ww,L_\alpha]) = \top$.% Moreover, if $P$ is a $\Delta_0$ formula of $\ST$ then we have $P\in \GDST$ as well.
\end{definition}

\begin{proposition}
\label{propositionreflection}
Every reflecting ordinal is $\Pi_n$-reflecting for every $n\in\N$.
\end{proposition}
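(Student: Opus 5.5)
Fix a reflecting $\theta$; by the remark after the definition we may take $\tau = \theta+1$, so that no ordinal term $A(\ww)$ with $\ww\in\theta^*$ satisfies $j_{\theta+1}(A[\ww]) = \theta$. The argument is by contraposition. Given a $\Pi_n$ sentence $P(\ww)$ with $w_1,\ldots,w_M\in L_\theta$ such that $P'[\ww,L_\theta]$ holds but $P'[\ww,L_\alpha]$ fails for every $\alpha<\theta$ with $\ww\in L_\alpha$, we build a term that names $\theta$ in the interpretation $I_{\theta+1}$.

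First, the parameters $w_m$ are not ordinals, so they must be replaced by ordinal parameters. Each $w_m$ lies in some $L_{\beta_m}$ with $\beta_m<\theta$. Elements of $L_{\beta+1}$ are given by a $\Delta_0$ formula together with finitely many parameters from $L_\beta$. Iterating this (as in the proof of Corollary \ref{corollaryexistsX}, or via a $\Delta_1$-definable canonical well-order of $L$), we obtain a set term $X_m(\boldsymbol\beta)$ with ordinal parameters $\boldsymbol\beta\in\theta^*$ such that $j'_{\theta+1}(X_m[\boldsymbol\beta]) = w_m$. A cleaner alternative is to use the fact that every $x\in L_\beta$ is the $\xi$-th element of $L_\beta$ in the canonical order for some $\xi<\theta$, and to express this by a term built only from constructors (1)--(5). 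If $\theta$ is not a limit ordinal, say $\theta=\gamma+1$, it is trivially named by $\gamma+1$ with parameter $\gamma$, which already contradicts reflecting. So $\theta$ is a limit, and the maps $\xi\mapsto$ element work inside $L_\theta$.

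Next, consider the formula
\[
Q(\alpha) \equiv (X_1,\ldots,X_M\in L_\alpha) \wedge P'[X_1,\ldots,X_M,L_\alpha],
\]
which is a $\Delta_0$ formula of $\ST$ applied to terms built without constructor (6). By Observation \ref{observationignoreconstructor}, $\iota_{\theta+1}(Q(\alpha))\in\{\top,\bot\}$ for every $\alpha\le\theta$. By hypothesis, $Q(\alpha)$ is false for all $\alpha<\theta$ and true at $\alpha=\theta$. Hence
\[
j_{\theta+1}(\min\{\alpha : Q(\alpha)\}[\boldsymbol\beta]) = \theta,
\]
which contradicts $\theta$ being $(\theta+1)$-reflecting. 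Note that $n$ plays no role: the argument works for arbitrary first-order sentences, since $P'$ is $\Delta_0$ regardless of the complexity of $P$.

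The main obstacle is the first step: showing that every element of $L_\theta$ is $j'_{\theta+1}$ of a set term with ordinal parameters below $\theta$, using only $\KP$-style reasoning (or $\Delta_0$ induction). I would prove it by induction on $\beta<\theta$, mirroring Corollary \ref{corollaryexistsX} but with ordinal parameters replacing the appeal to Observation \ref{observationexistsA}. Ordinals $\beta<\theta$ are trivially named by the term consisting of the variable itself. Limit stages are immediate, because an element of $L_\beta$ for limit $\beta$ already lies in some earlier $L_{\beta'}$. The successor step combines a $\Delta_0$ formula, terms for its finitely many parameters, and the term $L_A$.
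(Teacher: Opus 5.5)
Your proof is correct and is essentially the paper's argument: if $L_\theta$ satisfied $P[w_1,\ldots,w_M]$ but no earlier $L_\alpha$ containing the parameters did, the term $\min\{\alpha : Q(\alpha)\}$ would name $\theta$ in $I_{\theta+1}$ using only parameters below $\theta$, contradicting $(\theta+1)$-reflection. The paper writes the condition via $\eval_\iota$ and you via Observation \ref{observationignoreconstructor}, and your extra step of naming each $w_m\in L_\theta$ by a constructor-(6)-free set term with ordinal parameters below $\theta$ (by induction on $\beta$) fills in a detail the paper's proof omits, since it only treats parameter-free sentences; keep that induction rather than the canonical-order aside, whose claim that the index $\xi$ lies below $\theta$ would require closure properties of $\theta$ you have not established at that point.
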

The converse is also true but the proof is significantly more involved, and since we do not use it in the sequel we omit it.
\begin{proof}
Let $P$ be a formula of $\ST$, and let $P'(L)$ be as in Remark \ref{remarkQB}. Now suppose $\theta$ is reflecting.   If $\theta$ is not $\Pi_n$-reflecting for some $n\in\N$, then for some sentence $P$ in the language of set theory we have $\iota_{\theta + 1}(P'[L_\theta]) = \top$ while $\iota_{\alpha + 1}(P'[L_\alpha]) = \bot$ for all $\alpha < \theta$. Thus $\theta = j_{\theta + 1}(\min\{\eta : \eval_\iota(P'(L_\eta),\eta) = \top\})$, contradicting that $\theta$ is reflecting.
\end{proof}

It is well-known that being $\Pi_n$-reflecting for all $n\in\N$ implies being recursively Mahlo, which implies being admissible (definitions given in Definitions \ref{definitionrecursivelymahlo} below and \ref{definitionKP} above, respectively). For convenience we provide the proofs of these implications below.

\begin{proposition}[Either $\KP$ or $\AA_\GDST$]
\label{propositionadmissible}
Every reflecting ordinal is admissible.
\end{proposition}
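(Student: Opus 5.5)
Let $\theta$ be reflecting, so no ordinal term with parameters from $\theta$ evaluates at stage $\theta+1$ to $\theta$. The plan is to verify the hypotheses of the earlier proposition on limit ordinals $>\omega$, and then check $\Delta_0$-collection in the form of Lemma \ref{lemmaETS}. The main tool is reflection in the form stated in the definition: if some $\GDST$ term with parameters in $L_\theta$ would pin down $\theta$ as a least ordinal, we get a contradiction. Parameters from $L_\theta$ are allowed because they are describable from ordinal parameters below $\theta$ via the canonical well-ordering of $L$ (or via $\Delta_0$ definitions over some $L_\beta$, $\beta<\theta$).

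First, $\theta$ is a limit ordinal. If $\theta=\beta+1$, then $\theta=j_{\theta+1}(\beta+1)$, where $\beta+1=\min\{\alpha:\beta<\alpha\}$ and $\beta\in\theta$ is a parameter, contradicting reflection. Likewise $\theta>\omega$: we have $\omega=j_{\omega+1}(\min\{\alpha:\alpha\neq 0\wedge \forall\beta<\alpha\;(\beta+1<\alpha)\})$, and finite ordinals are given by closed terms. So the earlier proposition applies, and $L_\theta$ satisfies all axioms of $\KP$ except possibly $\Delta_0$-collection.

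For collection, use version 2 from Lemma \ref{lemmaETS}. Fix a $\Delta_0$ formula $Q(x,\beta)$ (with parameters $\ww\in L_\theta$), fix $\alpha<\theta$, and suppose that for every $x\in L_\alpha$ there is $\beta<\theta$ with $Q'(x,\beta,L_\theta)$. Suppose toward a contradiction that no $\gamma<\theta$ bounds these witnesses. Consider the ordinal term
\[
C=\min\{\gamma : \forall x\in L_\alpha\;\exists\beta<\gamma\; Q'(x,\beta,L_\gamma)\}.
\]
To make the occurrence of $L_\theta$ inside $Q'$ track $\gamma$, I would rather use the $\Pi_n$-reflection route. By Proposition \ref{propositionreflection}, $\theta$ is $\Pi_2$-reflecting. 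The statement ``$\forall x\in L_\alpha\;\exists\beta\;Q(x,\beta)$ (relativized) and for all $\gamma$ there is $x\in L_\alpha$ with no witness below $\gamma$'' is a $\Pi_2$ sentence about $L_\theta$ with parameters $\alpha,\ww$. Indeed, the second clause is $\forall\gamma\,\exists x\in L_\alpha\,\forall\beta<\gamma\,\neg Q$, and the first is $\forall x\in L_\alpha\,\exists\beta\,Q$. Relativized quantification over sets is fine since all unbounded quantifiers are over $L_\theta$.

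By $\Pi_2$-reflection there is $\delta<\theta$, with $\alpha,\ww\in L_\delta$, such that $L_\delta$ satisfies this sentence. The first clause, relativized to $L_\delta$, gives for each $x\in L_\alpha$ a witness $\beta<\delta$. But $Q$ is $\Delta_0$ and was relativized to $L_\theta$ via $Q'$. Here is the main obstacle: one must ensure that $Q$ is genuinely $\Delta_0$ with set parameters, so that it is absolute between $L_\delta$ and $L_\theta$. That is exactly the case for the statement of $\Delta_0$-collection itself, so I would apply this argument directly to the original collection instance $P(x,y)$ rather than to version 2, avoiding the $L_\eta$ inside $Q'$. Then $\gamma=\delta$ bounds all witnesses in $L_\theta$, contradicting the second clause as evaluated in $L_\theta$ at $\gamma=\delta$. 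Hence $L_\theta$ satisfies $\Delta_0$-collection and $\theta$ is admissible. The same argument works in $\AA_\GDST$, since every step used only bounded quantifiers, the $\min$-constructor, and the reflection hypothesis.
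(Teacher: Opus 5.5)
Your argument is correct, but it takes a different route from the paper's proof.

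\textbf{The paper's route.} The paper proves $\Delta_0$-collection directly from the definition of reflecting. Via Lemma~\ref{lemmaETS} it works with ordinal witnesses and forms uniform least-witness terms $D(x)$ and $B(x)$. It then considers the term $\min\{\gamma : \forall x\in L_{[\alpha]}\; B(x)<\gamma\}$, whose value at stage $\theta+1$ is at most $\theta$, so reflection forces it below $\theta$. That argument stays entirely inside the term language and is independent of Proposition~\ref{propositionreflection}.

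\textbf{Your route.} You go through Proposition~\ref{propositionreflection} and prove by hand the easy half of the Richter--Aczel equivalence, which the paper only cites as an alternative. You reflect $\forall x\in z\,\exists y\,P(x,y)$ down to some $L_\delta$ with $\delta<\theta$ containing $z$ and the parameters, and take $L_\delta\in L_\theta$ as the collecting set. You were right to drop version~2 of collection for this: its formula carries $L_\theta$ itself as a parameter, and that cannot be reflected.

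\textbf{Comparison.} Your route is shorter and modular, at the cost of depending on Proposition~\ref{propositionreflection}. The paper's route shows the min-term technique working on its own. You also explicitly rule out $\theta\le\omega$ with closed terms. The earlier proposition on limit ordinals and Lemma~\ref{lemmaETS} need this, and the paper's proof leaves it unchecked.

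\textbf{Two small simplifications.} First, the second clause of your reflected sentence and the proof by contradiction are unnecessary. Reflecting the first clause alone already gives the collecting set $L_\delta$. Second, if you keep that clause, write it as $\forall w\,\exists x\in z\,\forall y\in w\,\neg P(x,y)$ rather than quantifying over ordinals $\gamma$ with ``no witness in $L_\gamma$''. That way it is literally a formula of $\ST$ and you avoid having to define $\gamma\mapsto L_\gamma$ inside $L_\theta$.
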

In fact, an ordinal is admissible if and only if it is $\Pi_2$-reflecting, see \cite{RichterAczel}, so this proposition follows from \ref{propositionreflection}. For convenience we provide an independent proof.
\begin{proof}
Let $\theta$ be reflecting. First suppose $\theta = \alpha + 1$ for some $\alpha$. Then $\theta = j_{\theta + 1}([\alpha] + 1)$, contradicting that $\theta$ is reflecting.

To show that $\theta$ satisfies $\Delta_0$ collection we apply Lemma \ref{lemmaETS}. Suppose that $\alpha < \theta$ and that $Q(x,\beta)$ is a $\Delta_0$ formula of $\ST$ such that for all $x \in L_\alpha$, there exists $\beta < \theta$ such that $\iota_\theta(Q[x,\beta]) = \top$. Let
\begin{align*}
D(x) &:= \min\{\delta : \exists \beta < \delta \; \eval_\iota(Q[x,\beta],\delta) = \top\}\\
B(x) &:= \min\{\beta : \eval_\iota(Q[x,\beta],D(x)) = \top\}\\
\gamma &:= j_{\theta + 1}(\min\{\gamma : \forall x \in L_{[\alpha]} \;  B(x) < \gamma\}).
\end{align*}
Fix $x \in L_\alpha$. By hypothesis we have $\iota_\theta(Q[x,\beta]) = \top$ for some $\beta < \theta$, which implies $\iota_\delta(Q[x,\beta]) = \top$ for all sufficiently large $\delta < \theta$ by Observation \ref{observationcontinuous}, in particular for some $\delta \in (\beta,\theta)$. It follows that $\iota_\theta(\eval_\iota(P,[(x,\beta)],[\delta]) = \top) = \top$ and thus $\delta := j_\theta(D[x]) \neq U$. By definition, there exists $\beta < \delta$ such that $\iota_\theta(\eval_\iota(P,[(x,\beta)],[\delta]) = \top) = \top$, and thus $\beta := j_\theta(B[x]) \neq U$. By the minimality of $\beta$ we have $\beta < \delta < \theta$.

To summarize, for all $x \in L_\alpha$ we have $j_\theta(B[x]) < \theta$. It follows that $\gamma \leq \theta$, but since $\theta$ is reflecting, we have $\gamma \neq \theta$. So $\gamma < \theta$.
\end{proof}

\begin{corollary}
For all $z$, $\theta_z$ (if it exists) is an admissible ordinal. In particular, $\theta_\smallemptyset$ (if it exists) is an admissible ordinal.
\end{corollary}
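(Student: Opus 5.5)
The plan is to reduce the corollary to Proposition \ref{propositionadmissible}, which says every reflecting ordinal is admissible. So it suffices to show that $\theta_z$, whenever it exists, is reflecting. By the definition in Section \ref{sectionreflecting}, $\theta$ is $\tau$-reflecting for $\tau>\theta$ if $\theta=f_{z'}(\tau)$ for some set $z'$. Equivalently, there is no term $A(\ww)$ with $\ww\in\theta^*$ such that $\theta=j_\tau(A[\ww])$. Being reflecting is equivalent to being $(\theta+1)$-reflecting.

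The key step is to show $f_z(\theta_z+1)=\theta_z$. By Corollary \ref{corollarythetaz}, $\theta_z$ is the maximal element of $\{\eta: f_z(\eta)=\eta\}$, so $f_z(\theta_z)=\theta_z$. Since $f_z$ is weakly increasing by Proposition \ref{propositionfz}, we get
\[
\theta_z=f_z(\theta_z)\le f_z(\theta_z+1)\le\theta_z+1.
\]
If $f_z(\theta_z+1)=\theta_z+1$, then $\theta_z+1$ would be a fixed point of $f_z$ strictly larger than $\theta_z$, contradicting maximality. Hence $f_z(\theta_z+1)=\theta_z$, so $\theta_z$ is $(\theta_z+1)$-reflecting with witness set $z$. (The paper already remarks that $\theta_z$ is reflecting whenever it exists; this argument supplies that remark.)

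Applying Proposition \ref{propositionadmissible} then gives that $\theta_z$ is admissible. The special case $z=\emptyset$ is immediate.

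The only potential obstacle is a degenerate case: the definition of reflecting requires $\tau>\theta$, which holds for $\tau=\theta_z+1$, so nothing fails there. One might also worry that $\theta_z$ is small, e.g. $0$. But Proposition \ref{propositionadmissible} handles successors internally, and its proof is uniform in $\theta$, so no extra case analysis is needed beyond citing it.
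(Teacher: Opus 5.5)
Your proposal is correct and follows the paper's route. The paper obtains the corollary immediately from Proposition \ref{propositionadmissible} together with its earlier remark that $\theta_z$, when it exists, is reflecting, and your monotonicity-plus-maximality argument giving $f_z(\theta_z+1)=\theta_z$ is exactly the justification for that remark.
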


We can strengthen this result by recalling the definition of a \emph{recursively Mahlo ordinal}:

\begin{definition}
\label{definitionrecursivelymahlo}
An admissible ordinal $\eta$ is \emph{recursively Mahlo} if for every function $f:\eta\to\eta$ definable in $L_\eta$ by a $\Delta_1$ formula, there is an admissible $\alpha < \eta$ closed under $f$.
\end{definition}

\begin{proposition}
Every reflecting ordinal is recursively Mahlo.
\end{proposition}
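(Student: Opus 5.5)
Let $\theta$ be reflecting. By Proposition \ref{propositionadmissible}, $\theta$ is admissible. Let $f:\theta\to\theta$ be $\Delta_1$-definable over $L_\theta$; we must find an admissible $\alpha<\theta$ closed under $f$. Arguing by contradiction, I would show that, if no such $\alpha$ exists, then $\theta$ is the value of a $\GDST$ ordinal term with parameters below $\theta$, evaluated at $\theta+1$. This mirrors the proofs of Propositions \ref{propositionreflection} and \ref{propositionadmissible}.

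\textbf{Step 1 (a term for $f$).} $f$ is $\Sigma_1$-definable over $L_\theta$, say $f(\beta)=\gamma$ iff $L_\theta\models\exists u\, R(u,\beta,\gamma,\ww)$ with $R\in\Delta_0(\ST)$ and $\ww\in L_\theta^M$. As in the proof of Proposition \ref{propositionadmissible}, define
\[
D(\beta):=\min\{\delta : \exists \gamma<\delta\;\exists u\in L_\delta\; R(u,\beta,\gamma,\ww)\}, \qquad F(\beta):=\min\{\gamma<D(\beta) : \exists u\in L_{D(\beta)}\, R(u,\beta,\gamma,\ww)\}.
\]
These are genuine $\GDST$ terms; bounded quantifiers over $L_\delta$ are allowed. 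For $\beta<\theta$ we have $j_\theta(F[\beta])=f(\beta)$, because a witness appears at some stage below $\theta$ and the output is unique since $f$ is a function. By Observation \ref{observationmonotone}, the same holds for $j_{\theta+1}$. The parameters $\ww$ lie in $L_\theta$. The definition of reflecting uses parameters from $\theta^*$, so I would either use the equivalent formulation $\theta=f_z(\tau)$ with a set $z$ containing $\ww$, or express each $w_i$ via some $\alpha_i<\theta$ and a term. The latter works only if $\theta$ is descriptionalist, so the first route is cleaner, taking $z=L_\theta$ or similar. I would check that the proofs of the previous propositions use the equivalence in this form.

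\textbf{Step 2 (the contradicting term).} Let $\mathrm{Adm}(\alpha)$ be the $\Delta_0$ formula $P'(L_\alpha)$ from Remark \ref{remarkQB}, where $P$ is the conjunction of the finitely many $\KP$ axioms with $\Delta_0$-collection, restricted as in Lemma \ref{lemmaETS}. Note that $\KP$ is a schema, so it has infinitely many instances. To handle this, I would use $\eval_\iota$ from Proposition \ref{propositionselfmeta}: quantify $\forall P\in\Delta_0(\ST)$ inside $L_\alpha$, exactly as in axiom (5). This makes $\mathrm{Adm}(\alpha)$ a single $\GDST$ formula that is defined and correct for limit $\alpha$. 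Now set
\[
C:=\min\{\alpha : \mathrm{Adm}(\alpha)\wedge \forall \beta<\alpha\; F(\beta)<\alpha\}.
\]
For $\alpha<\theta$, the value $\iota_{\theta+1}(\forall\beta<\alpha\,F(\beta)<\alpha)$ is defined by Step 1. By our assumption it is false whenever $\mathrm{Adm}(\alpha)$ holds. Hence the whole condition is $\bot$ for every $\alpha<\theta$. At $\alpha=\theta$, $\theta$ is admissible and closed under $f$, so the condition is $\top$. Thus $j_{\theta+1}(C)=\theta$, contradicting that $\theta$ is $(\theta+1)$-reflecting.

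\textbf{Main obstacle.} The delicate point is the definedness requirement (B) in the semantics of $\min$: every earlier instance must evaluate to $\bot$, not $U$. The terms $F(\beta)$ are defined only for $\beta<\theta$, which is exactly what is needed, since the condition is only evaluated at $\alpha\le\theta$. At $\alpha=\theta$ itself, in the interpretation $I_{\theta+1}$, $F(\beta)$ for $\beta<\theta$ is still defined. The remaining care is the parameter bookkeeping from Step 1, and making sure $\mathrm{Adm}$ is expressed correctly via $\eval_\iota$ at successor and limit stages. Non-limit $\alpha$ are simply not admissible, so they evaluate to $\bot$.
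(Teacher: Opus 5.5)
Your argument is correct and is essentially the paper's proof: the paper forms the term $\min\{\delta : \delta \text{ admissible and } \forall \alpha<\delta\; B(\alpha)<\delta\}$ and concludes directly (rather than by contradiction) that its value under $I_{\theta+1}$ is defined and $<\theta$, the only difference being that it represents $f$ by deciding each instance via $\Sigma_1$ definitions of both the graph and its complement, whereas you use a bounded search below a witness stage as in Proposition \ref{propositionadmissible}. On parameters (which the paper ignores), the route you dismissed does work: $\theta$ is trivially $\theta$-descriptionalist because each $\alpha<\theta$ is its own parameter, so Corollary \ref{corollaryexistsX2} with $z=\theta$ gives set terms $X_i$ and $\mathbf{v}_i\in\theta^*$ with $w_i=j'_\theta(X_i[\mathbf{v}_i])$, and substituting these into $C$ gives the contradiction with $f_\theta(\theta+1)=\theta$ (this is also exactly what justifies your preferred choice $z=L_\theta$).
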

\begin{proof}
Let $\theta$ be reflecting, and let $f:\theta\to\theta$ be defined by the $\Delta_1$ formula $P(\alpha,\beta)$. We can assume that $P(
\alpha,\beta)$ is a $\Sigma_1$ formula, while letting $\w P(\alpha,\beta)$ be a $\Sigma_1$ formula equivalent to $\neg P(\alpha,\beta)$. Letting $P'$ and $\w P'$ be as in Remark \ref{remarkQB}, we get that $P(\alpha,\beta)$ holds in $L_\theta$ if and only if $P'(\alpha,\beta,L_{C(\alpha,\beta)})$ holds, where $C(\alpha,\beta) = \min\{\gamma : P'(\alpha,\beta,L_\gamma) \vee \w P'(\alpha,\beta,L_\gamma)\}$. Since $\theta$ is reflecting we have $j_{\theta + 1}(C[\alpha,\beta]) < \theta$ for all $\alpha,\beta < \theta$.

Next, let $B(\alpha) = \min\{\beta : P'(\alpha,\beta,L_{C(\alpha,\beta)})\}$. Since $\theta$ is reflecting we have $j_{\theta + 1}(B[\alpha])) < \theta$ for all $\alpha < \theta$.

Finally, let $D = \min\{\delta : \text{$\delta$ is admissible and for all $\alpha < \delta$, $B(\alpha) < \delta$}\}$. Since $\theta$ is reflecting we have $\eta := j_{\theta + 1}(D) < \theta$. But then $\eta$ is an admissible ordinal closed under $f$.
\end{proof}

\ignore{

\begin{corollary}[Meant to replace the subsequent proposition]
Let $P(\alpha)$ be a ``good'' formula. Then the formula
\[
P'(\alpha) \equiv \text{ $\alpha$ is a limit of $\beta$ such that $\iota_\alpha(P[\beta]) = \top$ }
\]
is good.
\end{corollary}

\begin{proposition}
\label{propositionsequence}
Let $\tau$ be an ordinal such that $f_z(\tau) < \tau$, and let $P(\ww,\alpha)$ be a formula such that
\begin{itemize}
\item $\iota_\tau(P[\ww,\alpha]) \neq U$ for all $\alpha \leq f_z(\tau)$ and $\ww\in z^*$ and
\item $\iota_\tau(P[\ww,f_z(\alpha)]) = \top$ for all $\alpha \leq \tau$ such that $f_z(\alpha) < \alpha$.
\end{itemize}
Let $g_P(\alpha)$ denote the $\alpha$th ordinal such that $\iota_\tau(P[\alpha]) = \top$ (noting that $g_P(\alpha)$ may be undefined). Then $g_P(f_z(\tau)) = f_z(\tau)$. In particular $g_P(\theta_z) = \theta_z$, and $f_z(g_P(\theta_z + 1)) = \theta_z$ if $g_P(\theta_z + 1)$ is defined.
\end{proposition}
\begin{proof}
%First note that by letting $T(w) = \min\{\alpha : P(\alpha) \wedge \forall (\beta,\gamma)\in w \; \gamma < \alpha\}$ and applying Lemma \ref{lemmarecursion}, there exists a formula $G_P(\alpha) = T^\alpha$ such that $\iota_\tau(G_P(\alpha)) = g_P(\alpha)$ for all $\alpha$ such that $g_P(\alpha) < \tau$.
Write $f_z(\tau) = g_P(\alpha)$ for some $\alpha \leq f_z(\tau)$. If $\alpha = f_z(\tau)$ we are done showing $g_P(f_z(\tau)) = f_z(\tau)$, so suppose $\alpha < f_z(\tau)$ and thus $\alpha = j_\tau(A[\ww])$ for some $A(\ww)$ and $\ww\in z^*$. Since $g_P(\alpha) = f_z(\tau) < \tau$, we have $f_z(\tau) = j_\tau(G_P(A[\ww]))$, where $G_P(\alpha)$ is a term representing the function $g_P$ (cf. Corollary \ref{corollaryGP}). This contradicts the definition of $f_z$. So $g_P(f_z(\tau)) = f_z(\tau)$.

Letting $\tau = \theta_z + 1$, we get $g_P(\theta_z) = \theta_z$. Finally, suppose $\tau := g_P(\theta_z + 1)$ exists, and observe that $f_z(\tau) = g_P(\alpha)$ for some $\alpha$. Since $f_z(\tau) < \tau = g_P(\theta_z + 1)$ we get $\alpha < \theta_z + 1$ i.e. $\alpha \leq \theta_z$ and thus $f_z(\tau) = g_P(\alpha) \leq g_P(\theta_z) = \theta_z$.
\end{proof}

\begin{example}
\label{examplesequence}
The hypotheses of Proposition \ref{propositionsequence} are satisfied when
\begin{itemize}
\item $P_0(\beta) \equiv \text{($\beta$ is admissible)}$ by Proposition \ref{propositionadmissible}
\item $P_{\alpha + 1}(\beta) \equiv (g_{P_\alpha}(\beta) = \beta)$ where $P_\alpha$ is as in Proposition \ref{propositionsequence}.
\end{itemize}
\end{example}

In what follows we use the shorthand $g_\alpha = g_{P_\alpha}$.

\begin{corollary}
The Church--Kleene ordinal $\omega_{CK}$ satisfies $\omega_{CK} < \theta_\smallemptyset$.
\end{corollary}
\begin{proof}
$\omega_{CK} = g_0(1)$, so since $g_0(\theta_\smallemptyset) = \theta_\smallemptyset$ we have $\omega_{CK} < \theta_\smallemptyset$.
\end{proof}

%Note: The reason we can't use $\Sigma_1$ instead of $\GDST$ is that $\alpha =_\eta \min\{\beta : P_\eta(\beta)\}$ requires $P_\eta(\beta)$ to be false for $\beta < \alpha$, which is not a $\Sigma_1$ statement. In the language $\GDST$, both the truth and falsity (but not the undefinedness) of $P_\eta$ are $\Sigma_1$ statements.

}% end ignore

\ignore{

\section{Lower bounds for $\theta_z$}

%So far we have not given an axiomatization of $\GDST$. If $A$ is a term such that $\alpha = j_{\theta_z}(A)$ is an admissible ordinal, then we can embed $\ST$ into $\GDST$ by replacing unbounded quantifiers $\forall x$ and $\exists x$ with quantifiers $\forall x\in L_A$ and $\exists x\in L_A$ restricted to $L_\alpha$. Denoting this embedding by $\ST_A$, we can use $\KP$ as an axiomatization of $\ST_A$. In this section, we explore the possibility of using stronger axiomatizations of $\ST_A$ for certain terms $A$, justified by reference to our interpretation of $\GDST$ within $\KP + \omega_1$.

\begin{lemma}
There exists an ordinal term $F = F_z(\alpha)$ with ordinal input $\alpha$ and set input $z$ such that for all $\eta < \tau$ and $z$, we have $j_\tau(F_{[z]}[\eta]) = f_z(\eta)$. Similarly, there exists an ordinal term $\overline F = \overline F_z$ such that for all $\tau$ and $z$, we have $j_\tau(\overline F_{[z]}) = f_z(\tau)$ if $f_z(\tau) < \tau$, and $j_\tau(\overline F_{[z]}) = U$ otherwise.
\end{lemma}
\begin{proof}
Follows directly from Proposition \ref{propositionselfmeta} and Corollary \ref{corollaryselfmeta} respectively.
\end{proof}

\begin{proposition}
Let $P(\theta)$ be a formula such that $\iota_\tau(P[\theta]) = \top$ for some $\theta = f_z(\tau) < \tau$. Then there exists $\alpha < \theta$ such that $\iota_\tau(P[\alpha]) = \top$.
\end{proposition}
\begin{proof}
Otherwise, $\theta = j_\tau(\min\{\beta : \eval_\iota(P[\beta]) = \top\})$, contradicting the definition of $f_z$.
\end{proof}

\begin{proposition}
For all $\alpha < \theta = f_z(\tau) < \tau$, there exists $\alpha \leq \eta < \theta$ such that $f_z(\eta) = \eta$.
\end{proposition}
\begin{proof}
If not, then
\[
\theta = j_\tau(\min\{\beta : F_{[z]}(\beta) = \beta > A\}),
\]
where $A$ is an ordinal term such that $j_\tau(A) = \alpha$ (such a term exists because $f_z(\tau) = \theta > \alpha$). This contradicts the definition of $f_z$.
\end{proof}

\begin{proposition}
For all $\alpha_0 < \theta = f_z(\tau) < \tau$ there exists $\alpha_0 \leq \alpha < \theta$ such that $g_\alpha(\alpha) = \alpha$.
\end{proposition}
\begin{proof}
Otherwise $\theta = j_\tau(\min\{\alpha : g_\alpha(\alpha) = \alpha\})$, contradicting the definition of $f_z$.
\end{proof}

\begin{proposition}
\label{propositionfgfixedpoint}
Let $G(\alpha)$ be an ordinal term depending on an ordinal input $\alpha$. Let $z$ be a finite set and let $\theta = f_z(\theta) = f_z(\tau) < \tau$ and suppose that $g(\theta) := j_\tau(G[\theta]) \neq U$.  Suppose also that for all $\alpha < \theta$, we have $g(\alpha) := j_{\theta}(G[\alpha]) \neq U$. Then for all $\alpha_0 < \theta$ there exists an ordinal $\alpha_0 \leq \alpha < \theta$ such that $\alpha = f_z(g(\alpha))$.
\end{proposition}
\begin{proof}
Suppose the conclusion fails, and let $\alpha_0$ be as in the conclusion. By Observation \ref{observationexistsA} we have $\alpha_0 = j_{\theta}(A[\ww])$ for some $A(\ww)$ and $\ww\in z^*$. Since $\theta \leq g(\theta) < \tau$ we have $\theta = f_z(\theta) \leq f_z(g(\theta)) \leq f_z(\tau) = \theta$ and thus $f_z(g(\theta)) = \theta$, so
\[
\theta = j_\tau(\min\{ \alpha \geq A : \alpha = F_{[z]}(G(\alpha)) \})
\]
which contradicts $f_z(\tau) = \theta$ by the definition of $f_z$.
\end{proof}

\begin{corollary}
\label{corollaryadmissiblesequence}
Let $z$ be a finite set, and let $\theta_z$ be as in Corollary \ref{corollarythetaz}. For all $\alpha_0 < \theta_z$, there exists $\alpha_0 \leq \alpha < \theta_z$ such that $f_z(\beta) = \alpha$ for an admissible $\beta > \alpha$, and $g_0(\beta) = \beta$.
\end{corollary}
\begin{proof}
Let the functions $g_0,g_1,g_2$ be as in Example \ref{examplesequence}, and note that $g_2(\theta_{z\cup\{\theta_z\}}) = \theta_{z\cup\{\theta_z\}}$ by Proposition \ref{propositionsequence}. Since $\theta_{z\cup\{\theta_z\}} > \theta_z$, this implies that $\tau := g_2(\theta_z + 1)$ exists. Let $\tau = \theta_z + 1$, and observe that $f_z(\tau) = \theta_z$. Let $G(\alpha) = G_1(\alpha + 1)$, where $G_1(\alpha)$ is a term representing $g_1(\alpha)$. By Proposition \ref{propositionfgfixedpoint} we have $\alpha = f_z(\beta)$, where $\beta := g_1(\alpha + 1)$ for some $\alpha < \theta_z$. Now, $\beta > \alpha$ and $g_0(\beta) = \beta$ and $f_z(\beta) = \alpha$, which completes the proof.
\end{proof}

This suggests an axiomatization of $\GDST$ as follows: letting $B(\alpha_0) = \min\{ \beta > \alpha_0 : (g(\beta) = \beta) \wedge (f_z(\beta) < \beta)\}$ and $A(\alpha_0) = F(B(\alpha_0))$, for each term $A$ such that $A\downarrow$ axiomatize $\ST_{B(A)}$ by $\KP$ together with the axioms $\forall \alpha < B(A) \; f_z(\alpha) \leq A$ and $\forall \alpha < B(A) \; G(\alpha)\downarrow$.

%Note: If $S \subseteq \N$ then we can define $\theta_z(S)$ analogously; it is possible that $\theta_z(S) > \theta_z$. [Philosophical implications\internal]

%\begin{proposition}
%Let $t(\alpha)$ be an ordinal term depending on an ordinal variable $\alpha$, and suppose that
%\begin{itemize}
%\item[(a)] there exists $\tau > \theta_z$ such that the sequence $\tau_0 = \tau$, $\tau_{k+1} = j_{\tau_k}(t_k[\theta_z])$ is well-defined for $k=0,\ldots,K-1$
%\item[(b)] for all $\alpha < \theta_z$, the sequence $\alpha_0 = \alpha$, $\alpha_{k+1} := j_{\theta_z}(t_k,\alpha_k)$ is well-defined for $k=0,\ldots,K-1$
%\end{itemize}
%Then there exists $\alpha < \theta_z$ such that $\eta \neq j_{\alpha_K}(s,\emptyset)$ for all ordinal terms $s$.
%\end{proposition}

}% end ignore

\section{The language of not necessarily monotonic inductive definitions}
\label{sectionNMID}

%\begin{proposition}
%If $\eta$ is the running time of a ID based on a set $z$, then $\eta < \theta_z$.
%\end{proposition}
%\begin{proof}
%Let $\tau = \eta + 1$. We claim that $f_z(\tau) = \tau$, which implies $\eta < \tau \leq \theta_z$. Indeed, fix $\alpha < \tau$, and we will show that $j_\tau(T) = \alpha$ for some term $T \in \TT_\GDST(0,0)$. First suppose that $\alpha < \eta$, and let $n\in S_{\alpha + 1} \butnot S_\alpha$. Then we can let $T = \min\{\beta : n\in S_{\beta + 1}\}$. Next, suppose $\alpha = \eta$. Then we can let $T = \min\{\beta : S_{\beta + 1} = S_\beta\}$.
%\end{proof}

%Question: Is the function $g(\alpha) =$ $\alpha$th admissible ordinal describable relative to ID?\internal

The remainder of the paper will be devoted to stating and proving an equivalence between a large ordinal axiom defined in terms of the functions $f_z$ (the existence of a ``$k$-fold reflecting ordinal''), and an assumption about the existence of limits of a certain class of not necessarily monotonic inductive definitions.

\ignore{

\begin{definition}
\label{definitionkfold}
An ordinal $\eta$ is \emph{$\alpha$-fold reflecting} if there exist $\eta = \eta_0 < \ldots < \eta_\alpha$ such that for $\beta < \alpha$, we have $f_{\eta_\beta}(\eta_\alpha) = \eta_\beta$.
\end{definition}

\begin{proposition}
Suppose that an uncountable ordinal exists. Then for all $\alpha < \theta_\smallemptyset$, $L_{\theta_\smallemptyset}$ satisfies ``there exist arbitrarily large $\alpha$-fold reflecting ordinals''.
%Fix an integer $k$. There exist $\eta_0 < \ldots < \eta_k < \theta_\smallemptyset$ such that if $z_i = \{\eta_0,\ldots,\eta_{i - 1}\}$, then $f_{z_i}(\eta_{i + 1}) = \eta_i$ for all $i = 0,\ldots,k - 1$.
\end{proposition}
\begin{proof}
Define inductively $\sigma_\beta = f_{z_\beta}(\omega_1)$ for all $\beta \leq \alpha$, where $z_\beta = \{\sigma_\gamma : \gamma < \beta\}$. Then $\sigma_\beta$ is countable for all countable $\beta$. Finally, let $\tau = \omega_1$. Then $\sigma_0 < \ldots < \sigma_\alpha < \tau$. Let
\begin{equation}
\label{Palphatau}
P(\alpha,\tau) \equiv \exists g\in L_\tau \forall \beta < \alpha \; (f_{g(\beta)}(\tau) = g(\beta)) \wedge (\forall \gamma < \beta < \alpha) (g(\gamma) < g(\beta))
\end{equation}
Note that $P(\alpha,\omega_1) = \top$, so $\eta := \min\{\tau : \exists \alpha < \tau \; P(\alpha,\tau)\} \neq U$. Let $g\in L_\eta$ be as in \eqref{Palphatau}. By the minimality of $\eta$ the order type of $\{\gamma < \eta : f_\gamma(\eta) = \gamma\}$ is $\alpha$, as otherwise we could replace $\eta$ by $g(\alpha + 1) < \eta$ and still satisfy $P(\alpha,\eta)$. For each such $\gamma$, we have $\gamma = j_{\eta + 1}(g(\beta))$, so for all $\beta \leq \eta$ we have $f_\beta(\eta + 1) > \beta$. An induction argument then shows that $f_\smallemptyset(\eta + 1) = \eta + 1$, so $\eta + 1 \leq \theta_\smallemptyset$.
\end{proof}
}% end ignore

\begin{definition}
\label{definitionkfold}
An ordinal $\eta$ is \emph{$k$-fold reflecting} if there exist $\eta = \eta_0 < \ldots < \eta_k$ such that for $i = 0,\ldots,k - 1$, we have $f_{\eta_i}(\eta_{i + 1}) = \eta_i$.
\end{definition}

\begin{proposition}
Suppose that an uncountable ordinal exists. Then for all $k\in\N$, $L_{\theta_\smallemptyset}$ satisfies ``there exist arbitrarily large $k$-fold reflecting ordinals''.
%Fix an integer $k$. There exist $\eta_0 < \ldots < \eta_k < \theta_\smallemptyset$ such that if $z_i = \{\eta_0,\ldots,\eta_{i - 1}\}$, then $f_{z_i}(\eta_{i + 1}) = \eta_i$ for all $i = 0,\ldots,k - 1$.
\end{proposition}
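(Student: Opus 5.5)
Assume there is an uncountable ordinal, so $\omega_1$ exists and $\theta_\emptyset$ exists by Corollary \ref{corollarythetaz}. The idea has two parts. First, build a $k$-fold reflecting chain in $V$ below $\omega_1$. Then use the fact that $\theta_\emptyset$ is $\emptyset$-descriptionalist: a $\GDST$ term that searches for a $k$-fold reflecting ordinal above a given $\alpha$ is evaluated at $\theta_\emptyset+1$. If no such ordinal existed below $\theta_\emptyset$, that term would either describe $\theta_\emptyset$ or be undefined, and we will rule out both.

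\emph{Step 1 (existence in $V$).} Fix $\alpha<\theta_\emptyset$. Define $\tau_k=\omega_1$ and, descending, $\tau_i=f_{z_i}(\tau_{i+1})$ with a parameter set $z_i$ chosen to contain $\alpha$ and the relevant earlier data. Since each $z_i$ is countable, Proposition \ref{propositionfz} makes each $\tau_i$ countable. We want $\tau_i=f_{\tau_i}(\tau_{i+1})$, that is, $\tau_i$ is $\tau_{i+1}$-reflecting, which is exactly the condition in Definition \ref{definitionkfold}. The simplest choice is $z_i=\{\alpha\}$, giving $\tau_i=f_{\{\alpha\}}(\tau_{i+1})$. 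By the definition of $f_z$, every $\beta<\tau_i$ is $j_{\tau_{i+1}}$ of a term with parameters in $\{\alpha\}^*\subseteq\tau_i^*$, so $f_{\tau_i}(\tau_{i+1})\ge\tau_i$.

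For the reverse inequality: if $\tau_i=j_{\tau_{i+1}}(A[\ww])$ with $\ww\in\tau_i^*$, then substituting for each entry of $\ww$ a term in $\alpha$ describing it makes $\tau_i$ describable from $\alpha$ alone. This uses Observation \ref{observationexistsA}-type describability of each $w_m<\tau_i$ at $\tau_{i+1}$, together with composition of terms, which monotonicity (Observation \ref{observationmonotone}) keeps well-defined. That contradicts $\tau_i=f_{\{\alpha\}}(\tau_{i+1})$. We also need strictness $\tau_i<\tau_{i+1}$ and $\alpha<\tau_0$. Strictness follows from $f_z(\eta)\le\eta$ together with $f_z(\eta)\neq\eta$ whenever $\eta$ is a successor, or whenever $\eta$ is itself defined from $z$ at the next stage. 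If needed, replace $\tau_{i+1}$ by $\tau_{i+1}+1$ to force this. For $\alpha<\tau_0$, note that $\alpha=j(\min\{\beta:\beta=[\alpha]\})$ is describable. So $\tau_0$ is a $k$-fold reflecting ordinal above $\alpha$, and it is countable.

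\emph{Step 2 (reflection into $L_{\theta_\emptyset}$).} The property ``$\eta$ is $k$-fold reflecting'' is expressible in $\GDST$. By Proposition \ref{propositionselfmeta} there is a term $F(z,\eta)$ computing $f_z(\eta)$ at larger stages, and the witnesses $\eta_1,\dots,\eta_k$ can be bounded-quantified once we are at a stage beyond $\eta_k$. So consider the term
\[
E(\alpha)=\min\{\sigma:\exists\eta<\sigma\,(\eta>\alpha\wedge \exists \eta_1<\dots<\eta_k<\sigma\ \textstyle\bigwedge_i F(\eta_i,\eta_{i+1})=\eta_i)\}.
\]
The truth values are eventually defined, and by Step 1 the set of ordinals satisfying the condition is nonempty below $\omega_1$. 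Since $\alpha<\theta_\emptyset$ is describable (Observation \ref{observationexistsA}), if $E(\alpha)$ took the value $\theta_\emptyset$, or any value $\geq\theta_\emptyset$, we would get a contradiction: $\theta_\emptyset$ is reflecting ($f_\emptyset(\theta_\emptyset+1)=\theta_\emptyset$), and by maximality $f_\emptyset(\eta)\le\theta_\emptyset$ for all larger $\eta$. I would then run the argument used for the idempotence of $g$: the term's value at $\theta_\emptyset+1$ cannot be $\theta_\emptyset$, hence a witness lies below $\theta_\emptyset$. The chain $\eta_i$ lies in $L_{\theta_\emptyset}$, and $L_{\theta_\emptyset}$ computes $f$ correctly, since $f_{\eta_i}(\eta_{i+1})$ depends only on $I_{\eta_{i+1}}$.

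The main obstacle is this last transfer. Knowing that a witness exists below $\omega_1$ does not immediately put one below $\theta_\emptyset$. The min-term is only defined at a stage exceeding the least witness, and the condition may be undefined at intermediate $\sigma$. The plan is to argue via Corollary \ref{corollarythetaz}-style maximality: the least witness $\sigma^*$ is itself describable from $\alpha$ at $\sigma^*+1$. This, combined with $\theta_\emptyset$ being the largest fixed point of $f_\emptyset$, forces $\sigma^*<\theta_\emptyset$; otherwise $f_\emptyset(\sigma^*+1)$ would exceed $\theta_\emptyset$ via a chain of describability arguments. Getting this chain right is where I expect the real work.
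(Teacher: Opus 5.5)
\textbf{Step 1 does not produce a strictly increasing chain, and the proposed repair fails.} With the parameter set frozen at $\{\alpha\}$, nothing prevents $\tau_{k-1}=f_{\{\alpha\}}(\omega_1)$ from already being $\{\alpha\}$-descriptionalist (e.g.\ if it equals $\theta_{\{\alpha\}}$). In that case $\tau_{k-2}=f_{\{\alpha\}}(\tau_{k-1})=\tau_{k-1}$ and the descent stalls.

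Passing to $\tau_{i+1}+1$ does not help. Maximality gives $f_{\{\alpha\}}(\theta_{\{\alpha\}}+1)=\theta_{\{\alpha\}}$, so the value drops only from $\tau_{i+1}+1$ to $\tau_{i+1}$. Your other clause (``$\eta$ defined from $z$ at the next stage'') never applies, since each $\tau_{i+1}$ is by construction not describable from $\{\alpha\}$.

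The missing idea is to enlarge the parameter set by each new chain element and build upward: $z_0=\emptyset$, $\theta_i=\theta_{z_i}$, $z_{i+1}=z_i\cup\{\theta_i\}$. Since $\theta_i\in z_{i+1}$, $\theta_i+1$ is $z_{i+1}$-descriptionalist, so $\theta_i<\theta_{i+1}$. Maximality of $\theta_i$ gives $f_{z_i}(\theta_{i+1})=\theta_i$: a description of $\theta_i$ from $z_i$ at $\theta_{i+1}$ could be substituted into the $z_{i+1}$-descriptions of all ordinals below $\theta_{i+1}$, making $\theta_{i+1}$ $z_i$-descriptionalist. Maximality of $\theta_k$ likewise gives $f_{z_k}(\theta_k+1)=\theta_k$.

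\textbf{Step 2 has exactly the gap you flag, and your term $E(\alpha)$ cannot close it.} Minimality makes the elements of the least chain describable, but not the ordinals between them. So you never obtain an $\emptyset$-descriptionalist ordinal above the chain, and maximality of $\theta_\emptyset$ has nothing to act on. Two further inferences are also unjustified:
\begin{enumerate}
\item Ruling out the value $\theta_\emptyset$ at stage $\theta_\emptyset+1$ does not show that $E(\alpha)$ is defined there.
\item ``$f_\emptyset(\eta)\le\theta_\emptyset$ for all larger $\eta$'' does not follow, since maximality only excludes fixed points above $\theta_\emptyset$, not values of $f_\emptyset$ above it.
\end{enumerate}

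The paper instead searches for chains $\beta<\alpha_0<\cdots<\alpha_k$ satisfying stronger finite-parameter conditions: $f_{\{\alpha_0,\ldots,\alpha_{i-1}\}}(\alpha_{i+1})=\alpha_i$ for $i<k$, and $f_{\{\alpha_0,\ldots,\alpha_{k-1}\}}(\alpha_k+1)=\alpha_k$. The $\theta_{z_i}$-chain witnesses this for any $\beta<\theta_\emptyset$. Your Step 1 chain could not, since its top is $\omega_1$ and $f_z(\omega_1+1)$ is countable for finite $z$.

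Nested minimization terms $A_k(\beta),\ldots,A_0(\beta)$ then pick out a least chain $\eta_0<\cdots<\eta_k$ whose elements are describable from $\beta$. The top clause makes every ordinal $\le\eta_k+1$ describable from $\beta$ at a stage just above $\eta_k$, and hence, as $\beta<\theta_\emptyset$, from nothing. This gives an $\emptyset$-descriptionalist ordinal above $\eta_k$, hence $\eta_k<\theta_\emptyset$. The lower clauses give $f_{\eta_i}(\eta_{i+1})=\eta_i$ by substitution.
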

\begin{proof}
Define inductively $z_0 = \emptyset$, $\theta_i = \theta_{z_i}$, and $z_{i + 1} = z_i \cup \{\theta_i\}$, all of which exist by Corollary \ref{corollarythetaz}. Since $\theta_i \in z_{i + 1}$ we have $\theta_i \neq f_{z_{i + 1}}(\theta_i + 1)$, and since $z_i \subseteq z_{i + 1}$ we have $f_{z_{i + 1}}(\theta_i + 1) \geq f_{z_i}(\theta_i) = \theta_i$, so $f_{z_{i + 1}}(\theta_i + 1) = \theta_i + 1$. It follows from the definition of $\theta_{i + 1}$ that $\theta_i + 1 \leq \theta_{i + 1}$, or equivalently $\theta_i < \theta_{i + 1}$.

We claim that $f_{z_i}(\theta_{i + 1}) = \theta_i$ for all $i$. Indeed, $f_{z_i}(\theta_{i + 1}) \geq f_{z_i}(\theta_i) = \theta_i$, so it suffices to show that $j_{\theta_{i + 1}}(A[\theta_0,\ldots,\theta_{i - 1}]) \neq \theta_i$ for all $A\in\TT_\GDST(0,i)$ By contradiction suppose $j_{\theta_{i + 1}}(A[\ww]) = \theta_i$. Since $\theta_{i + 1} = f_{z_{i + 1}}(\theta_{i + 1})$, for all $\beta < \theta_{i + 1}$ there exist $B\in \TT_\GDST(0,i + 1)$ such that $j_{\theta_{i + 1}}(B[\theta_0,\ldots,\theta_i]) = \beta$. It follows that $j_{\theta_{i + 1}}(B([\theta_0],\ldots,[\theta_{i - 1}],A[\theta_0,\ldots,\theta_{i - 1}])) = \beta$, so $f_{z_i}(\theta_{i + 1}) = \theta_{i + 1}$, contradicting the maximality of $\theta_i$.

Now let
\[
P(\beta,\alpha_0,\ldots,\alpha_k) \equiv \bigwedge_{i=0}^{k - 1} \big(F_{\{\alpha_0,\ldots,\alpha_{i - 1}\}}(\alpha_{i + 1}) = \alpha_i < \alpha_{i + 1}\big) \wedge \big(F_{\{\alpha_0,\ldots,\alpha_{k - 1}\}}(\alpha_k + 1) = \alpha_k\big) \wedge (\alpha_0 > \beta)
\]
and reverse-inductively for $i = k,\ldots,0$ let
\[
A_i(\beta) = \min\{\alpha : \exists \alpha_0,\ldots,\alpha_{i - 1} < \alpha \text{ satisfying } P(\beta,\alpha_0,\ldots,\alpha_{i - 1},\alpha,A_{i + 1},\ldots,A_k)\}
\]
Then $\iota_{\theta_k + 2}(P[\beta,\theta_0,\ldots,\theta_k]) = \top$, so $\eta_k := j_{\theta_k + 2}(A_k[\beta]) \neq U$. It is easy to see that this implies $\eta_k = j_{\eta_k + 2}(A_k)$, and by reverse induction we have $\eta_i := j_{\eta_k + 2}(A_i[\beta]) \neq U$ for $i = 0,\ldots,k - 1$; moreover, $\iota_{\eta_k + 2}(P[\beta,\eta_0,\ldots,\eta_k]) = \top$. So $\eta_k + 1 = f_{\{\eta_0,\ldots,\eta_k\}}(\eta_k + 1) = f_\smallemptyset(\eta_k + 1)$, which implies that $\eta_k < \eta_k + 1 \leq \theta_\smallemptyset$. On the other hand, we have $\iota_{\eta_k + 2}(P[\beta,\eta_0,\ldots,\eta_k]) = \top$, so $f_{\{\eta_0,\ldots,\eta_{i - 1}\}}(\eta_{i + 1}) = \eta_i$ for $i = 0,\ldots,k - 1$. Thus, $L_{\theta_\smallemptyset}$ satisfies that $\eta_0$ is a $k$-fold reflecting ordinal greater than $\beta$.
\end{proof}

%\begin{proof}[Alternate proof]
%
%Let
%\begin{align*}
%A_k(\alpha_0,\ldots,\alpha_{k - 1}) &= \min\{ \alpha : F_{\{\alpha_0,\ldots,\alpha_{k - 1}\}}(\alpha + 1) = \alpha\}\\
%A_{i,i}(\alpha_0,\ldots,\alpha_{i - 1}) = A_i(\alpha_0,\ldots,\alpha_{i - 1}) &= \min\{ \alpha : F_{\{\alpha_0,\ldots,\alpha_{i - 1}\}}(A_{i + 1}(\alpha_0,\ldots,\alpha_{i - 1},\alpha)) = \alpha\}\\
%A_{i,j}(\alpha_0,\ldots,\alpha_{i - 1}) &= A_{i + 1,j}(\alpha_0,\ldots,\alpha_{i - 1},A_i(\alpha_0,\ldots,\alpha_{i - 1}))
%\end{align*}
%Then for all $\alpha_0 < \ldots < \alpha_{k - 1} < \theta_0$ we have $j_{\theta_1}(A_k[\alpha_0,\ldots,\alpha_{k - 1}]) \leq \theta_0$, and since $f_\smallemptyset(\theta_1) = \theta_0$ we have $j_{\theta_1}(A_k[\alpha_0,\ldots,\alpha_{k - 1}]) < \theta_0$.
%
%Suppose $i = 0,\ldots,k - 1$ is such that $j_{\theta_1}(A_{i + 1,j}[\alpha_0,\ldots,\alpha_i]) < \theta_0$ for all $j = i + 1,\ldots,k$ and $\alpha_0 < \ldots < \alpha_i < \theta_0$.
%\end{proof}

\begin{definition}
Fix $\AA \subseteq \P(\N)$,\footnote{In the framework of $\KP$, $\P(\N)$ may be a proper class; what we mean is to fix a set $\AA$ such that for all $w\in \AA$ we have $w\subseteq \N$.} and let  $t:\AA \to \AA$ be a map such that $t(w) \supseteq w$ for all $w\in\AA$, and for $w_0\in\AA$ consider the family of sets $(t^\alpha(w_0))_\alpha$ defined recursively as follows:
\begin{itemize}
\item Let $t^0(w_0) = w_0$. 
\item At any given successor ordinal stage $\alpha + 1$, suppose $t^\alpha(w_0)\in\AA$ is defined. Let $t^{\alpha + 1}(w_0) = t(t^\alpha(w_0))$.
\item At any given limit ordinal stage $\alpha$, let $t^\alpha(w_0) = \bigcup_{\beta < \alpha} t^\beta(w_0)$. Note that \emph{a priori} we do not know whether $t^\alpha(w_0)$ is in $\AA$.
\item Finally, let $t^\infty(w_0) = \bigcup_\alpha t^\alpha(w_0)$, assuming this union exists.
\end{itemize}
We say that $t$ \emph{acts $\AA$-regularly on $w_0$} if $t^\alpha(w_0) \in \AA$ is defined for all $\alpha$. If in addition $t^\infty(w_0)\in\AA$ is defined, we say that $t$ \emph{acts infinitely $\AA$-regularly on $w_0$}.
\end{definition}

\begin{observation}
\label{observationinfinitelyregularly}
Assume either that there exists an uncountable ordinal, or that $\Sigma_1$-separation holds. Then every map that acts $\AA$-regularly on $w_0$ also acts infinitely $\AA$-regularly on $w_0$, and moreover for each such map $t$ there exists a countable ordinal $\alpha$ such that $t^\beta(w_0) = t^\infty(w_0)$ for all $\beta\geq \alpha$.
\end{observation}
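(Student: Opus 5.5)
The plan rests on a counting argument. The sequence $(t^\alpha(w_0))_\alpha$ is weakly increasing, because $t(w)\supseteq w$ at successor stages and unions are taken at limit stages. Every stage is a subset of $\N$. Consequently, only countably many stages can be strictly larger than their predecessor: each strict increase $t^{\alpha+1}(w_0)\supsetneq t^\alpha(w_0)$ adds a natural number $n_\alpha$ that was not present at any earlier stage. The map $\alpha\mapsto n_\alpha$, defined on the set of $\alpha$ at which the sequence strictly increases, is therefore injective into $\N$. Moreover, once $t^{\alpha+1}(w_0)=t^\alpha(w_0)$, an easy induction shows $t^\beta(w_0)=t^\alpha(w_0)$ for all $\beta\ge\alpha$. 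So the sequence strictly increases on an initial segment of the ordinals and is constant thereafter.

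First I will show that some stage stabilizes, i.e.\ that there is $\alpha$ with $t^{\alpha+1}(w_0)=t^\alpha(w_0)$. Suppose not. Then $\alpha\mapsto n_\alpha$ is an injection from the entire class of ordinals into $\N$. Under the first hypothesis, restrict this map to $\omega_1$. This gives an injection of an uncountable ordinal into $\N$, which is a contradiction; to do this inside $\KP$, the restricted map must be a set, which I will get by $\Delta_0$-separation / $\Sigma_1$-recursion applied to the set-sized portion $(t^\beta(w_0))_{\beta<\omega_1}$. Under the second hypothesis, I will instead use $\Sigma_1$-separation to form the set $S=\{n\in\N : \exists\alpha\; n=n_\alpha\}$. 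The predicate ``$n$ is first added at stage $\alpha$'' is $\Sigma_1$ via the recursively defined sequence. Then the map $n\mapsto$ (the unique $\alpha$ with $n_\alpha=n$) is a $\Sigma_1$-definable function on the set $S$. Applying $\Sigma_1$-replacement (available from $\Sigma_1$-collection plus $\Sigma_1$-separation in $\KP$), its image is a set of ordinals and hence bounded by some $\gamma$. Then $n_\gamma$ cannot exist, which is the desired contradiction.

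Given the stabilization point $\alpha_0$, the least such $\alpha$, set $t^\infty(w_0)=t^{\alpha_0}(w_0)$. This is a legitimate set: it equals $\bigcup_{\beta\le\alpha_0}t^\beta(w_0)$, a union over a set index. It also lies in $\AA$, because regularity already gives $t^{\alpha_0}(w_0)\in\AA$. Hence $t$ acts infinitely $\AA$-regularly on $w_0$, and $t^\beta(w_0)=t^\infty(w_0)$ for all $\beta\ge\alpha_0$. Finally, $\alpha_0$ is countable because $\beta\mapsto n_\beta$ injects $\alpha_0$ into $\N$, and this map is a set since it is defined on a set.

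The main obstacle is the second case: ensuring that the class-length recursion and the ``first added at stage'' predicate are genuinely $\Sigma_1$. For this I will use the uniqueness of the recursive sequence up to each $\alpha$, which holds by $\Delta_0$ induction, so that $\Sigma_1$-separation and the boundedness argument apply cleanly.
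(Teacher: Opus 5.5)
Your proposal is correct and follows essentially the same route as the paper. Both arguments inject the stages at which the sequence strictly increases into $\mathbb{N}$. In the $\omega_1$ case this yields an injection $\omega_1\to\mathbb{N}$. In the $\Sigma_1$-separation case you form the set of newly added numbers and bound the corresponding stages, using $\Sigma_1$-replacement where the paper uses $\Delta_0$-collection. You additionally spell out two points the paper leaves implicit: why the least stabilization stage is countable, and why $t^\infty(w_0)=t^{\alpha_0}(w_0)\in\mathcal{A}$.
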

\begin{proof}
It suffices to show that $t^{\alpha + 1}(w_0) \setminus t^\alpha(w_0) = \emptyset$ for some $\alpha$. Indeed, there is a $\Delta_0$ formula $P(n,\alpha,x)$ such that $n\in x = t^{\alpha + 1}(w_0) \setminus t^\alpha(w_0)$ if and only if $P(n,\alpha,x)$ holds. So if $\Sigma_1$-separation holds, then the set $\{n : \exists \alpha,x \; P(n,\alpha,x)\}$ exists, and thus by $\Delta_0$-collection the set $\{\alpha : \exists n,x \; P(n,\alpha,x)\}$ exists. Since the ordinals are a proper class this implies that there is some $\alpha$ not in this set, which yields the conclusion.

On the other hand, if there exists an uncountable ordinal $\omega_1$ and the conclusion fails, then $\Delta_0$-collection proves the existence of a function $f:\omega_1 \to \N$ such that $\exists x\; P(f(\alpha),\alpha,x)$ holds for all $\alpha$. It follows that $f$ is injective, contradicting the uncountability of $\omega_1$.
\end{proof}

Although Observation \ref{observationinfinitelyregularly} is illustrative, from a gradualist descriptionalist point of view neither of its possible assumptions are valid. Instead, we make the following intuitive argument to establish its conclusion in a way consistent with gradualist descriptionalist principles: Consider the process defining the transfinite sequence $(t^\alpha(w_0))_\alpha$. At any stage $\alpha$ such that $t^{\alpha + 1}(w_0) \setminus t^\alpha(w_0) \neq \emptyset$, we can give $\alpha$ a natural number label chosen from the set $t^{\alpha + 1}(w_0) \setminus t^\alpha(w_0)$. This label is unique to $\alpha$, so the existence of $\alpha$ does not contradict descriptionalism. Since the set $t^\alpha(w_0)$ increases with respect to $\alpha$, eventually the process will ``run out of room'' and be forced to satisfy $t^{\alpha + 1}(w_0) \setminus t^\alpha(w_0) = \emptyset$.

This idea can be made more concrete in the following way: Imagine a world in which there is an infinite array of lightswitches, the $n$th of which is initally set in either the ``off'' or ``on'' position depending on whether $n\in w_0$. The laws of physics of this universe are such that whenever no lightswitch is in the process of switching from ``off'' to ``on'', if $w$ is the set of all lightswitches currently in the ``on'' position then all lightswitches corresponding to elements of $t(w)\setminus w$ begin switching from ``off'' to ``on'', a process which takes $2^{-\min(t(w)\setminus w)}$ seconds. It is easily seen that the total amount of time the process takes before halting is $\leq 1$ second, proving that the process does indeed halt.

\begin{proposition}
\label{propositionTxalpha}
Let $\theta = f_\theta(\tau) < \tau$ with $\tau$ reflecting (so that $\theta$ is 2-fold reflecting), and let $T_\xx(w)$ be a set term of $\GDST$ with a set input $w$ and other inputs $\xx$. Suppose that for all $w\in \AA_\theta := L_\theta \cap \P(\N)$ and $\xx\in L_\theta^M\times \theta^N$, we have $w\subseteq j'_\theta(T_{[\xx]}[w])\in \P(\N)$ (in particular $j_\theta(T_{[\xx]}[w]) \neq U$), and that the same is also true with $\theta$ replaced by $\tau$. Consider the function $t_\xx: \AA_\theta \to \AA_\theta$ defined by the formula
\[
t_\xx(w) = j'_\theta(T_{[\xx]}[w]).
\]
Then for all $\xx\in L_\theta^M \times \theta^N$ and $w_0\in \AA_\theta$, the map $t_\xx$ acts infinitely $\AA_\theta$-regularly on $w_0$. Even stronger, there exist formulas $T_\xx^\alpha(w_0)$ an $T_\xx^\infty(w_0)$ of $\GDST$ such that
\begin{align} \label{Txalpharecursive}
j'_\theta(T_{[\xx]}^{[\alpha]}[w_0]) &= t_\xx^\alpha(w_0) \in \AA_\theta\\ \label{Txinfty}
j'_\theta(T_{[\xx]}^\infty[w_0]) &= t_\xx^\infty(w_0) \in \AA_\theta
\end{align}
for all $\alpha < \theta$, $w_0\in \AA_\theta$, and $\xx\in L_\theta^M \times \theta^N$.
\end{proposition}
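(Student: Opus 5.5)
The plan is to build the iterates $t_\xx^\alpha(w_0)$ by a $\GDST$-definable transfinite recursion, and to use the two reflection hypotheses at two different points. The hypothesis $f_\theta(\tau) = \theta$ will keep every stage of the recursion below $\theta$. The reflecting property of $\tau$ will make the ``halting stage'' a well-defined term, so that $\theta$ itself cannot be that stage.

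\emph{Step 1 (the term $T_\xx^\alpha$).} I would first define an auxiliary ordinal term
\[
\Gamma(\alpha) = \min\{\gamma : \exists s\in L_\gamma\ \Phi(s,\alpha)\},
\]
where $\Phi(s,\alpha)$ says that $s$ is a function with domain $\alpha+1$ satisfying $s(0) = w_0$, $s(\beta+1) = \overline\eval_{j'}(T,(\xx,s(\beta)))$, and $s(\lambda) = \bigcup_{\beta<\lambda} s(\beta)$ at limits $\lambda$. The evaluation uses Corollary \ref{corollaryselfmeta}, or $\eval_{j'}$ at an explicit limit level via Proposition \ref{propositionselfmeta}, so that $\Phi$ is a formula of $\GDST$. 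I then set $T_\xx^\alpha(w_0)$ to be the value $s(\alpha)$ of the unique such $s$ in $L_{\Gamma(\alpha)}$.

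I would prove by induction on $\alpha<\theta$ that $j_\theta(\Gamma[\alpha]) \neq U$. Assume the sequence up to each $\beta<\alpha$ lies in $L_\theta$. Reflecting ordinals are admissible (Proposition \ref{propositionadmissible}), so collection in $L_\theta$ gathers these sequences below $\theta$. Alternatively, more in the spirit of the paper: $\Gamma(\alpha)$ evaluated in $\tau$ is defined and at most $\theta$, by monotonicity. It is also a term in parameters from $L_\theta$, hence from $\theta^*$ after describing the parameters by ordinals. So it cannot equal $\theta$, because $f_\theta(\tau) = \theta$.

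The union at a limit $\lambda$ is $\Delta_0$ in the sequence, so $t^\lambda_\xx(w_0)\in\AA_\theta$. The successor step uses the hypothesis $w\subseteq j'_\theta(T_{[\xx]}[w])\in\P(\N)$. This gives \eqref{Txalpharecursive}.

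\emph{Step 2 (the same recursion at $\tau$).} Since the hypotheses on $T$ also hold with $\theta$ replaced by $\tau$, and $\tau$ is reflecting, the same argument run in $L_\tau$ shows that all $t_\xx^\alpha(w_0)$ for $\alpha<\tau$ are defined and evaluated by $j_\tau$. By Observation \ref{observationmonotone}, these values agree with the $j_\theta$-values for $\alpha<\theta$.

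\emph{Step 3 (stabilization; the main obstacle).} This step is the formal version of the lightswitch argument. For each $n\in\N$ I would set
\[
E_n = \min\{\alpha : n\in T_\xx^{\alpha+1}(w_0)\},
\]
so that every non-stable stage $\alpha$, meaning one with $t^{\alpha+1}\neq t^\alpha$, equals $j_\tau(E_n)$ for some $n$. Now let
\[
R = \min\{\alpha : \forall n\in\N\ \neg(E_n = \alpha)\}.
\]
Making sense of ``$E_n=\alpha$ or $E_n$ undefined'' requires $\eval_j$ with the marker $U'$.

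At level $\tau+1$ the term $R$ is defined and $\le\tau$. Since $\tau$ is reflecting, $j_{\tau+1}(R)\neq\tau$, so $\rho := j_\tau(R) < \tau$, and $\rho$ is a stable stage. Because $t(w)\supseteq w$, every stage after a stable stage is stable, so $t^\beta = t^\rho$ for all $\beta\ge\rho$.

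It remains to show $\rho<\theta$. If $\rho > \theta$, then $\theta$ is non-stable, so $\theta = j_\tau(E_n)$. If $\rho=\theta$, then $\theta = j_\tau(R)$. Either way $\theta$ is the value under $j_\tau$ of a term with parameters in $L_\theta$, contradicting $f_\theta(\tau)=\theta$. Hence $\rho<\theta$.

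I would then define $T_\xx^\infty(w_0) = T_\xx^{R}(w_0)$. Its $j_\theta$-value is $t^\rho_\xx(w_0) = t^\infty_\xx(w_0)\in\AA_\theta$, which gives \eqref{Txinfty} and infinite $\AA_\theta$-regularity.

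The delicate points are checking that the undefinedness semantics of $\min$ does not spoil $E_n$ and $R$, and justifying the conversion of parameters in $L_\theta$ into ordinal parameters in $\theta^*$ via Corollary \ref{corollaryexistsX}.
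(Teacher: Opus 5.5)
Your overall plan matches the paper's. The iterates are given by a term that selects the least level $L_\gamma$ containing a compatible initial segment of the iteration. The same recursion is evaluated at $\tau$. Stabilization comes from labelling each non-stable stage by a natural number that enters there: then $\theta$ can be neither a non-stable stage nor the first stable one without being $j_\tau$ of a term with parameters below $\theta$.

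Step 3 is in substance the paper's argument, but the detour through $R$ at level $\tau+1$ is superfluous. Your $E_n$ argument already shows directly that $\theta$ is stable at $\tau$. After that, the paper simply reflects $\min\{\alpha : T_\xx^{\alpha+1}(w_0) = T_\xx^{\alpha}(w_0)\}$, which is then defined at $\tau$ and at most $\theta$, hence below $\theta$. This also spares you the $U'$ bookkeeping for the $E_n$, and the use of $\eval_j$ at the non-limit level $\tau+1$, which Proposition \ref{propositionselfmeta} as stated does not cover.

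The genuine difference is in Step 1. Your first route, admissibility of $\theta$ plus $\Sigma_1$-recursion in $L_\theta$, is sound once you record that $w\mapsto j'_\theta(T_{[\xx]}[w])$ is $\Sigma_1$ over $L_\theta$. By Observations \ref{observationmonotone} and \ref{observationcontinuous}, $j'_\theta(T_{[\xx]}[w]) = y$ iff $j'_\eta(T_{[\xx]}[w]) = y$ for some $\eta<\theta$. For a fixed term, the level-$\eta$ semantics is a finite recursion that quantifies only over $L_\eta$ and $\eta$. This route imports standard admissible-set recursion theory, whereas the paper stays inside the reflection calculus.

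Your alternative ``in the spirit of the paper'', however, is circular as written. $\Gamma(\alpha)$ is the least $\gamma$ with the length-$(\alpha+1)$ sequence in $L_\gamma$, so it is always a successor ordinal, and ``$\neq\theta$'' is vacuous. Meanwhile ``$j_\tau(\Gamma[\alpha])\le\theta$'' is precisely the assertion that the sequence lies in $L_\theta$. Monotonicity gives this only if $\Gamma(\alpha)$ is already defined at $\theta$, which is what you are proving; the successor case is trivial, but the limit case is not.

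The paper instead applies reflection to the bound for the \emph{earlier} stages, $\min\{\eta : \forall\beta<[\alpha]\;\exists\eta'<\eta\; H_{[w_0,\xx],\eta'}(\beta)\downarrow\}$. Its value at $\tau$ is at most $\theta$ by the induction hypothesis, hence below $\theta$ by $f_\theta(\tau)=\theta$. This puts $h\upharpoonright\alpha$ into $L_\theta$, and appending $(\alpha,y)$ completes the inductive step.
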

\begin{proof}
For an ordinal $\eta < \theta$, we call a map $h:\eta\to \P(\N)$ \emph{$(T_\xx,\theta,w_0)$-compatible} if $h(0) = w_0$, $h(\alpha) = \bigcup_{\beta < \alpha} h(\beta)$ for limit $\alpha < \eta$, and $h(\alpha + 1) = j'_\theta(T_{[\xx]}[h(\alpha)])$ for all $\alpha +  1 < \eta$. Note that there is a formula $P(h) = P_T(h,w_0,\xx)$ such that $h\in L_\theta$ is $(T_\xx,\theta,w_0)$-compatible if and only if $\iota_\theta(P[h,w_0,\xx]) = \top$. Now let
%If $\eta < \theta$, we call a function $h:\eta \to L_\theta$ \emph{$(T,\theta)$-compatible} if
%\[
%h(\alpha) = j'_\theta(T[h\given \alpha,\xx]) \text{ for all } \alpha < \eta.
%\]
%It is not hard to show that there is a formula $P(h,\xx)$ such that $\iota_\theta(P[h,\xx]) = \top$ if and only if $h$ is $(T,\theta)$-compatible. We now define
\begin{align*}
H_{w_0,\xx,\eta} &:= \{(\alpha,y) : \exists h\in L_\eta \text{ such that } P(h,w_0,\xx) \text{ and } h(\alpha) = y\}\\
T_\xx^\alpha(w_0) &:= H_{\xx,\min\{\eta : H_{w_0,\xx,\eta}(\alpha)\downarrow\}}(\alpha).
\end{align*}
(Here $H(\alpha)$ denotes function application, so that $j'_\theta(H[\alpha])$ is the image of $\alpha$ under the function $j'_\theta(H)$, rather than denoting that the $\GDST$ term $H$ has $\alpha$ as a free variable. So $H(\alpha)\downarrow$ means ``the function $H$ is well-defined at $\alpha$''.)

We need to show that \eqref{Txalpharecursive} holds for all $\alpha < \theta$. We proceed by induction; suppose it holds for all $\beta < \alpha$, for some $\alpha < \theta$. It follows from the definition of $T_\xx^\alpha$ that
\[
\iota_\tau(\forall \beta < [\alpha] \;\exists \eta < [\theta] \; H_{[w_0,\xx],\eta}(\beta)\downarrow ) = \top
\]
and by the definition of $f_\theta$ together with the fact that $\theta = f_\theta(\tau) < \tau$, there exists $\eta < \theta$ such that
\[
\iota_\tau(\forall \beta < [\alpha] \;\exists \eta' < [\eta] \; H_{[w_0,\xx],\eta'}(\beta)\downarrow ) = \top
\]
and thus
\[
\iota_\theta(\forall \beta < [\alpha] \; H_{[w_0,\xx,\eta]}(\beta)\downarrow )
= \iota_\tau(\forall \beta < [\alpha] \; H_{[w_0,\xx,\eta]}(\beta)\downarrow ) = \top
\]
from which it follows that $h_{w_0,\xx}\given \alpha = j'_\theta(H_{[w_0,\xx,\eta]}) \cap (\alpha\times L_\theta) \in L_\theta$, where $h_{w_0,\xx}(\beta) := j'_\theta(T_{[\xx]}^{[\beta]}[w_0])$ for all $\beta < \theta$ such that this expression is defined. Now let $y = t_\xx^\alpha(w_0) = j'_\theta( \bigcup_{\beta < [\alpha]} T_{[\xx]}(H_{[w_0,\xx,\eta]}(\beta))) \in \AA_\theta$. The function
\[
h := ( h_{w_0,\xx}\given\alpha ) \cup \{(\alpha,y)\} \in L_\theta
\]
is $(T_\xx,\theta,w_0)$-compatible and thus $h \subseteq j'_\theta(H_{[w_0,\xx,\eta]}) \subseteq h_{w_0,\xx}$ for some $\eta < \theta$. Thus $h_{w_0,\xx}(\alpha) = y$, which completes the proof of \eqref{Txalpharecursive}.

We proceed to prove \eqref{Txinfty}. Note that by our assumptions on $\tau$,
\begin{align*}
j'_\tau\left(T_{[\xx]}^{[\theta]}[w_0]\right) = j'_\tau\left(\bigcup_{\alpha < [\theta]} T_{[\xx]}^\alpha[w_0]\right) &\in L_\tau \cap \P(\N)\\
j'_\tau\left(T_{[\xx]}^{[\theta + 1]}[w_0]\right) = j'_\tau\left(T_{[\xx]}\big[j'_\tau(T_{[\xx]}^{[\theta]}[w_0])\big]\right) &\in L_\tau \cap \P(\N).
\end{align*}
Since by assumption $\theta = f_\theta(\tau) < \tau$, for each $n \in \N$ we have
\begin{align*}
n\notin j'_\tau\left(T_{[\xx]}^{[\theta + 1]}[w_0]\right) \setminus j'_\tau\left(T_{[\xx]}^{[\theta]}[w_0]
\right)
\end{align*}
since otherwise $\theta = j_\tau(\min\{\alpha : [n]\in T_{[\xx]}^{\alpha + 1}[w_0] \setminus T_{[\xx]}^\alpha[w_0]\})$. It follows that
\[
j'_\tau\left(T_{[\xx]}^{[\theta + 1]}[w_0])\right) = j'_\tau\left(T_{[\xx]}^{[\theta]}[w_0]\right)
\]
and so since $\theta = f_\theta(\tau) < \tau$, there exists $\alpha < \theta$ such that
\[
j'_\tau\left(T_{[\xx]}^{[\alpha + 1]}[w_0])\right) = j'_\tau\left(T_{[\xx]}^{[\alpha]}[w_0]\right)
\]
and thus
\[
j'_\theta\left(T_{[\xx]}^{[\alpha + 1]}[w_0])\right) = j'_\theta\left(T_{[\xx]}^{[\alpha]}[w_0]\right).
\]
Letting
\[
T_\xx^\infty(w_0) = T_\xx^{\min\{\alpha : T_\xx^{\alpha + 1}(w_0) = T_\xx^{\alpha}(w_0)\}}(w_0)
\]
finishes the proof.
\end{proof}

%\begin{proposition}
%There exist $\eta_k$ increasing to $\theta = f_\theta(\tau) < \tau < \theta_\smallemptyset$ such that $f_{\eta_k}(\theta) = \eta_k$.
%\end{proposition}
%\begin{proof}
%Let $\theta_k = f_{w_k}(\omega_1)$, where $w_0 = \emptyset$ and $w_{k + 1} = w_k \cup \{\theta_k\}$. Then let
%\[
%P(\eta) \equiv f_\eta(\eta + 1) = \eta \wedge \exists \theta \leq \eta \;\; \forall \alpha_0 < \theta \;\; \exists \alpha \in (\alpha_0,\theta), \; z \;\; f_z(\theta) = \alpha
%\]
%We have $\iota_{\omega_1 + 2}(P[\omega_1]) = \top$, so
%\[
%\eta := j_{\omega_1 + 2}(\min\{\alpha : P(\alpha)\}) \neq U.
%\]
%Let $\theta_k = j_\eta(\min\{\alpha > \theta_{k - 1} : \overline F_\alpha = \alpha\})$, and let $\theta = \sup \theta_k \leq \eta$. Clearly, $f_{\theta_k}(\theta) = \theta_k$ for all $k$. If $\theta < f_\theta(\eta)$, then for some $k$ we have $\theta < f_{\{\theta_0,\ldots,\theta_k\}}(\eta)$, but $\theta_k = j_\eta(\min\{\alpha > \theta_{k - 1} : F_\alpha[\theta] = \alpha, []\})$ [Cont\internal]
%
%\end{proof}

\begin{corollary}
\label{corollaryID}
Consider the sublanguage $\NMID \subseteq \GDST$ generated by production rules \text{(1-5)}, and \text{(6b)}:
\begin{itemize}
\item[(6b)] If $T_\xx(w)\in \TT_\NMID(M + 1,N)$ is a term depending on a set variable $w$ and other variables $\xx$, then $T_\xx^\infty(w_0) \in \TT_\NMID(M + 1,N)$, where $T_\xx^\infty(w_0)$ is as in Proposition \ref{propositionTxalpha}.
\end{itemize}
Suppose that for all $k\in\N$ there exist arbitrarily large $k$-fold reflecting ordinals. Then for all $P,A,X,\xx$ there exists $\eta$ such that $\iota_\eta(P[\xx]) \neq U$, $j_\eta(A[\xx]) \neq U$, and $j_\eta'(X[\xx]) \neq U$.
\end{corollary}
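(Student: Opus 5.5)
We argue by induction on the \emph{nesting depth} of constructor (6b) in the expression, proving a stronger, uniform statement. For $d\in\N$, let $\NMID_d$ be the expressions of $\NMID$ in which (6b) is nested at most $d$ deep. The claim $(\ast)_d$ is: if $\theta_0<\theta_1<\cdots<\theta_d$ witness that $\theta_0$ is $d$-fold reflecting (so $f_{\theta_i}(\theta_{i+1})=\theta_i$), then for every $P,A,X\in\NMID_d$ and every $\xx\in L_{\theta_0}^M\times\theta_0^N$, the values $\iota_{\theta_0}(P[\xx])$, $j_{\theta_0}(A[\xx])$, $j'_{\theta_0}(X[\xx])$ are all $\neq U$. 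This has to hold not only at $\theta_0$ but at every $\theta_i$ for $i<d$, with the depth bound reduced to $d-i$.

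Given $(\ast)_d$, the Corollary follows directly. An expression has some finite depth $d$ and its parameters $\xx$ lie in some $L_\beta$. By hypothesis there is a $d$-fold reflecting $\eta>\beta$, and we take that $\eta$.

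The base case $d=0$ is Observation \ref{observationignoreconstructor}: expressions built only from constructors (1)--(5) never take the value $U$, for any $\xx$. In particular, a term $L_A$ is defined as long as $A$ is, and $j_\eta(A)<\eta$ is automatic.

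For the inductive step, take an expression of depth $d+1$ and a chain $\theta_0<\cdots<\theta_{d+1}$. Do an inner induction on the syntactic build-up of the expression. Constructors (1)--(5) preserve definedness, since the components are defined at $\theta_0$ and everything stays inside $L_{\theta_0}$. The key case is an outermost occurrence $T_\xx^\infty(w_0)$ with $T\in\NMID_d$. Apply Proposition \ref{propositionTxalpha} with $\theta=\theta_0$ and $\tau=\theta_1$. Two hypotheses must be checked:
\begin{itemize}
\item[(a)] $\tau=\theta_1$ is reflecting. This holds because $f_{\theta_1}(\theta_2)=\theta_1$, since $d+1\ge 1$. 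If $d+1=1$ we need one extra step, and we simply demand $(d+2)$-fold reflection, which is harmless since the hypothesis supplies all $k$.
\item[(b)] For all $w\in L_\theta\cap\P(\N)$ and parameters, $j'(T[w])$ is defined, lies in $\P(\N)$, and contains $w$, both at $\theta_0$ and at $\theta_1$.
\end{itemize}
Definedness in (b) comes from the induction hypothesis $(\ast)_d$, applied to the chain $\theta_0<\cdots<\theta_d$ and to the shifted chain $\theta_1<\cdots<\theta_{d+1}$. Parameters from $L_{\theta_0}$ lie in $L_{\theta_1}$, and by Observation \ref{observationmonotone} the value at $\theta_0$ persists to $\theta_1$. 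The conditions $\subseteq\P(\N)$ and $\supseteq w$ are part of what rule (6b) presupposes. I read (6b) as applying only to terms that satisfy them, or equivalently we replace $T(w)$ by $(w\cup T(w))\cap\N$, which is again an $\NMID_d$ term. The Proposition then gives $j'_{\theta_0}(T^\infty_{[\xx]}[w_0])\neq U$.

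The main obstacle is bookkeeping the chains. The Proposition is stated with $\theta$ satisfying $f_\theta(\tau)=\theta$, whereas nested (6b) needs definedness of the inner term at both $\theta$ and $\tau$. Each level of nesting therefore consumes one more link of the chain, and the top link must itself be reflecting. A further subtlety is the inner formulas defining $T^\alpha$ inside the proof of the Proposition, namely $H$ and the minimizations: we must check these are interpreted at $\theta_0$ using only definedness of $T$ on $L_{\theta_0}$, which the proof of Proposition \ref{propositionTxalpha} already handles. Finally, when $w_0$ or $\xx$ are themselves given by (6b)-containing subterms, the inner structural induction ensures they are already defined elements of $L_{\theta_0}$ before the outer application.
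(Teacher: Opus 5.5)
Your proposal is correct and is essentially the paper's proof. Both argue by induction on the (6b)-nesting depth along a chain with $f_{\theta_i}(\theta_{i+1})=\theta_i$, applying Proposition \ref{propositionTxalpha} to consecutive members $(\theta,\tau)=(\theta_j,\theta_{j+1})$, with the inductive hypothesis on the chain and on the shifted chain giving definedness of $T$ at both. Your remark that $\tau$ must itself be reflecting is a real point that the paper's own induction, which uses only a $k$-fold chain for depth $k$, does not account for; build the extra link into $(\ast)_d$ uniformly (depth $d$ uses a $(d+1)$-fold chain), because each shifted-chain application of the hypothesis needs its own top link to be reflecting, not only in the case $d+1=1$.
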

\begin{proof}
%We need the following proposition: There exist $\eta_k$ increasing to $\eta = f_z(\tau) < \tau$ such that $f_{z_k}(\eta) = \eta_k$. Suppose $T$ is such that for all $w\in L_\eta \cap \P(\N)$, we have $j'_\eta(T[w]) \in \P(\N)$. Fix $w\in L_\eta\cap \P(\N)$. Then $w\in L_{\eta_k}$ for some $k$. It follows that $j'_\eta(T^\infty[w]) \in \P(\N)$.
Let $k$ be the depth of $P,A,X$, i.e. the maximum number of nestings of applications of rule (6b). Let $\eta$ be a $k$-fold reflecting ordinal large enough so that $\xx \in L_{\eta}^M \times \eta^N$. Then there exist $\eta = \eta_0 < \ldots < \eta_k$ such that $\eta_i = f_{\eta_i}(\eta_{i+1})$ for all $i = 0,\ldots,k - 1$. By induction on $i$, for all $j = 0,\ldots,k - i$ we have $\iota_{\eta_j}(P[\xx]), j_{\eta_j}(A[\xx]), j'_{\eta_j}(X[\xx]) \neq U$ for all $P,A,X$ of depth $\leq i$ and $\xx \in L_{\eta}^M \times \eta^N$. Letting $i = k $ and $j = 0$ yields the conclusion.
\end{proof}

\begin{remark*}
We call this language $\NMID$ rather than $\ID$ because the notation $\ID$ is standardly used to refer to the language of monotonic inductive definitions, i.e. the one generated by allowint terms of the form $T_\xx^\infty(w_0)$ for operators $T_\xx$ such that $w_1 \subseteq w_2$ implies $T_\xx(w_1) \subseteq T_\xx(w_2)$.
\end{remark*}

%\begin{definition}[Explain the ``theory of large ordinal axioms'' we are trying to generate\internal]
%Let $A_1$ and $A_2$ be two extensions of $\KP$. We write $A_1 < A_2$ if $A_2$ proves that there exists an ordinal $\alpha$ such that  $f(\alpha) = \alpha$ and $L_\alpha$ is a model of $A_1$.
%\end{definition}
%
%[Question: Does $\KP + \omega_2$ (or a more powerful theory) imply the existence of a model of $\KP + \omega_1$ less than $\theta_\smallemptyset$?\internal]
%
%[Note: $\Sigma_1$-separation holds in $\omega_1$ but it's not clear whether it holds in $\theta_\smallemptyset$\internal]

\section{Proof of Theorem \ref{theorem2}}
\label{sectionreverse}

In this section we prove a converse to Corollary \ref{corollaryID}:

%for all $T$, there exists an ordinal $|T|$ and a function $|T|\ni \alpha \mapsto T^\alpha$ satisfying the inductive definition. This is consistent with an entirely natural-numbers-based way of looking at things, because we can embed the theory of ordinals into the theory of natural numbers -- in particular, it is equivalent to writing $T^\infty(\PP)$, $\prec$ as in the other file\internal

\begin{theorem}
\label{theorem2}
Suppose that for all $P,A,X$ in $\NMID$, there exist arbitrarily large $\eta$ such that $j_\eta(A) \neq U$. Then for all $k\in\N$, there exist arbitrarily large $k$-fold reflecting ordinals.
%If $\alpha = \alpha_0$ is the smallest [or do we need a tower of smallness?\internal] ordinal such that there exist $\alpha_0 < \alpha_1 < \ldots < \alpha_k$ such that $\alpha_i = f_{\{\alpha_0,\ldots,\alpha_{i - 1}\}}(\alpha_{i + 1})$ for all $i = 0,\ldots,k - 1$, then the truth predicate of $L_\alpha$ is describable in $\NMID$, i.e. there is a term $T_k$ such that the set of true sentences of $L_\alpha$ are equal to $j(T_k)$.
\end{theorem}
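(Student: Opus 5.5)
We argue by induction on $k$, building for each $k$ an $\NMID$ term $A_k$ whose definedness forces $k$-fold reflecting ordinals to exist above any given bound. The case $k=0$ is trivial. The guiding idea is the converse of Proposition \ref{propositionTxalpha}: a non-monotonic inductive definition $T$ on $\P(\N)$ can \emph{search} for a reflecting configuration by coding ordinals as natural numbers. A run of $T^\alpha(w_0)$ adds at least one new natural number at each nontrivial stage. Its closure ordinal is therefore "describable" in the sense of $f_z$, and reading off the stage at which a number enters gives a term describing each ordinal below the closure.

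\emph{Step 1 (coding $f_z$ by a non-monotonic induction).} Given $\beta$ and a finite $z$, we design an operator $T$ (depth one in rule (6b)) whose stages enumerate pairs $(A,\ww)$ of $\GDST$-terms with parameters in $z^*$ together with their current values $j_\eta(A[\ww])$ as $\eta$ grows. At stage $\eta$ the set $T^\eta(w_0)$ will encode, via a pairing of $\N$ with codes of terms, the set of terms $A$ with $j_\eta(A)\neq U$. By Observation \ref{observationmonotone}, once a term is defined it stays defined, so $T(w)\supseteq w$ holds. Because the operator adds codes of terms that \emph{become} defined, and definedness of $\min$-terms is not monotone in $w$, this is genuinely $\NMID$ rather than $\ID$. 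We use Proposition \ref{propositionselfmeta} to express "$A$ is defined at the stage coded by $w$" as a $\GDST$ formula. The closure $T^\infty$ halts at the first $\eta$ at which no new term becomes defined in the next step.

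\emph{Step 2 (the closure yields a reflecting ordinal).} We then show that this halting stage $\eta$ satisfies $f_{\eta}(\tau)=\eta$ for a suitable $\tau>\eta$. Otherwise some term with parameters below $\eta$ would denote $\eta$ at a later stage, and we transport this back to a new code entering at the stage after $\eta$. This transport uses the fact, as in Corollary \ref{corollaryexistsX}, that every ordinal below $\eta$ is named by its entry stage. To get ordinals above an arbitrary bound $\beta$, we take $w_0$ to code $\beta$; by hypothesis, some $\eta$ with $j_\eta$ of the relevant term defined exists.

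\emph{Step 3 (iterating to $k$-fold).} We obtain $k$-fold reflection by nesting the construction $k$ times, i.e. depth $k$ in rule (6b). The inner induction, run relative to the parameter set produced by the outer one, produces $\eta_{i}$ with $f_{\eta_i}(\eta_{i+1})=\eta_i$. This mirrors the reverse-inductive terms $A_i(\beta)$ used in the proof that $L_{\theta_\smallemptyset}$ has arbitrarily large $k$-fold reflecting ordinals.

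The main obstacle is Step 2, in particular verifying that the halting stage of the search is reflecting with respect to parameters from \emph{all} of $\eta^*$, not merely from a finite $z$. We need every parameter below $\eta$ to be itself coded by a natural number entering earlier, so that a description of $\eta$ from parameters yields a parameter-free one. We also need to check that the interpretation $j_\eta$ versus $j_\tau$ bookkeeping matches up at limit stages via Observation \ref{observationcontinuous}.
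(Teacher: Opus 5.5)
There is a genuine gap, and it has two parts.

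First, Step 1 is not actually an $\NMID$ construction. Rule (6b) needs the operator $T_\xx(w)$ to be an $\NMID$ term, and the theorem's hypothesis says nothing about closures of operators outside $\NMID$. Such an operator sees only $w\subseteq\mathbb N$ and its fixed parameters $\xx$. The stage $\eta$ changes from one iteration to the next, so it cannot be passed to $\eval_\iota$: Proposition \ref{propositionselfmeta} supplies $\GDST$ terms that take the actual ordinal $\eta$ as an argument. Everything about stage $\eta$ must therefore be decoded from $w$.

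Before the first stall your $w$ does code the stage, since each $\alpha<\eta$ is the value of a term entering at $\alpha+1$. But passing from that code to the set of terms defined at $\eta+1$ requires an $\NMID$-inductive definition of $\GDST$ truth at a coded stage, and you never supply one. That is the technical core of the paper's proof. A \emph{good} set $\PP$ codes an ordinal $q(\PP)$ as a well-ordered family of variables $\alpha_A$, so every ordinal below $q(\PP)$ has a name. Lemma \ref{lemmagood} then shows that the closure $T^\infty_{\mathcal A}(\PP)$ of the explicit truth-generating operator $T_{\mathcal A}$ is exactly the set of $\GDST$ formulas true in $L_{q(\PP)}$ under this naming. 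Some such lemma, nested inside your operator, is needed before Step 1 exists.

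Second, your halting criterion only tests describability at stage $\eta+1$. For $k=1$ this can be made to work. Granted a starting code naming everything up to $\beta$, the first stall is the least $\eta$ above the start that is not named at $\eta+1$. All smaller ordinals are named, and your transport with $\tau=\eta+1$ goes through.

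It cannot give $k$-fold reflection for $k\ge 2$, and Step 3 does not say how it would. If $\eta_0$ is the halting stage of such a search, the phrase ``the least $\eta>\beta$ not named at $\eta+1$'' describes $\eta_0$ at every stage large enough to evaluate $\GDST$ up to $\eta_0+1$. For $k\ge 2$, $\eta_1$ is itself reflecting, hence admissible by Proposition \ref{propositionadmissible}, and so $\eta_1$ should be such a stage. Thus $f_{\eta_0}(\eta_1)>\eta_0$. So you cannot first obtain $\eta_0$ and then find $\eta_1$ by an inner induction relative to it; the test against the far stage must be built into the outer operator.

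The paper does exactly this. On a good set coding $q$, $T_i$ adds a new top variable ($R_i$). It runs the inner closure $T^\infty_{i+1}$ from there to reach the next-level stage ($S_i$), and runs the truth closure at that stage ($S_i'$). It then extends $q$ to $q+1$ exactly when $q$ is named there by a term in the variables below $q$. Each step of the level-$i$ induction thus handles one candidate for $\eta_i$, with a whole inner search inside it. The closure stops at the least candidate $\alpha$ with $f_\alpha(A_{i+1}(\alpha))=\alpha$, i.e.\ at $A_i$ of Lemma \ref{lemmaAi}. The $\NMID$ hypothesis is used only to ensure that these nested closures exist.
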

\begin{proof}
Let $\AA$ be a countable set, and let $\FF_\GDST(\AA)$ denote the class of formulas of $\GDST$ with free ordinal variables of the form $\alpha_A$, where $A\in \AA$. We define a map $T_\AA:\P(\FF_\GDST(\AA)) \to \P(\FF_\GDST(\AA))$ as follows: If $\PP \subseteq \FF_\GDST(\AA)$, then
\begin{align*}
T_\AA(\PP) = \PP
&\cup \{\LQ X\in Y\RQ : \LQ X = Z\RQ, \; \LQ Z\in Y\RQ \in \PP\}\\
&\cup \{\LQ X \notin Y\RQ : \LQ \forall x\in Y \;\; x\neq X\RQ \in \PP\}\\
&\cup \{\LQ X = Y\RQ : \LQ \forall z\in X \;\; z\in Y\RQ, \LQ \forall z\in Y \;\; z\in X\RQ \in \PP\}\\
&\cup \{\LQ X \neq Y\RQ, \LQ Y \neq X\RQ : \LQ Z\in X\wedge Z\notin Y\RQ \in \PP\}\\
&\cup \{\LQ P\wedge Q\RQ : \LQ P\RQ, \LQ Q\RQ \in \PP\}\\
&\cup \{\LQ P\vee Q\RQ : \LQ P\RQ\in \PP \text{ or } \LQ Q\RQ\in\PP\}\\
&\cup \{\LQ \exists x\in X \;\; P(x)\RQ : \LQ Y\in X\RQ, \LQ P(Y)\RQ \in \PP\}\\
&\cup \{\LQ \{x\in X : P(x)\}\downarrow\RQ : \LQ P(Y)\RQ, \LQ \forall x\in X \;\; P(x)\downarrow\RQ \in \PP\}\\
&\cup \{\LQ Y\in \{x\in X : P(x)\}\RQ : \LQ Y\in X\RQ, \LQ P(Y)\RQ, \LQ \forall x\in X \;\; P(x)\downarrow\RQ \in \PP\}\\
%&\cup \{\LQ \forall x\in \{y\in X : P(y)\} \;\; Q(x)\RQ : \LQ \{y\in X : P(y)\}\downarrow\RQ\in\PP, \LQ \forall x\in X \;\; \neg P(x) \vee Q(x)\RQ\in\PP\}\\
%&\cup \{\LQ \forall x\in L_A \;\; P(x)\RQ : \LQ L_A\downarrow\RQ\in\PP, \;\;\forall X\in\TT_\GDST(\AA) \;\; \LQ X\in L_A\RQ\in \PP \Rightarrow \LQ P(X)\RQ\in\PP\}\\
&\cup \{\LQ L_A\downarrow\RQ : \forall Y \in \LL_{A,\PP} \;\; \LQ Y\downarrow\RQ \in\PP\}\\
&\cup \{ \LQ X \in L_A\RQ : X \in \LL_{A,\PP} , \;\; \forall Y \in \LL_{A,\PP} \;\; \LQ Y\downarrow\RQ \in\PP\}\\
\text{ where } \LL_{A,\PP} &= \{\LQ\{x\in L_B : P(x,Y_1,\ldots,Y_n,L_B)\}\RQ : \LQ B < A\RQ \in \PP, \;\; P\in \Delta_0(\ST), \LQ Y_j\in L_B\RQ \in \PP \;\;\forall j = 1,\ldots,n\}\\
&\cup \{\LQ \min\{\alpha : P(\alpha)\}\downarrow\RQ: \LQ P(A)\RQ, \LQ \forall \beta < A \;\;\neg P(\beta)\RQ \in \PP\}\\
&\cup \{\LQ B < \min\{\alpha : P(\alpha)\}\RQ: \LQ B < A\RQ, \; \LQ P(A)\RQ, \; \LQ \forall \beta < A \;\;\neg P(\beta)\RQ \in \PP\}\\
&\cup \{\LQ \forall x\in X \;\; P(x)\RQ : \LQ X\downarrow\RQ\in\PP, \;\;\forall Y\in\TT'_\GDST(\AA) \;\; \LQ Y\in X\RQ\in \PP \Rightarrow \LQ P(Y)\RQ\in\PP\}
%&\cup \{\LQ A <^* \alpha_i \RQ : A\in \TT_\GDST(0,i), \;\;\forall B \;\; \LQ B < A \RQ \in \PP \Rightarrow \LQ B <^* \alpha_i\RQ \in \PP\}
\end{align*}
%Here $\Delta_0(\ST)$ refers to the set of $\Delta_0$ formulas of $\ST$.

\begin{definition}
A set $\PP$ will be called \emph{good} if there exist an ordinal $q = q(\PP)$, a set $\AA = \AA_\PP$, and a function $g = g_\PP : \AA_\PP \twoheadrightarrow q(\PP)$ such that
\begin{equation}
\label{PPf}
\PP \subseteq \{\LQ \alpha_A\downarrow\RQ : A\in \AA\} \cup \{\LQ \alpha_B < \alpha_A\RQ : A,B\in \AA, g(B) < g(A)\}
\end{equation}
and
\begin{equation}
\label{frec}
\LQ \alpha_A\downarrow\RQ \in\PP,\;\; g(A) = \{g(B) : \LQ \alpha_B < \alpha_A\RQ \in \PP\} \text{ for all } A\in\AA.
\end{equation}
Note that if $\PP$ is good, then $q(\PP)$, $\AA_\PP$, and $g_\PP$ are all uniquely defined.
\end{definition}

\begin{lemma}
\label{lemmagood}
If $\PP$ is good, then $T_\AA^\infty(\PP) = \{P\in\FF_\GDST(\AA_\PP) : \iota_{q(\PP)}(P[g]) = \top\}$.
\end{lemma}
\begin{proof}
One checks by induction that for all ordinals $\gamma$,
\begin{itemize}
\item[(1)] $T_\AA^\gamma(\PP) \subseteq \{P\in\FF_\GDST(\AA) : \iota_{q(\PP)}(P[g]) = \top\}$ and
\item[(2)] $j'_{q(\PP)}(X[g]) = \{j'_{q(\PP)}(Y[g]) : \LQ Y \in X \RQ \in T_\AA^\gamma(\PP)\}$ for all $X\in \TT'_\GDST(\AA)$ such that $\LQ X\downarrow\RQ \in T_\AA^\gamma(\PP)$.
\end{itemize}
It follows that both of these relations hold with $\gamma$ replaced by $\infty$. The next step is to prove by set induction on $x = j'_{q(\PP)}(X[g])$ that for all $Y\in \TT'_\GDST(\AA)$ with $j'_{q(\PP)}(Y[g]) \in x$ (resp. $j'_{q(\PP)}(Y[g]) = x$), we have $\LQ Y \in X\RQ \in T_\AA^\infty(\PP)$ (resp. $\LQ Y = X\RQ \in T_\AA^\infty(\PP)$). Finally, one proves by induction on the depth of $P\in \FF_\GDST(N,0)$ and $X_1,\ldots,X_N \in \TT'_\GDST(\AA)$ that $\iota_{q(\PP)}(P(X_1[g],\ldots,X_N[g])) = \top$ implies $P(X_1,\ldots,X_N) \in T_\AA^\infty(\PP)$.
\end{proof}

Now let $T_{k + 1} = \id$, and for each $i = k,\ldots,0$ and good $\PP$ let
\begin{align*}
R_i(\PP) &= \PP \cup \{\LQ \alpha_{\eta_i}\downarrow\RQ\} \cup \{\LQ A < \alpha_{\eta_i}\RQ : \LQ A \downarrow\RQ \in \PP\}\\
S_i(\PP) &= T_{i + 1}^\infty(R_i(\PP))\\
S_i'(\PP) &= T_{\AA_{S_i(\PP)}}^\infty(S_i(\PP))\\
T_i(\PP) &= \PP\cup \{ \LQ \alpha_A\downarrow\RQ : \LQ A\downarrow\RQ \in S_i'(\PP) \cap \FF_\GDST(\AA_\PP), \; \LQ A = \alpha_i\RQ \in S_i'(\PP)\}\\
&\hspace{0.295in} \cup\{\LQ \alpha_B < \alpha_A\RQ : \LQ \alpha_B < A\RQ \in S_i'(\PP) \cap \FF_\GDST(\AA_\PP), \LQ A = \alpha_i\RQ \in S_i'(\PP)\}.
\end{align*}
%where $\pi(A)$ is the result of replacing all occurrences of terms of the form $\alpha_B$ in $A$ by corresponding terms $B$, yielding an element of $\TT_\GDST(\AA_\PP)$.

%\begin{claim}
%If $\PP$ is $i$-good with respect to $\bfalpha$, then so is $T_i(\PP)$.
%\end{claim}
%\begin{proof}
%We proceed by reverse induction. First we show that the claim is true for $i = k$. [Continue\internal]
%%
%If $\PP$ is $i$-good with respect to $\bfalpha$, then $\PP \cup \{\LQ \alpha_i\downarrow\RQ\} \cup \{\LQ A < \alpha_i\RQ : \LQ A \downarrow\RQ \in \PP\}$ is $(i + 1)$-good with respect to $(\bfalpha,\alpha_i)$ where $\alpha_i = g_\bfalpha(\AA)$. By our induction hypothesis, it follows that $S_i(\PP)$ is $(i + 1)$-good with respect to $(\bfalpha,\alpha_i)$. From the definition of $T_i$ it follows that $T_i(\PP)$ is $i$-good with respect to $\bfalpha$.
%\end{proof}

\begin{lemma}
\label{lemmaAi}
Let $\PP$ be a good set, and let
\begin{align*}
A_k(\alpha_{k - 1}) &= \alpha_{k - 1} + 1\\
A_i(\alpha_{i - 1}) &= \min\{ \alpha > \alpha_{i - 1} : F_\alpha(A_{i + 1}(\alpha)) = \alpha\}
\end{align*}
where $F_\alpha(\beta)$ is a term of $\GDST$ representing $f_\alpha(\beta)$. Then there exists $\eta$ large enough so that for all $j = i,\ldots,k$ we have $\alpha_j := j_\eta(A_j(\cdots(A_i[\alpha_{i - 1}]))) \neq U$. Moreover,
\[
q(T_i(\PP)) = \begin{cases}
q(\PP) + 1 & \text{ if $i \leq k$ and } q(\PP) < f_{q(\PP)}(j_\eta(A_i[q(\PP)]))\\
q(\PP) & \text{ otherwise}.
\end{cases}
\]
\end{lemma}
\begin{proof}
We inductively compute the $q$-value of the sets in question, while proving that the sets are good. Suppose $\PP$ is good. Then obviously $R_i(\PP)$ is good, $q(R_i(\PP)) = q(\PP) + 1$, and $g_{R_i(\PP)}(\LQ \eta_i\RQ) = q(\PP)$.

Next, % if $i = k$ then $S_k(\PP)$ is good and $q(S_k(\PP)) = q(\PP) + 1 = j_\eta(A_k[q(\PP)])$, where $\eta > q(\PP) + 1$.
 by the inductive hypothesis, for $i \leq k$ $S_i(\PP)$ is good and satisfies
\begin{align*}
q(S_i(\PP)) &= q(\PP) + 1 + \min\{\gamma : q(T_{i + 1}^\gamma(R_i(\PP))) = q(T_{i + 1}^{\gamma + 1}(R_i(\PP)))\}\\
&= q(\PP) + 1 + \min\{\gamma : i = k \text{ or } q(T_{i + 1}^\gamma(R_i(\PP))) = f_{q(T_{i + 1}^\gamma(R_i(\PP)))}(j_\eta(A_{i + 1}[q(T_{i + 1}^\gamma(R_i(\PP)))]))\}\\
&= j_\eta(\min\{\alpha \geq [q(\PP)] + 1 : i = k \text{ or } F_{\alpha}(A_{i + 1}(\alpha)) = \alpha\})\\
&= j_\eta(A_i[q(\PP)])
\end{align*}
for some $\eta$ sufficiently large, by hypothesis. Thus, for $i \leq k$, $T_i(\PP)$ is good and by Lemma \ref{lemmagood} satisfies
\begin{align*}
q(T_i(\PP)) &= \begin{cases}
q(\PP) + 1 & \text{ if there exists } A\in \TT_\GDST(\AA_\PP) \text{ such that } j_{q(S_i(\PP))}(A[g_\PP]) = q(\PP)\\
q(\PP) & \text{ otherwise}
\end{cases}\\
&= \begin{cases}
q(\PP) + 1 & \text{ if } f_{q(\PP)}(q(S_i(\PP))) \neq q(\PP)\\
q(\PP) & \text{ otherwise}
\end{cases}
\end{align*}
verifying the inductive hypothesis.
\end{proof}

To conclude the proof of Theorem \ref{theorem2}, we observe that for every ordinal $\alpha$, by Lemma \ref{lemmaAi} there exists $\eta$ such that $\beta := j_\eta(A_0[\alpha]) \neq U$. It is clear that $\beta > \alpha$ is a $k$-fold reflecting ordinal.
\end{proof}

\ignore{

\appendix
\section{Transfinite recursion}
\label{appendix1}

\begin{lemma}
\label{lemmarecursion}
Let $T(w,\xx)$ be a set term depending on set variables $w,\xx$. Then there exists a set term $(\alpha,\xx) \mapsto T_\xx^\alpha$ such that for all $z$, for all $\theta = f_z(\tau) < \tau$, and for all $\xx\in L_\theta^M$, the function $h_\xx : \theta \to L_\theta\cup \{U\}$ defined by
\[
h_\xx(\alpha) := j'_\theta(T_{[\xx]}^{[\alpha]})
\]
satisfies
\begin{equation}
\label{hxalpharecursive}
h_\xx\given\alpha \in L_\theta \text{ and } h_\xx(\alpha) = j'_\theta(T[h_\xx\given \alpha,\xx])
\text{ whenever } h_\xx(\beta) \neq U \;\;\forall \beta < \alpha.
\end{equation}
\end{lemma}
\begin{proof}
If $\eta < \theta$, we call a function $h:\eta \to L_\theta$ \emph{$(T,\theta)$-compatible} if
\[
h(\alpha) = j'_\theta(T[h\given \alpha,\xx]) \text{ for all } \alpha < \eta.
\]
It is not hard to show that there is a formula $P(h,\xx)$ such that $\iota_\theta(P[h,\xx]) = \top$ if and only if $h$ is $(T,\theta)$-compatible. We now define
\begin{align*}
H_{\xx,\eta} &:= \{(\alpha,y) : \exists h\in L_\eta \text{ such that } P(h,\xx) \text{ and } h(\alpha) = y\}\\
T_\xx^\alpha &:= H_{\xx,\min\{\eta : H_{\xx,\eta}(\alpha)\downarrow\}}(\alpha).
\end{align*}
(Here $H(\alpha)$ denotes function application, so that $j'_\theta(H[\alpha])$ is the image of $\alpha$ under the function $j'_\theta(H)$, rather than denoting that the $\GDST$ term $H$ has $\alpha$ as a free variable.)

We need to show that $T_\xx^\alpha(w_0)$ satisfies \eqref{hxalpharecursive}. Suppose $h_\xx(\beta) \neq U$ for all $\beta < \alpha$. It follows from the definition of $h_\xx$ that
\[
\iota_\tau(\forall \beta < [\alpha] \;\exists \eta < [\theta] \; H_{[w_0,\xx],\eta}(\beta)\downarrow ) = \top
\]
and by the definition of $f_z$ together with the fact that $\theta = f_z(\tau) < \tau$, there exists $\eta < \theta$ such that
\[
\iota_\tau(\forall \beta < [\alpha] \;\exists \eta' < [\eta] \; H_{[w_0,\xx],\eta'}(\beta)\downarrow ) = \top
\]
and thus
\[
\iota_\theta(\forall \beta < [\alpha] \; H_{[w_0,\xx,\eta]}(\beta)\downarrow )
= \iota_\tau(\forall \beta < [\alpha] \; H_{[w_0,\xx,\eta]}(\beta)\downarrow ) = \top
\]
from which it follows that $h_\xx\given \alpha = j'_\theta(H_{[w_0,\xx,\eta]}) \cap (\alpha\times L_\theta) \in L_\theta$. Now suppose that $y = j'_\theta(T[h_\xx\given\alpha,\xx]) \neq U$. The function
\[
h := ( h_\xx\given\alpha ) \cup \{(\alpha,y)\} \in L_\theta
\]
is $(T,\theta)$-compatible and thus $h \subseteq j'_\theta(H_{[\xx,\eta]}) \subseteq h_\xx$ for some $\eta < \theta$. Thus $h_\xx(\alpha) = y$, which completes the proof.
\end{proof}

\begin{corollary}
\label{corollaryGP}
If $P(\alpha)$ is a formula, then there exists an ordinal term $G_P(\alpha)$ such that for $z$ and $\theta = f_z(\tau) < \tau$, if $\gamma \leq \theta$ is the $\alpha$th ordinal such that $\iota_\theta(P[\gamma]) = \top$, then $\gamma = j_\tau(G_P[\alpha])$.
\end{corollary}
\begin{proof}
Indeed, let
\[
G_P(\alpha) = T(\alpha\ni \beta \mapsto T^\beta)
\]
where $T(h) = \min\{ \alpha : P(\alpha) \text{ and } \alpha\notin \pi_2(h)\}$.
\end{proof}

}% end ignore

\bibliographystyle{amsplain}

\bibliography{bibliography}

\end{document}

\begin{corollary}
\label{corollaryexponent}
For all $\alpha_0 < \theta_\smallemptyset$, there exists $\alpha_0 \leq \alpha < \theta_\smallemptyset$ such that $f_z(\gamma) = \alpha$ for all $\gamma < \alpha^\alpha$.
\end{corollary}
\begin{proof}
Any $\beta < \alpha^\alpha$ can be represented in the form $\sum_{i=1}^k \alpha^{\beta_i} \cdot \gamma_i$ where $\beta_i,\gamma_i < \alpha$, $\beta_i > \beta_{i+1}$, and $\gamma_i > 0$. Letting $H((\bfbeta,\bfgamma),\alpha) = \sum_{i=1}^k \alpha^{\beta_i} \cdot \gamma_i$, we have $g(\alpha) = \alpha^\alpha$ and the conclusion follows.
\end{proof}

\begin{proposition}
There exists admissible $\alpha < \theta_\smallemptyset$ and $S_\alpha = \{\beta < \alpha : f_z(\gamma) = \beta \;\forall \gamma < \beta^\beta\}$ is order-isomorphic to $\alpha$.
\end{proposition}
\begin{proof}
By Corollary \ref{corollaryexponent}, $S_{\theta_\smallemptyset}$ is unbounded in $\theta_\smallemptyset$. Let
\[
P(\alpha) \equiv \text{$S_\alpha$ is unbounded in $\alpha$ and $\alpha$ is admissible}.
\]
Since $\iota_{\theta_\smallemptyset^{\theta_\smallemptyset}}(P,\theta_\smallemptyset) = \top$ and $f_z(\theta_\smallemptyset^{\theta_\smallemptyset}) = \theta_\smallemptyset$, by the definition of $f_z$ we have $\iota_{\theta_\smallemptyset^{\theta_\smallemptyset}}(P[\alpha]) = \top$ for some $\alpha < \theta_\smallemptyset$. It follows that $\alpha$ is admissible and $S_\alpha$ is unbounded in $\alpha$. Letting
\[
P(\beta,\gamma) \equiv \gamma \text{ is the $\beta$th element of $S_\alpha$}
\]
and applying $\Delta_0$-collection shows that $S_\alpha$ is order-isomorphic to $\alpha$.
\end{proof}

\begin{definition}
A \emph{descriptionalist hypercomputation} is a transfinite sequence of sets $(A_\alpha)$ with $A_\alpha \subseteq (\N,\preceq)$, with $\preceq$ an ordinal notation, such that
\begin{itemize}
\item for all $\alpha$, there exists $n\in A_{\alpha + 1}\butnot A_\alpha$
\item for all $\alpha$ and $n\in A_\alpha\butnot A_{\alpha + 1}$, there exists $m \prec n$ such that $m\in A_{\alpha + 1} \butnot A_\alpha$.
\item for $\alpha$ a limit ordinal, $A_\alpha = \limsup_{\beta\to\alpha} A_\beta$.
\end{itemize}
\end{definition}

\begin{proposition}[$\KP+\omega_1$}
Any descriptionalist hypercomputation halts before stage $\theta_\smallemptyset$.
\end{proposition}
\begin{proof}
First we show that the hypercomputation $(A_\alpha)$ halts. To this end, first we will prove by $\preceq$-induction that for all $n\in\N$, the limit $\lim_{\alpha\to\omega_1} [n\in A_\alpha]$ exists. Indeed, suppose $\lim_{\alpha\to\omega_1} [m\in A_\alpha]$ exists for all $m \prec n$. Then since $\omega_1$ is uncountable, there exists $\alpha_0 < \omega_1$ such that $[m\in A_\alpha]$ is constant with respect to $\alpha\in (\alpha_0,\omega_1)$ for all $m \prec n$. From the definition of a descriptionalist hypercomputation, it follows that $n\notin A_\alpha \butnot A_{\alpha + 1}$ for all $\alpha \in (\alpha_0,\omega_1)$. Thus $(\alpha_0,\omega_1) \ni \alpha\mapsto [n\in A_\alpha]$ is monotone increasing and therefore eventually constant. This completes the $\preceq$-induction. Finally, since $\omega_1$ is uncountable there exists $\alpha_0 < \omega_1$ such that $(\alpha_0,\omega_1) \ni \alpha\mapsto [n\in A_\alpha]$ is constant for all $n\in\N$. It follows that $(A_\alpha)$ halts.

To show that $(A_\alpha)$ halts before stage $\theta_\smallemptyset$, let $\sigma < \omega_1$ be the stage at which $(A_\alpha)$ halts. Then $\sigma$ is $\sigma + 1$-describable, since ``the stage at which $(A_\alpha)$ halts'' is a description of $\sigma$. Moreover, for any $\alpha < \sigma$, by the definition of descriptionalist hypercomputation there exists $n_0\in\N$ such that $n_0\in A_{\alpha + 1}\butnot A_\alpha$; then there exist finite sequences $\alpha = \alpha_0 < \alpha_1 < \cdots < \alpha_k = \sigma$ and $n_0 \succ n_1 \succ \cdots \succ n_{k - 1}$ such that $n_i \in A_{\alpha_i + 1} \butnot A_{\alpha_i}$ and $n_i \in A_{\alpha_{i + 1}} \butnot A_{\alpha_{i + 1} + 1}$; $k$ is finite because $\preceq$ is a well-ordering. Then the statement ``there exist finite sequences [etc] [needs a rewrite\interal]'' So $\alpha$ is $\sigma + 1$-describable, and thus $f_\smallemptyset(\sigma + 1) = \sigma + 1$, and thus $\sigma + 1 \leq \theta_\smallemptyset$.
\end{proof}